\documentclass[12pt,twoside,a4paper]{article}
\usepackage{amsmath,amsthm, amssymb, amsfonts,latexsym,amscd}
\usepackage[dvips]{graphicx}
\newtheorem{theorem}{Theorem}[section]
\newtheorem{corollary}[theorem]{Corollary}
\newtheorem{lemma}[theorem]{Lemma}
\newtheorem{proposition}[theorem]{Proposition}

\setlength{\textwidth}{6.4in}
\setlength{\textheight}{9.7in}
\setlength{\topmargin}{-0.5in}
\setlength{\headheight}{0pt}
\setlength{\oddsidemargin}{0pt}
\setlength{\evensidemargin}{0pt}

\begin{document}

\title{\bf Vertex connectivity of the power graph of a finite cyclic group II}
\author{Sriparna Chattopadhyay\footnote{Supported by SERB NPDF scheme (File No. PDF/2017/000908), Department of Science and Technology, Government of India} \and Kamal Lochan Patra \and Binod Kumar Sahoo}
\date{}
\maketitle

\begin{abstract}
The power graph $\mathcal{P}(G)$ of a given finite group $G$ is the simple undirected graph whose vertices are the elements of $G$, in which two distinct vertices are adjacent if and only if one of them can be obtained as an integral power of the other. The vertex connectivity $\kappa(\mathcal{P}(G))$ of $\mathcal{P}(G)$ is the minimum number of vertices which need to be removed from $G$ so that the induced subgraph of $\mathcal{P}(G)$ on the remaining vertices is disconnected or has only one vertex. For a positive integer $n$, let $C_n$ be the cyclic group of order $n$. Suppose that the prime power decomposition of $n$ is given by $n =p_1^{n_1}p_2^{n_2}\cdots p_r^{n_r}$, where $r\geq 1$, $n_1,n_2,\ldots, n_r$ are positive integers and $p_1,p_2,\ldots,p_r$ are prime numbers with $p_1<p_2<\cdots <p_r$. The vertex connectivity $\kappa(\mathcal{P}(C_n))$ of $\mathcal{P}(C_n)$ is known for $r\leq 3$, see \cite{panda, cps}. In this paper, for $r\geq 4$, we give a new upper bound for $\kappa(\mathcal{P}(C_n))$ and determine $\kappa(\mathcal{P}(C_n))$ when $n_r\geq 2$. We also determine $\kappa(\mathcal{P}(C_n))$ when $n$ is a product of distinct prime numbers.\\

\noindent {\bf Key words:} Power graph, Vertex connectivity, Cyclic group, Euler's totient function \\
{\bf AMS subject classification.} 05C25, 05C40, 20K99
\end{abstract}

\section{Introduction}

Let $\Gamma$ be a simple graph with vertex set $V.$ A subset $X$ of $V$ is called a (vertex) {\it cut-set} of $\Gamma$ if the induced subgraph of $\Gamma$ with vertex set $V\setminus X$ is  disconnected. A cut-set $X$ of $\Gamma$ is called a {\it minimal cut-set} if $X\setminus \{x\}$ is not a cut-set of $\Gamma$ for any $x\in X$. If $X$ is a minimal cut-set of $\Gamma$, then any proper subset of $X$ is not a cut-set of $\Gamma$. A cut-set $X$ of $\Gamma$ is called a {\it minimum cut-set} if $|X|\leq |Y|$ for any cut-set $Y$ of $\Gamma$. Clearly, every minimum cut-set of $\Gamma$ is also a minimal cut-set. The {\it vertex connectivity} of $\Gamma$, denoted by $\kappa(\Gamma)$, is the minimum number of vertices which need to be removed from $V$ so that the induced subgraph of $\Gamma$ on the remaining vertices is disconnected or has only one vertex. The latter case arises only when $\Gamma$ is a complete graph. If $\Gamma$ is not a complete graph and $X$ is a minimum cut-set of $\Gamma$, then $\kappa(\Gamma)=|X|$. A {\it separation} of $\Gamma$ is a pair $(A,B)$, where $A,B$ are disjoint non-empty subsets of $V$ whose union is $V$ and there is no edge of $\Gamma$ containing vertices from both $A$ and $B$. Thus, $\Gamma$ is disconnected if and only if there exists a separation of it.

\subsection{Power graph}

The notion of directed power graph of a group was introduced in \cite{ker1}, which was further extended to semigroups in \cite{ker1-1, ker2}. Then the notion of undirected power graph of a semigroup, in particular, of a group was defined in \cite{ivy}. Many researchers have investigated both the directed and undirected power graphs of groups from different view points. More on these graphs can be found in the survey paper \cite{survey} and the references therein.

Let $G$ be a finite group. The {\it power graph} $\mathcal{P}(G)$ of $G$ is the simple undirected graph with vertex set $G$, in which two distinct vertices are adjacent if and only if one of them can be obtained as an integral  power of the other. Thus two distinct vertices $x,y\in G$ are adjacent in $\mathcal{P}(G)$ if and only if $x\in\langle y\rangle$ or $y\in\langle x\rangle$. Since $G$ is finite, the identity element of $G$ is adjacent to all other vertices and so $\mathcal{P}(G)$ is connected.

The automorphism group of the power graph of a finite group was described in \cite[Theorem 2.2]{FMW-1}. Clearly, if two finite groups are isomorphic, then they have isomorphic power graphs. The converse statement does not hold in general: two non-isomorphic finite $p$-groups of the same order and each of exponent $p$ have isomorphic power graphs. However, by \cite[Corollary 3]{cam}, two finite groups with isomorphic power graphs have the same number of elements of each order. It was proved in \cite[Theorem 1]{cam-1} that two finite abelian groups are isomorphic if their power graphs are isomorphic. If $G$ and $H$ are two finite groups with isomorphic power graphs, where $H$ is a simple group, a cyclic group, a symmetric group, a dihedral group or a generalized quaternion (dicyclic) group, then $G$ is isomorphic to $H$ \cite[Theorem 15]{mir}.
The power graph of a finite group is complete if and only if the group is cyclic of prime power order \cite[Theorem 2.12]{ivy}. It was proved in \cite[Theorem 1.3]{cur} and \cite[Corollary 3.4]{cur-1} that, among all finite groups of a given order, the cyclic group of that order has the maximum number of edges and has the largest clique in its power graph. By \cite[Theorem 5]{dooser} and \cite[Corollary 2.5]{FMW}, the power graph of a finite group is perfect, in particular, the clique number and the chromatic number coincide. Explicit formula for the clique number of the power graph of a finite cyclic group is given in \cite[Theorem 2]{mir} and \cite[Theorem 7]{dooser}.

For a subset $A$ of $G$, we denote by $\mathcal{P}(A)$ the induced subgraph of $\mathcal{P}(G)$ with vertex set $A$. The subgraph $\mathcal{P}^{\ast}(G)=\mathcal{P}(G\setminus\{1\})$ of $\mathcal{P}(G)$ is called the {\it proper power graph} of $G$. The finite groups for which the proper power graph is strongly regular (respectively, planar, bipartite) are characterized in \cite{mog}. In the same paper, the authors proved connectedness of the proper power graph of certain groups. For the dihedral group $D_{2n}$ of order $2n$, the identity element is a cut-vertex of $\mathcal{P}(D_{2n})$ and so $\mathcal{P}^{\ast}(D_{2n})$ is disconnected. If $G$ is one of the groups $PGL(2,p^n)$ ($p$ an odd prime), $PSL(2,p^n)$ ($p$ prime), or a Suzuki group $Sz(2^{2n+1})$, then $\mathcal{P}^{\ast}(G)$ is disconnected \cite[Theorems 3.5--3.7]{doos}. In \cite[Section 4]{doos}, the authors proved that $\mathcal{P}^{\ast}(S_n)$ and $\mathcal{P}^{\ast}(A_n)$ are disconnected for many values of $n$, where $S_n, A_n$ are the symmetric and alternating groups respectively. The number of connected components of $\mathcal{P}^{\ast}(S_n)$ and $\mathcal{P}^{\ast}(A_n)$ are studied in \cite{BIS, BIS-1}.

\subsection{Vertex connectivity}

For a given finite group, determining the vertex connectivity of its power graph is an interesting problem. Clearly, every cut-set of the power graph contains the identity element of the group. The vertex connectivity of the power graph is $1$ if and only if the group is of order $2$ or its proper power graph is disconnected. We recall a few results on the vertex connectivity of the power graph of finite $p$-groups and cyclic groups. If $G$ is a cyclic $p$-group, then $\mathcal{P}(G)$ is a complete graph and so $\kappa\left(\mathcal{P}\left(G\right)\right)=|G|-1$. If $G$ is a generalized quaternion $2$-group (in general, a dicyclic group), then the set consisting of the identity element and the unique involution of $G$ is a minimum cut-set of $\mathcal{P}(G)$ and so $\kappa\left(\mathcal{P}\left(G\right)\right)=2$ \cite[Theorem 7]{sri}. If $G$ is a finite $p$-group, then $\mathcal{P}^{\ast}(G)$ is connected if and only if $G$ is either cyclic or a generalized quaternion $2$-group by \cite[Corollary 4.1]{mog} (also see \cite[Theorem 2.6 (1)]{doos}). In particular, $\kappa(\mathcal{P}(G))=1$ if $G$ is a finite non-cyclic abelian $p$-group, in this case the number of connected components of $\mathcal{P}^{\ast}(G)$ is obtained in \cite[Theorem 3.3]{panda}.

For a given positive integer $n$, let $C_n$ denote the finite cyclic group of order $n$. The number of generators of $C_n$ is $\phi(n)$, where $\phi$ is the Euler's totient function. We assume that $n$ is divisible by at least two distinct primes. The identity element and the generators of $C_n$ are adjacent to all other vertices of $\mathcal{P}(C_n)$. So every cut-set of $\mathcal{P}(C_n)$ must contain these elements, giving
$\kappa(\mathcal{P}(C_n))\geq \phi(n) +1$. Further, equality holds if and only if $n$ is a product of two distinct primes, see \cite[Proposition 2.5]{panda} and \cite[Lemma 2.5]{cps}.
For $n=p_1^{n_1}p_2^{n_2}$, where $p_1, p_2$ are distinct primes and $n_1,n_2$ are positive integers, it was proved in \cite[Theorem 2.7]{CP-2015} that $\kappa(\mathcal{P}(C_n))\leq \phi(n)+p_1^{n_1 -1}p_2^{n_2 -1}$. If $n=p_1p_2p_3$ is a product of three primes with $p_1<p_2<p_3$, then $\kappa(\mathcal{P}(C_n))\leq \phi (n)+ p_1+p_2 - 1$ by \cite[Theorem 2.9]{CP-2015}. These results were generalized in \cite{cps} and \cite{panda}.

Let $n=p_1^{n_1}p_2^{n_2}\cdots p_r^{n_r}$, where $r\geq 2$, $n_1,n_2,\ldots, n_r$ are positive integers and $p_1,p_2,\ldots,p_r$ are prime numbers with $p_1<p_2<\cdots <p_r$. Consider the integers $\alpha(n)$ and $\beta(n)$, where
\begin{align*}
\alpha(n)& :=\phi(n) + \frac{n}{p_1p_2 \cdots p_r}\times \left[p_1p_2\cdots p_{r-1}-\phi\left(p_1p_2\cdots p_{r-1}\right)\right],\\
\beta(n) & :=\phi(n) + \frac{n}{p_1p_2\cdots p_r}\times \frac{1}{p_r^{n_r-1}}\left[p_1p_2\cdots p_{r-1} +\phi\left(p_1p_2\cdots p_{r-1}\right)\left( p_r^{n_r-1}-2\right) \right].
\end{align*}
By \cite[Theorems 2.23, 2.35, 2.36]{panda},
\begin{equation}\label{bound-1}
\kappa(\mathcal{P}(C_n))\leq \alpha(n),
\end{equation}
\begin{equation}\label{bound-2}
\kappa(\mathcal{P}(C_n))\leq \beta(n)
\end{equation}
and the following hold:
\begin{enumerate}
\item[(i)] $\alpha(n) < \beta(n)$ if and only if $n_r\geq 2$ and $2\phi(p_1p_2\cdots p_{r-1}) > p_1p_2\cdots p_{r-1}$,
\item[(ii)] $\alpha(n) > \beta(n)$ if and only if $n_r\geq 2$ and $2\phi(p_1p_2\cdots p_{r-1}) < p_1p_2\cdots p_{r-1}$,
\item[(iii)] $\alpha(n)=\beta(n)$ if and only if $n_r=1$ or $(r,p_1)=(2,2)$.
\end{enumerate}

The authors of the present paper also independently obtained both the upper bounds (\ref{bound-1}) and (\ref{bound-2}) in \cite{cps}. Moreover, it was proved that if $2\phi(p_1p_2\cdots p_{r-1}) \geq p_1p_2\cdots p_{r-1}$, then the bound (\ref{bound-1}) is sharp, that is, $\kappa(\mathcal{P}(C_n))= \alpha(n)$ \cite[Theorem 1.3(i),(iii)]{cps}. As a consequence, if $p_1\geq r$, then $\kappa(\mathcal{P}(C_n))= \alpha(n)$ \cite[Corollary 1.4]{cps}. In particular, if $n=p_1^{n_1}p_2^{n_2}$, then $\kappa(\mathcal{P}(C_n))=\phi \left(p_{1}^{n_{1}}p_{2}^{n_2}\right)+p_{1}^{n_{1}-1}p_{2}^{n_2-1}$, also see \cite[Theorem 2.38]{panda}. It was shown in \cite[Theorem 1.5]{cps} that the bound (\ref{bound-2}) is sharp, that is, $\kappa(\mathcal{P}(C_n))= \beta(n)$ for integers $n=p_1^{n_1}p_2^{n_2}p_3^{n_3}$ with $2\phi(p_1p_2) < p_1p_2$ (so necessarily $p_1=2$). However, by \cite[Example 3.4]{cps}, equality may not hold in (\ref{bound-2}) in general if $2\phi(p_1p_2\cdots p_{r-1}) < p_1p_2\cdots p_{r-1}$. In fact, the present paper is an outcome of the study of the behaviour of this example.

In view of the results mentioned in the previous paragraphs, the vertex connectivity of $\mathcal{P}(C_n)$ is completely determined for $r\leq 3$. Define the following integer:
$$\gamma(n):= \phi(n)+\frac{n}{p_1\cdots p_r}\times \left[\phi(p_1\cdots p_{r-1})+ \phi(p_1\cdots p_{r-2}p_r)+p_1\cdots p_{r-2}-\phi(p_1\cdots p_{r-2})\right].$$
We prove the following three results in this paper.

\begin{theorem}\label{res-1}
Let $n=p_1^{n_1}p_2^{n_2}\cdots p_r^{n_r}$, where $r\geq 3$, $n_1,n_2,\ldots, n_r$ are positive integers and $p_1,p_2,\ldots,p_r$ are prime numbers with $p_1<p_2<\cdots <p_r$.
Then $\kappa(\mathcal{P}(C_n))\leq \gamma(n)$.
\end{theorem}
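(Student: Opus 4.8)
The goal is to exhibit an explicit cut-set of $\mathcal{P}(C_n)$ of size $\gamma(n)$; then the bound $\kappa(\mathcal{P}(C_n))\leq\gamma(n)$ follows immediately. I would work inside the cyclic group $C_n=\langle g\rangle$ and use the standard fact that for each divisor $d$ of $n$ there is a unique subgroup of order $d$, consisting of the elements of order dividing $d$, and that the elements of order exactly $d$ number $\phi(d)$. Two elements $x,y$ are adjacent in $\mathcal{P}(C_n)$ exactly when the order of one divides the order of the other \emph{and} they lie in a common cyclic subgroup — but in a cyclic group any two elements lie in a common cyclic subgroup, so adjacency is governed purely by divisibility of orders together with the containment $\langle x\rangle\subseteq\langle y\rangle$ or vice versa. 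The cut-set should consist of: all elements whose order is divisible by $p_r$ \emph{except} those lying in certain "pure" parts, together with enough low-order elements to separate two surviving pieces. Concretely, following the pattern of $\alpha(n)$ and $\beta(n)$ from \cite{panda, cps}, I expect the separation to isolate the set $A$ of generators of the subgroup of order $n/p_r^{\,n_r}\cdot p_r = p_1^{n_1}\cdots p_{r-1}^{n_{r-1}}p_r$ (or a related "mixed" layer) from the rest, and the coefficient $\phi(p_1\cdots p_{r-1})+\phi(p_1\cdots p_{r-2}p_r)+p_1\cdots p_{r-2}-\phi(p_1\cdots p_{r-2})$ multiplying $n/(p_1\cdots p_r)$ counts exactly the removed elements of the relevant small orders.

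First I would fix the notation: for a divisor $d\mid n$ write $G_d$ for the unique subgroup of order $d$ and $\mathrm{Gen}(d)$ for its $\phi(d)$ generators, so that $C_n$ is partitioned as $\bigsqcup_{d\mid n}\mathrm{Gen}(d)$. Next I would define the candidate cut-set $S$ as a union of $\mathrm{Gen}(d)$ over a carefully chosen family $\mathcal{D}$ of divisors of $n$, and define two nonempty sets $A, B$ of vertices outside $S$ with $A\cup B = C_n\setminus S$. The key divisibility bookkeeping: an element of order $d$ is adjacent to an element of order $e$ iff $d\mid e$ or $e\mid d$. So to separate $A$ from $B$ I must ensure that there is no divisor chain from an order appearing in $A$ to an order appearing in $B$ that avoids the orders collected in $\mathcal{D}$. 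The natural choice is to let $A$ be (generators of) a single cyclic subgroup of "large mixed" order involving $p_r$, let $B$ be everything of order coprime-to-$p_r$ that is not swallowed, and let $S$ be the "interface": all elements of order divisible by $p_r$ not in $A$, plus the small-order elements sitting below both $A$ and $B$. Then $|S| = |C_n| - |A| - |B|$, and I would compute $|A|$, $|B|$, $|C_n|=n$ in terms of $\phi$ and the primes, and check the arithmetic collapses to $\gamma(n) = \phi(n) + \frac{n}{p_1\cdots p_r}\bigl[\phi(p_1\cdots p_{r-1}) + \phi(p_1\cdots p_{r-2}p_r) + p_1\cdots p_{r-2} - \phi(p_1\cdots p_{r-2})\bigr]$.

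The heart of the argument is verifying that $(A,B)$ is genuinely a separation of $\mathcal{P}(C_n\setminus S)$, i.e.\ no edge runs between $A$ and $B$. For this I would take arbitrary $x\in A$ of order $a$ and $y\in B$ of order $b$, and argue by contradiction: if $x,y$ were adjacent then WLOG $a\mid b$ (the case $b\mid a$ being symmetric), and I would show that every element of order $a$ then actually belongs to $S$, or that the divisibility forces $y$ itself into $S$ or $A$, contradicting $y\in B$. This is where the precise choice of which $p_i$-parts go into $A$ versus $B$ matters: the exponents $n_i$ for $i<r$ must be handled so that going "up" from an order in $B$ toward an order in $A$ necessarily passes through an order whose generators were put into $S$. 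I would also have to separately check that $A$ and $B$ are each nonempty and that removing $S$ does not accidentally leave a complete graph — both routine once the sets are written down, using $r\geq 3$ so that there are enough primes to make $B$ nontrivial.

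The main obstacle I anticipate is \emph{discovering the correct sets $A$, $B$, $S$} so that the counting yields exactly $\gamma(n)$ and not merely $\leq$ something larger; the formula for $\gamma(n)$ has three positive terms inside the bracket ($\phi(p_1\cdots p_{r-1})$, $\phi(p_1\cdots p_{r-2}p_r)$, and $p_1\cdots p_{r-2}-\phi(p_1\cdots p_{r-2})$), which strongly suggests that the complement $C_n\setminus S$ is \emph{not} just two pieces but that $A$ or $B$ itself is a union of layers indexed by which of the first $r-1$ primes divide the order — so the bookkeeping of "which small-order elements must be thrown into $S$ to block all cross edges" is delicate. A secondary technical point is that $\gamma(n)$ should be compared with the existing bounds $\alpha(n),\beta(n)$: the paper only claims $\gamma(n)$ is a \emph{new} upper bound for $r\geq 4$, so after establishing $\kappa(\mathcal{P}(C_n))\leq\gamma(n)$ I would note (but not belabor here) that for suitable $n$ one has $\gamma(n)<\min\{\alpha(n),\beta(n)\}$, which is really a separate arithmetic comparison rather than part of the proof of Theorem~\ref{res-1} itself.
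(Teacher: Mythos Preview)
Your overall framework---construct an explicit cut-set $S$, exhibit a separation $(A,B)$ of $C_n\setminus S$, and compute $|S|=\gamma(n)$---is exactly what the paper does. The gap is that you have not found the right $S$, and your guesses point in the wrong direction. You are trying to separate by the $p_r$-content alone (``$A$ the generators of a single subgroup involving $p_r$'', ``$B$ everything of order coprime to $p_r$''), which is the mechanism behind $\alpha(n)$ and $\beta(n)$; it will not produce the bracket $\phi(p_1\cdots p_{r-1})+\phi(p_1\cdots p_{r-2}p_r)+\bigl[p_1\cdots p_{r-2}-\phi(p_1\cdots p_{r-2})\bigr]$. The presence of \emph{two} $\phi$-terms, one missing $p_r$ and one missing $p_{r-1}$, is the hint you missed: the construction singles out the \emph{pair} $\{p_{r-1},p_r\}$, not just $p_r$.

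Concretely, the paper's cut-set is
\[
X_{r-1,r}=E_n\;\cup\;\bigcup_{j=1}^{n_{r-1}}E_{n/p_{r-1}^{\,j}}\;\cup\;\bigcup_{j=1}^{n_{r}}E_{n/p_{r}^{\,j}}\;\cup\;R,\qquad R=\bigcup_{i=1}^{r-2}S_{n/(p_i p_{r-1}p_r)},
\]
a disjoint union. The separation of the complement is: $A$ consists of all elements whose order has \emph{both} the $p_{r-1}$-exponent $<n_{r-1}$ and the $p_r$-exponent $<n_r$ (whence, being outside $R$, all other exponents are maximal); $B$ is the rest of the complement, namely elements whose order has the $p_{r-1}$-exponent $=n_{r-1}$ or the $p_r$-exponent $=n_r$ (and, being outside the $E$-layers removed, some other exponent is sub-maximal). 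No edge can cross: from $B$ to $A$ is blocked because $A$-orders are full at every $p_i$ with $i\leq r-2$ while $B$-orders are not; from $A$ to $B$ is blocked because $B$-orders are full at $p_{r-1}$ or $p_r$ while $A$-orders are not. The three bracket terms are precisely $(n/p_1\cdots p_r)^{-1}$ times the sizes of $\bigcup_j E_{n/p_r^{\,j}}$, $\bigcup_j E_{n/p_{r-1}^{\,j}}$, and $R$ respectively, and $|E_n|=\phi(n)$ supplies the leading term. Once you write down this $X_{r-1,r}$, the verification that it is a cut-set and the size computation are exactly the routine divisibility and inclusion--exclusion arguments you outlined.
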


\begin{theorem}\label{res-2}
Let $n=p_1^{n_1}p_2^{n_2}\cdots p_r^{n_r}$, where $r\geq 3$, $n_1,n_2,\ldots, n_r$ are positive integers and $p_1,p_2,\ldots,p_r$ are prime numbers with $p_1<p_2<\cdots <p_r$. If $n_r\geq 2$, then
$$\kappa(\mathcal{P}(C_n)) = \min \{\alpha(n), \beta(n)\}.$$
\end{theorem}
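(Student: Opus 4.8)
The inequality $\kappa(\mathcal P(C_n))\le\min\{\alpha(n),\beta(n)\}$ is immediate from the upper bounds (\ref{bound-1}) and (\ref{bound-2}), so the whole content is the reverse inequality. I would first split on the sign of $p_1p_2\cdots p_{r-1}-2\phi(p_1p_2\cdots p_{r-1})$. Since $r\ge 3$ and $n_r\ge 2$, the case $2\phi(p_1\cdots p_{r-1})=p_1\cdots p_{r-1}$ cannot occur (it would force $(r,p_1)=(2,2)$); if $2\phi(p_1\cdots p_{r-1})>p_1\cdots p_{r-1}$, then by item (i) above $\min\{\alpha(n),\beta(n)\}=\alpha(n)$, and the equality $\kappa(\mathcal P(C_n))=\alpha(n)$ is exactly \cite[Theorem 1.3(i),(iii)]{cps}, so nothing new is needed. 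Hence the real task is: assuming $r\ge 3$, $n_r\ge 2$ and $2\phi(p_1\cdots p_{r-1})<p_1\cdots p_{r-1}$ (so that by item (ii) $\beta(n)<\alpha(n)$), prove $\kappa(\mathcal P(C_n))\ge\beta(n)$.

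For this I would pass to the usual divisor description. For $d\mid n$ let $V_d$ be the set of the $\phi(d)$ elements of $C_n$ of order $d$; then each $V_d$ is a clique, the induced subgraph on $V_d\cup V_{d'}$ is complete when $d\mid d'$ or $d'\mid d$, and there is no edge between $V_d$ and $V_{d'}$ otherwise. Fix a minimum cut-set $X$ (it is minimal, and since the elements of a fixed $V_d$ are twins, $X$ is a union of some $V_d$'s with $V_1,V_n\subseteq X$), and group the components of $\mathcal P(C_n)\setminus X$ into two nonempty parts $A,B$; let $\mathcal O_A,\mathcal O_B$ be the sets of orders occurring in $A,B$, which are disjoint and satisfy $d_a\nmid d_b$, $d_b\nmid d_a$ for all $d_a\in\mathcal O_A,\ d_b\in\mathcal O_B$. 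The key elementary observation is that for every such pair both $V_{\mathrm{lcm}(d_a,d_b)}$ and $V_{\gcd(d_a,d_b)}$ lie in $X$. Writing $w(Y):=\sum_{d\in Y}\phi(d)$ and letting $U$ (resp.\ $D$) be the set of common multiples (resp.\ common divisors) of $\mathcal O_A$ and $\mathcal O_B$ among the divisors of $n$, one gets $n\in U$, $1\in D$, $U\cap D=\varnothing$, and therefore $|X|\ge w(U\cup D)=w(U)+w(D)$. A direct computation shows that for the separation realizing the bound (\ref{bound-2})---namely $B=V_m$ with $m=n/p_r^{n_r}$ and $A=\bigcup\{V_d: p_r\mid d,\ m\nmid d\}$---one has $w(U)+w(D)=\beta(n)$ exactly. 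So it suffices to prove the clean combinatorial statement: $w(U)+w(D)\ge\beta(n)$ for every separation $(A,B)$, in the regime $m>2\phi(m)$.

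The heart of the proof is then a structural analysis of which pairs $(\mathcal O_A,\mathcal O_B)$ can actually arise from a separation. I would organize it around the chain $m=mp_r^0\mid mp_r\mid\cdots\mid mp_r^{n_r}=n$ together with the sublattice of divisors of $m$, recording for each of $A$ and $B$ whether its order set meets the set of divisors of $m$, whether it contains $m$ itself, and how far up the $p_r$-chain its multiples reach. In each resulting case one either reads off enough forced divisor-classes in $U\cup D$ to get $w(U)+w(D)\ge\beta(n)$ directly, or one notices that $U\cup D\cup\{1,n\}$ by itself does not yet disconnect $A$ from $B$---typically because a ladder $d\mid dp_i\mid\cdots$ running through the divisors of $m$ still joins the two sides---so that $X$ must contain still more classes; summing these contributions and using $2\phi(p_1\cdots p_{r-1})<p_1\cdots p_{r-1}$ (equivalently $m>2\phi(m)$), which is precisely what makes the asymmetric $\beta$-configuration cheaper both than the symmetric $\alpha$-configuration and than the ``small $A$, small $B$'' configurations, one obtains $w(U)+w(D)\ge\beta(n)$ in every case. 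Combining this with (\ref{bound-2}) gives $\kappa(\mathcal P(C_n))=\beta(n)=\min\{\alpha(n),\beta(n)\}$ in this regime, and with the cited result of \cite{cps} in the complementary regime completes the theorem.

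The main obstacle is exactly this last case analysis: lower-bounding $w(U)+w(D)$ when $A$ and $B$ are both ``spread out'' over the divisors of $m$, where $U$ and $D$ are forced to be large but it is delicate to verify that their combined $\phi$-weight can never fall below $\beta(n)$. The underlying difficulty is that the competing lower bounds are all polynomial expressions in $p_1,\dots,p_r$ and $n_1,\dots,n_r$, and the single arithmetic inequality $2\phi(p_1\cdots p_{r-1})<p_1\cdots p_{r-1}$ must be invoked, in a uniform way, to certify that $\beta(n)$ is the minimum of the whole family.
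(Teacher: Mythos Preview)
Your plan is sound in outline but genuinely different from the paper's argument, and the part you flag as ``the main obstacle'' is exactly the part where the two diverge.

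The paper does \emph{not} split on the sign of $p_1\cdots p_{r-1}-2\phi(p_1\cdots p_{r-1})$ and does not try to lower-bound $w(U)+w(D)$ over all separations. Instead it pivots on a single discrete invariant of a minimum cut-set $X$: the number of top-layer classes $E_{n/p_i}$ ($1\le i\le r$) that lie in $X$. First, using the new upper bound $\kappa\le\gamma(n)$ of Theorem~\ref{res-1}, it shows (Proposition~\ref{two-atmost}) that at most two of the $E_{n/p_i}$ can lie in $X$; then, using $\kappa\le\beta(n)$ together with $n_r\ge 2$, it sharpens this to at most one (Proposition~\ref{one-atmost}). The two remaining cases are then handled separately: if none, a direct separation analysis forces $X=Y_r$ and $\kappa=\alpha(n)$ (Proposition~\ref{all}); if exactly one, say $E_{n/p_s}$, one first shows that all other $E_{n/p_i}$ lie on the same side of the separation (Proposition~\ref{one-1}), whence $n_s\ge 2$, and then an optimisation over the depth $k$ along the $p_s$-chain yields $|X|\ge\beta_s(n)$, with the monotonicity $\beta_s(n)>\beta_r(n)$ for $s<r$ (Lemma~\ref{compa-3}) forcing $s=r$ and $\kappa=\beta(n)$ (Theorem~\ref{one-6}). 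The $\min$ in the statement then falls out automatically, with no prior case split on the $2\phi$ threshold.

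Your $U$/$D$ framework is correct as far as it goes (the argument that every $V_d$ with $d$ divisible by some $d_a\in\mathcal O_A$ and some $d_b\in\mathcal O_B$, or dividing one of each, must lie in $X$ is fine, and $U\cap D=\varnothing$ holds for the reason you give). But as you yourself note, the inequality $w(U)+w(D)\ge\beta(n)$ is \emph{not} always true on the nose, and the ``extra classes'' you need when it fails are precisely what the paper isolates by counting the $E_{n/p_i}$'s in $X$. In effect, the paper's organising principle is what cuts your open-ended case analysis down to two structured cases, and the $\gamma(n)$ bound (Theorem~\ref{res-1}) is the key tool that makes this reduction possible; your plan never invokes $\gamma(n)$, which is why the case analysis stays amorphous. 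If you want to push your approach through, the cleanest fix is to graft exactly this counting idea onto it: first bound the number of $n/p_i$ lying outside $\mathcal O_A\cup\mathcal O_B$, and only then analyse $U$ and $D$.
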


\begin{theorem}\label{res-3}
Let $n=p_1p_2\cdots p_r$, where $r\geq 3$ and $p_1,p_2,\ldots,p_r$ are prime numbers with $p_1<p_2<\cdots <p_r$. Then
$\kappa(\mathcal{P}(C_n))= \min \{\alpha(n),\gamma(n)\}$.
\end{theorem}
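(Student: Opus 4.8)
The plan is to prove Theorem~\ref{res-3} in two parts: the upper bound $\kappa(\mathcal{P}(C_n)) \leq \min\{\alpha(n), \gamma(n)\}$, and the matching lower bound. For the upper bound we already have $\kappa(\mathcal{P}(C_n)) \leq \alpha(n)$ from (\ref{bound-1}) and $\kappa(\mathcal{P}(C_n)) \leq \gamma(n)$ from Theorem~\ref{res-1}, so nothing further is needed there. All the work is in the lower bound: we must show that every cut-set $X$ of $\mathcal{P}(C_n)$ satisfies $|X| \geq \min\{\alpha(n), \gamma(n)\}$. I would first recall the structure of $\mathcal{P}(C_n)$ in terms of the divisor lattice: for each divisor $d \mid n$, write $G_d$ for the set of elements of order $d$ (so $|G_d| = \phi(d)$), and note that two elements of orders $d$ and $d'$ are adjacent iff $d \mid d'$ or $d' \mid d$; within each $G_d$ the induced subgraph is complete. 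Since $n = p_1 \cdots p_r$ is squarefree, the divisor lattice is the Boolean lattice on $\{p_1, \dots, p_r\}$, which keeps the combinatorics tractable.

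The core of the argument is to analyze a minimum cut-set $X$ and a separation $(A,B)$ of $\mathcal{P}(C_n \setminus X)$. Every cut-set must contain the identity and all $\phi(n)$ generators, so write $|X| = \phi(n) + m$ where $m$ counts the non-generator, non-identity vertices in $X$; equivalently $X$ omits certain elements of proper order. For each proper divisor $d$ of $n$ with $1 < d < n$, let $X$ contain $\phi(d) - a_d - b_d$ vertices of order $d$, where $a_d, b_d \geq 0$ count how many order-$d$ vertices land in $A$, $B$ respectively. The separation condition forces: if $a_d > 0$ and $b_{d'} > 0$ then $d \nmid d'$ and $d' \nmid d$ (the corresponding vertices would be adjacent), and moreover every order-$d$ vertex not in $X$ must be assigned consistently — if $d \mid d'$ with $a_d, a_{d'} > 0$ that is fine, but $a_d > 0, b_{d'} > 0$ with $d\mid d'$ is forbidden. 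This means the set $\mathcal{D}_A = \{d : a_d > 0\}$ and $\mathcal{D}_B = \{d : b_d > 0\}$ form an "antichain-separated" pair in the Boolean lattice: no element of one divides an element of the other. I would then argue that to minimize $|X|$ we want $a_d + b_d = \phi(d)$ whenever possible (i.e. put \emph{all} order-$d$ vertices on one side), so $m = \sum_{d} \phi(d) \cdot [\text{$d$ is "cut", i.e. } a_d + b_d < \phi(d)]$, and the problem reduces to: choose a bipartition-with-a-middle of the proper divisors so that $\mathcal{D}_A$ and $\mathcal{D}_B$ are nonempty and divisibility-separated, minimizing $\sum_{d \text{ cut}} \phi(d)$, where $d$ is forced to be cut precisely when it has a divisor in (the down-closure issues of) both $\mathcal{D}_A$ and $\mathcal{D}_B$.

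The heart of the matter is then a purely combinatorial optimization over the Boolean lattice $B_r$. I expect that the optimum is achieved by one of two natural configurations, corresponding exactly to $\alpha(n)$ and $\gamma(n)$. The $\alpha(n)$ configuration: put $n/p_1$ (the maximal divisor missing $p_1$) and everything it divides on one side and... — more precisely, separate according to divisibility by $p_1$, forcing exactly the divisors that are proper multiples-pattern to be cut, which recovers the $p_1\cdots p_{r-1} - \phi(p_1\cdots p_{r-1})$ term. The $\gamma(n)$ configuration: a finer split that additionally separates using $p_r$ among the divisors of $p_1 \cdots p_{r-1}$, producing the three-term bracket $\phi(p_1\cdots p_{r-1}) + \phi(p_1\cdots p_{r-2}p_r) + p_1\cdots p_{r-2} - \phi(p_1\cdots p_{r-2})$. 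I would set up an exchange/extremal argument: take an optimal separation, show WLOG one side's divisor set $\mathcal{D}_A$ may be taken to be an up-set (order filter) in $B_r$ (replacing each $d \in \mathcal{D}_A$ by all its multiples can only help since multiples are not adjacent to anything in $\mathcal{D}_B$ that was already forbidden), hence $\mathcal{D}_A$ is determined by its minimal elements, an antichain; then a case analysis on the structure of that antichain — essentially whether it is a single element, or forces the "$p_r$-refinement" — pins down the two candidate values and shows all other antichains give something $\geq$ one of them.

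The main obstacle I anticipate is the extremal combinatorial argument in the last step: showing rigorously that no antichain in $B_r$ (other than the two distinguished configurations) yields a smaller value of $\sum_{d \text{ cut}} \phi(d)$. This requires carefully identifying, for a given up-set $\mathcal{D}_A$, exactly which divisors $d$ are "forced cut" — namely those $d \notin \mathcal{D}_A$ such that either $d$ has a multiple... no: those $d$ such that $d$ lies in $\mathcal{D}_A$'s complement but some divisor of $d$ and some divisor-comparable chain connects to both sides — and then bounding $\sum \phi$ over this forced set below by the $\alpha$- or $\gamma$-value. The multiplicativity of $\phi$ over the squarefree structure and the telescoping identity $\sum_{d \mid N} \phi(d) = N$ will be the key computational tools; the delicate point is handling the interaction of the two primes $p_1$ (the smallest, which the $\alpha$-bound exploits) and $p_r$ (the largest, which the $\gamma$-bound exploits) and proving that mixing them never beats using just one. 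I would organize this as a sequence of lemmas: (1) reduction to the up-set optimization; (2) for an up-set with minimal antichain $\mathcal{A}$, an explicit formula/lower bound for the cut cost; (3) the case analysis $|\mathcal{A}| = 1$ versus $|\mathcal{A}| \geq 2$, concluding that the minimum over all valid $\mathcal{A}$ equals $\min\{\alpha(n), \gamma(n)\}$.
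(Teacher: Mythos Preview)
Your overall strategy---reduce the lower bound to a lattice optimization over the Boolean divisor poset using Lemma~\ref{simple}---is reasonable in outline, but the up-set reduction in your third paragraph does not work as stated, and without it the rest of the plan has no foundation. You claim that for $d \in \mathcal{D}_A$, any proper multiple $d'$ of $d$ can be moved into $A$ because ``multiples are not adjacent to anything in $\mathcal{D}_B$ that was already forbidden.'' This is false: $d'$ may have a divisor $e \in \mathcal{D}_B$ with $e \nmid d$ and $d \nmid e$ (so $d$ and $e$ are legitimately on opposite sides), yet $e \mid d'$ makes $d'$ adjacent to $e$, forcing $d' \in X$. Concretely, for $r=3$: if $E_{p_1} \subseteq A$ and $E_{p_2} \subseteq B$, then every element of $E_{p_1 p_2}$ is adjacent to both sides and must lie in $X$; you cannot enlarge $\mathcal{D}_A$ to include $p_1 p_2$. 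Thus $\mathcal{D}_A$ need not be, and cannot in general be replaced by, an order filter, so your proposed case analysis on the minimal antichain of an up-set never gets off the ground.

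The paper's route avoids any global lattice reduction and works entirely at the top layer $\{E_{n/p_i}: i \in [r]\}$. By Lemma~\ref{simple} a minimum cut-set $X$ either contains or is disjoint from each $E_{n/p_i}$, and the argument splits on how many are contained in $X$. Proposition~\ref{two-atmost} (itself an application of the bound $|X| \le \gamma(n)$) shows there are at most two; Corollary~\ref{one-2} rules out exactly one, since that would force $n_s \ge 2$, impossible in the squarefree case. When none are in $X$, Proposition~\ref{all} uses a direct inclusion--exclusion on the subgroups $S_{n/(p_i p_j)}$ forced into $X$, together with a swap argument comparing the unions $K_1 \cup K_2$ and $K_1 \cup K_3$, to conclude $X = Y_r$ and $|X| = \alpha(n)$. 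When exactly two are in $X$, Proposition~\ref{two-2} uses the monotonicity Lemmas~\ref{inequ-1-1} and~\ref{inequ-3} to pin the pair down to $\{r-1,r\}$ and then show $X = X_{r-1,r}$, giving $|X| = \gamma(n)$. So the paper replaces your hoped-for extremal principle by a finite case split with explicit subgroup-size estimates; the crucial ingredient you are missing is the ``at most two of the $E_{n/p_i}$'' bound, which does the work your up-set reduction was meant to do. (As a side remark, your description of the $\alpha(n)$ configuration reverses the roles of $p_1$ and $p_r$: the cut-set $Y_r$ isolates $p_r$, not $p_1$, which is why the bracket in $\alpha(n)$ reads $p_1 \cdots p_{r-1} - \phi(p_1 \cdots p_{r-1})$.)
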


\section{Preliminaries}

Recall that $\phi$ is a multiplicative function, that is, $\phi(ab)=\phi(a)\phi(b)$ for any two positive integers $a,b$ which are relatively prime. We have $\phi(p^k)=p^{k-1}(p-1)=p^{k-1}\phi(p)$ for any prime $p$ and positive integer $k$. Also, $\underset{d\mid m}{\sum} \phi(d) = m$ for every positive integer $m$.

For an element $x\in C_n$, we denote by $o(x)$ the order of $x$. Let $x,y$ be two distinct elements of $C_n$. If $x,y$ are adjacent in $\mathcal{P}(C_n)$, then $o(x)\mid o(y)$ or $o(y)\mid o(x)$ according as $x\in\langle y\rangle$ or $y\in\langle x\rangle$. The converse statement is also true, that is, if $o(x)\mid o(y)$ or $o(y)\mid o(x)$, then $x,y$ are adjacent in $\mathcal{P}(C_n)$. This follows from the fact that $C_n$ (being cyclic) has a unique subgroup of order $d$ for every positive divisor $d$ of $n$. We shall use the converse statement frequently without mentioning it.
For a positive divisor $d$ of $n$, define the following two sets:
\begin{enumerate}
\item[] $E_d:=\{x\in C_n: o(x)=d\}$, the set of all elements of $C_n$ whose order is $d$,
\item[] $S_d:=\{x\in C_n: o(x)\mid d\}$, the set of all elements of $C_n$ whose order divides $d$.
\end{enumerate}
Then $S_d$ is a cyclic subgroup of $C_n$ of order $d$ and $E_d$ is precisely the set of generators of $S_d$. So
$\left\vert S_d\right\vert =d$ and $\left\vert E_d\right\vert =\phi(d)$. Note that any cut-set of $\mathcal{P}(C_n)$ must contain the two sets $E_n$ and $E_1$, as each element from these two sets is adjacent with all other elements.

For a given non-empty proper subset $X$ of $C_n$, we define $\overline{X}:=C_n\setminus X$ and denote by $\mathcal{P}(\overline{X})$ the induced subgraph of $\mathcal{P}(C_n)$ with vertex set $\overline{X}$. The following result is very useful throughout the paper, see \cite[Lemma 2.1]{cps-1}.

\begin{lemma}\cite{cps-1}\label{simple}
If $X$ is a minimal cut-set of $\mathcal{P}(C_n)$, then either $E_d\subseteq X$ or $E_d\cap X=\emptyset$ for each positive divisor $d$ of $n$.
\end{lemma}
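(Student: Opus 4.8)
The plan is to exploit the fact that all elements of $E_d$ are, from the point of view of the power graph, interchangeable: they form a clique and share the same neighbours everywhere else. Concretely, I would first record the following \emph{twin property}. Let $d\mid n$ and let $a,b\in E_d$ be distinct (so necessarily $\phi(d)\geq 2$). Then $o(a)=o(b)=d$, so $a,b$ are adjacent; and for any $z\in C_n\setminus\{a,b\}$ one has $a$ adjacent to $z$ iff $d\mid o(z)$ or $o(z)\mid d$, which is exactly the condition for $b$ to be adjacent to $z$. Hence $a$ and $b$ have identical neighbourhoods in $\mathcal{P}(C_n)$ outside the pair $\{a,b\}$. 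This single observation, which follows at once from the order-divisibility characterization of adjacency recalled before Lemma~\ref{simple}, is the engine of the whole argument.

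I would then argue by contradiction. Suppose $X$ is a minimal cut-set and that for some divisor $d$ of $n$ we have $E_d\cap X\neq\emptyset$ but $E_d\not\subseteq X$. Choose $a\in E_d\cap X$ and $b\in E_d\setminus X$; these are distinct elements of $E_d$, so the twin property applies to them. Since $X$ is a cut-set, $\mathcal{P}(\overline{X})$ is disconnected, so fix a separation $(A,B)$ of $\mathcal{P}(\overline{X})$, that is, a partition $\overline{X}=A\sqcup B$ into non-empty parts with no edge of $\mathcal{P}(C_n)$ joining $A$ to $B$. Note $b\in\overline{X}$, so $b$ lies in one of the parts; without loss of generality take $b\in A$.

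Next I would invoke minimality. Because $X$ is minimal, $X\setminus\{a\}$ is not a cut-set, and since $\overline{X\setminus\{a\}}=\overline{X}\cup\{a\}$, the induced subgraph $\mathcal{P}(\overline{X}\cup\{a\})$ is connected. The only vertex of this graph lying outside $A\cup B$ is $a$, and there are no $A$--$B$ edges; hence, for connectivity, $a$ must be adjacent to at least one vertex of $B$ (otherwise $B$ would be a union of connected components disjoint from $A\cup\{a\}$, contradicting connectedness). Fix such a vertex $v\in B$. Since $v\in B\subseteq\overline{X}$ while $a\in X$, we have $v\neq a$; and since $v\in B$ while $b\in A$, we have $v\neq b$. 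Thus $v\notin\{a,b\}$, so the twin property forces $b$ to be adjacent to $v$ as well. But $b\in A$ and $v\in B$, so this is an edge of $\mathcal{P}(C_n)$ joining $A$ to $B$, contradicting the choice of separation. This contradiction shows no $E_d$ can be split, proving the lemma.

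The argument is short, and the only step requiring care—really the crux—is the bridging deduction in the last paragraph: translating "$X\setminus\{a\}$ is not a cut-set" into the statement that the single restored vertex $a$ must reach across the separation into $B$, and then transferring that cross-edge to the surviving twin $b$. Everything else is bookkeeping with the sets $E_d$ and the separation $(A,B)$, and no estimates on $\phi$ or on the primes $p_i$ are needed, since the statement is purely structural.
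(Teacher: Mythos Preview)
Your proof is correct. The twin property you isolate---that any two elements of $E_d$ are adjacent and have identical neighbours in $C_n\setminus E_d$---is exactly what drives the result, and your bridging step (minimality forces $a$ to have a neighbour in $B$, which then transfers to $b$) is clean and complete. One small point you left implicit but which is harmless: $a$ certainly has a neighbour in $A$ as well, namely $b$ itself, so the only way $\mathcal{P}(\overline{X}\cup\{a\})$ could fail to be connected is if $a$ misses $B$ entirely; you handle this correctly.

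Note, however, that the present paper does not actually prove this lemma: it is quoted from \cite{cps-1} (as Lemma~2.1 there) and stated here without argument. So there is no ``paper's own proof'' to compare against. Your argument is a perfectly good self-contained proof and is in fact the standard one for results of this type (twin vertices cannot be separated by a minimal cut-set).
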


\noindent As a consequence of Lemma \ref{simple}, we have

\begin{corollary}
Suppose that $X$ is a minimal cut-set of $\mathcal{P}(C_n)$ and $A\cup B$ is a separation of $\mathcal{P}(\overline{X})$. Then for every positive divisor $d$ of $n$, there are three possibilities for the set $E_d$: either $E_d\subseteq X$, $E_d\subseteq A$ or $E_d\subseteq B$.
\end{corollary}

\subsection{Elementary results}

The following result can be found in \cite[Lemma 3.1]{cps-1}.

\begin{lemma}\label{inequ-1}
Let $p_1<p_2< \ldots <p_t$ be prime numbers with $t\geq 1$. Then $q \phi(p_1p_2\cdots p_{t}) \geq p_1p_2\cdots p_{t}$ for any integer $q\geq t+1$, with equality when $(t,p_1,q)=(1,2,2)$ or $(t,p_1,p_2,q)=(2,2,3,3)$.
\end{lemma}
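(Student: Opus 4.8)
The plan is to divide both sides of the claimed inequality by $\phi(p_1p_2\cdots p_t)=\prod_{i=1}^{t}(p_i-1)$, which turns it into the equivalent and more transparent statement
$$q\ \ge\ \prod_{i=1}^{t}\frac{p_i}{p_i-1}.$$
Since $q\ge t+1$ by hypothesis, it then suffices to prove $\prod_{i=1}^{t}\frac{p_i}{p_i-1}\le t+1$, and afterwards to track exactly when the resulting chain of inequalities collapses to a chain of equalities.

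First I would bound the product from above by replacing the primes $p_i$ with the smallest possible primes. The map $x\mapsto \frac{x}{x-1}=1+\frac{1}{x-1}$ is strictly decreasing for $x>1$, and if $q_i$ denotes the $i$-th smallest prime then $p_i\ge q_i$ for every $i$ (the $i$-th smallest element of any set of $t$ distinct primes is at least the $i$-th prime overall). Hence $\prod_{i=1}^{t}\frac{p_i}{p_i-1}\le \prod_{i=1}^{t}\frac{q_i}{q_i-1}$, and, since all factors are positive and the map is strictly monotone, equality holds if and only if $p_i=q_i$ for all $i$, i.e.\ exactly when $p_1,\ldots,p_t$ are the first $t$ primes.

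Next I would prove $P(t):=\prod_{i=1}^{t}\frac{q_i}{q_i-1}\le t+1$ by induction on $t$, being careful about strictness. The base cases are direct evaluations: $P(1)=\frac{2}{1}=2=1+1$, $P(2)=\frac{2}{1}\cdot\frac{3}{2}=3=2+1$, and $P(3)=3\cdot\frac{5}{4}=\frac{15}{4}<4$. For the inductive step with $t\ge 3$, assume $P(t)<t+1$; then $P(t+1)=P(t)\cdot\frac{q_{t+1}}{q_{t+1}-1}<(t+1)\cdot\frac{q_{t+1}}{q_{t+1}-1}$. Now the $n$-th prime satisfies $q_n\ge n+1$ (it is the largest of $n$ distinct integers that are all $\ge 2$), so $q_{t+1}\ge t+2$, which rearranges to $(t+1)\frac{q_{t+1}}{q_{t+1}-1}\le t+2$; hence $P(t+1)<t+2$. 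This yields $P(t)\le t+1$ for all $t\ge 1$, with equality exactly for $t\in\{1,2\}$.

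Finally, for the equality analysis: if $q\,\phi(p_1\cdots p_t)=p_1\cdots p_t$, then $q=\prod_{i=1}^{t}\frac{p_i}{p_i-1}\le \prod_{i=1}^{t}\frac{q_i}{q_i-1}\le t+1\le q$, forcing equality at every step. Equality in the first inequality forces $(p_1,\ldots,p_t)$ to be the first $t$ primes, equality in the second forces $t\in\{1,2\}$, and equality in $t+1\le q$ forces $q=t+1$. Assembling these gives precisely $(t,p_1,q)=(1,2,2)$ and $(t,p_1,p_2,q)=(2,2,3,3)$. I do not expect a serious obstacle; the only delicate point is threading the three equality conditions through simultaneously so as to isolate exactly these two extremal configurations.
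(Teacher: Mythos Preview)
The paper does not supply its own proof of this lemma; it simply cites \cite[Lemma 3.1]{cps-1}. So there is no in-paper argument to compare against.

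Your proof is correct and complete. Rewriting the inequality as $q\ge\prod_{i=1}^{t}\frac{p_i}{p_i-1}$, reducing to the extremal case where $p_1,\ldots,p_t$ are the first $t$ primes via the monotonicity of $x\mapsto x/(x-1)$, and then running the induction on $P(t)=\prod_{i\le t}\frac{q_i}{q_i-1}\le t+1$ is exactly the natural route. The base-case checks and the inductive bound $(t+1)\frac{q_{t+1}}{q_{t+1}-1}\le t+2\iff q_{t+1}\ge t+2$ are clean, and your equality analysis correctly threads the three simultaneous equality conditions to isolate precisely $(t,p_1,q)=(1,2,2)$ and $(t,p_1,p_2,q)=(2,2,3,3)$. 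One small remark: the lemma as stated says ``with equality when\ldots'', which on a literal reading only asserts sufficiency; you have in fact established the stronger ``if and only if'' characterization, which is what the paper's later applications implicitly rely on.
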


\noindent The proof of the following lemma is similar to that of \cite[Lemma 2.1]{cps}.

\begin{lemma}\label{inequ-2}
Let $m=p_1^{m_1}p_2^{m_2}\cdots p_t^{m_t}$, where $t\geq 2$, $m_1,m_2,\ldots, m_t$ are positive integers and $p_1,p_2,\ldots,p_t$ are prime numbers with $p_1<p_2<\cdots <p_t$. Then
$$\phi\left(\frac{m}{p_i}\right)\geq p_k^{m_k -1} \phi\left(\frac{m}{p_k^{m_k}}\right)$$
for $2\leq k\leq t$ and $1\leq i\leq k-1$, where the inequality is strict except when $k=2$, $(p_1,p_2)=(2,3)$ and $m_1\geq 2$.
\end{lemma}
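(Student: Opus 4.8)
The plan is to rewrite both sides of the claimed inequality as explicit rational multiples of $\phi(m)$, using only the multiplicativity of $\phi$ together with $\phi(p^j)=p^{j-1}(p-1)$, and then to compare the resulting denominators. For the right-hand side, since $p_k^{m_k}$ and $m/p_k^{m_k}$ are coprime, multiplicativity gives $\phi(m)=\phi(p_k^{m_k})\,\phi(m/p_k^{m_k})=p_k^{m_k-1}(p_k-1)\,\phi(m/p_k^{m_k})$, hence
\[
p_k^{m_k-1}\,\phi\!\left(\frac{m}{p_k^{m_k}}\right)=\frac{\phi(m)}{p_k-1}.
\]
For the left-hand side I would split on the exponent $m_i$: if $m_i\geq 2$ then $p_i$ still divides $m/p_i$ and comparing $\phi(p_i^{m_i-1})$ with $\phi(p_i^{m_i})$ yields $\phi(m/p_i)=\phi(m)/p_i$; if $m_i=1$ then $m/p_i$ is coprime to $p_i$, so $\phi(m)=(p_i-1)\,\phi(m/p_i)$, i.e.\ $\phi(m/p_i)=\phi(m)/(p_i-1)$.

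With these identities, after cancelling the positive factor $\phi(m)$, the asserted inequality reduces to $p_i\leq p_k-1$ in the case $m_i\geq 2$, and to $p_i-1\leq p_k-1$ (equivalently $p_i\le p_k$) in the case $m_i=1$. Since $1\le i\le k-1$ we have $p_i<p_k$, and as these are integers this already forces $p_i\le p_k-1$, so both inequalities hold. For the equality analysis: in the case $m_i=1$ we in fact get $p_i-1<p_k-1$, so the inequality is strict; in the case $m_i\geq 2$ equality occurs precisely when $p_i=p_k-1$, and since both are prime this forces $p_i=2$, $p_k=3$, so $p_1<\cdots<p_t$ and $i<k$ give $i=1$, $k=2$, $(p_1,p_2)=(2,3)$ and $m_1\ge 2$ — exactly the exceptional case, where one checks directly that equality does hold.

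There is no serious obstacle here: the argument is essentially bookkeeping with $\phi$, and the only points needing care are the case distinction $m_i=1$ versus $m_i\geq 2$ when simplifying $\phi(m/p_i)$, and confirming that the unique equality case coincides with the one recorded in the statement. This mirrors the proof of \cite[Lemma 2.1]{cps}.
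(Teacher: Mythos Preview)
Your proof is correct and is precisely the argument the paper has in mind: the paper does not give a proof but defers to \cite[Lemma 2.1]{cps}, whose method you have reproduced by expressing both sides as $\phi(m)/(p_k-1)$ and $\phi(m)/p_i$ or $\phi(m)/(p_i-1)$ and comparing denominators. The case split on $m_i$ and the identification of the equality case $(k,p_1,p_2,m_1)=(2,2,3,\geq 2)$ are handled correctly.
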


\begin{lemma}\label{inequ-1-1}
Let $m=p_1^{m_1}p_2^{m_2}\cdots p_t^{m_t}$, where $t\geq 2$, $m_1,m_2,\ldots, m_t$ are positive integers and $p_1,p_2,\ldots,p_t$ are prime numbers with $p_1<p_2<\cdots <p_t$. Then
$$\phi\left(\frac{m}{p_i}\right)\geq \phi\left(\frac{m}{p_k}\right)$$
for $1\leq i<k\leq t$, where equality holds if and only if $(k,p_1,p_2)=(2,2,3)$ with $m_1\geq 2$ and $m_2 =1$.
\end{lemma}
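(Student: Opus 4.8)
The plan is to reduce the claimed inequality to a comparison of two small quantities attached to the primes $p_i$ and $p_k$. The key observation is that, for any index $j$, the value $\phi(m/p_j)$ depends on $m_j$ only through whether $m_j=1$ or $m_j\geq 2$. Indeed, writing $\phi(m)=\prod_{\ell=1}^{t}p_\ell^{m_\ell-1}(p_\ell-1)$ and noting that $m/p_j=p_j^{m_j-1}\prod_{\ell\neq j}p_\ell^{m_\ell}$, one gets $\phi(m/p_j)=\phi(m)/p_j$ when $m_j\geq 2$ and $\phi(m/p_j)=\phi(m)/(p_j-1)$ when $m_j=1$. So, setting $f(p_j,m_j):=p_j$ if $m_j\geq 2$ and $f(p_j,m_j):=p_j-1$ if $m_j=1$, we have $\phi(m/p_j)=\phi(m)/f(p_j,m_j)$ for every $j$, and the desired inequality $\phi(m/p_i)\geq\phi(m/p_k)$ is equivalent to $f(p_i,m_i)\leq f(p_k,m_k)$.

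Next I would record the trivial two-sided bound $p_j-1\leq f(p_j,m_j)\leq p_j$. Since $i<k$ and the $p_\ell$ are strictly increasing primes, $p_i<p_k$, hence $p_i\leq p_k-1$. Stringing these together gives
$$f(p_i,m_i)\leq p_i\leq p_k-1\leq f(p_k,m_k),$$
which proves the inequality. (Alternatively, one can deduce the inequality from Lemma \ref{inequ-2} together with $\phi(p_k^{m_k-1})\leq p_k^{m_k-1}$ and $\phi(m/p_k)=\phi(p_k^{m_k-1})\,\phi(m/p_k^{m_k})$, but the direct argument above is self-contained.)

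For the equality analysis I would simply trace where each of the three inequalities above can be tight. Equality $\phi(m/p_i)=\phi(m/p_k)$ forces $f(p_i,m_i)=p_i$, i.e. $m_i\geq 2$; it forces $p_i=p_k-1$, and since $p_i,p_k$ are primes this is only possible for $p_i=2$, $p_k=3$; and it forces $f(p_k,m_k)=p_k-1$, i.e. $m_k=1$. Because $2$ is the least prime and the $p_\ell$ increase, $p_i=2$ forces $i=1$, and then $p_k=3$ forces $k=2$. Conversely, in the configuration $i=1$, $k=2$, $p_1=2$, $p_2=3$, $m_1\geq 2$, $m_2=1$, all three pinch-points are attained, so $\phi(m/p_1)=\phi(m/p_2)$. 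This is exactly the stated equality case $(k,p_1,p_2)=(2,2,3)$ with $m_1\geq 2$ and $m_2=1$.

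I do not expect any genuine obstacle here; the argument is elementary once $\phi(m/p_j)$ is rewritten via the function $f$. The only point requiring care is the bookkeeping in the equality discussion: one must check that all three inequalities in the chain are simultaneously tight precisely under the stated hypotheses, and in particular that $p_i=2$ forces $i=1$ and hence $k=2$, so that the equality case indeed takes the form asserted in the lemma.
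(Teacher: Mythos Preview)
Your proof is correct. The paper's argument is precisely the alternative you mention in passing: it chains Lemma~\ref{inequ-2}, namely $\phi(m/p_i)\geq p_k^{m_k-1}\phi(m/p_k^{m_k})$, with the observation $p_k^{m_k-1}\phi(m/p_k^{m_k})\geq \phi(p_k^{m_k-1})\phi(m/p_k^{m_k})=\phi(m/p_k)$, the latter being an equality iff $m_k=1$; combining the two equality conditions recovers the stated case. Your main argument instead rewrites $\phi(m/p_j)=\phi(m)/f(p_j,m_j)$ and reduces everything to the chain $f(p_i,m_i)\leq p_i\leq p_k-1\leq f(p_k,m_k)$, which is self-contained and does not invoke Lemma~\ref{inequ-2}. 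The paper's route has the advantage of reusing an existing lemma, while yours is more transparent: the three-link chain makes the equality analysis immediate and isolates exactly why consecutive primes $2,3$ are forced. Both are equally valid; your version would stand on its own even if Lemma~\ref{inequ-2} were removed from the paper.
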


\begin{proof}
Since $k\geq 2$, we have $p_k\geq 3$. Then $p_k^{m_k -1} \phi\left(\frac{m}{p_k^{m_k}}\right)\geq \phi\left(\frac{m}{p_k}\right)$ with equality if and only if $m_k=1$. Now the result follows from Lemma \ref{inequ-2}.
\end{proof}

\noindent The following lemma follows by expanding $\phi(p_1p_2\cdots p_t)=(p_1 -1)(p_2 -1)\cdots (p_t -1)$.

\begin{lemma}\label{inequ-2-1}
Let $p_1,p_2, \ldots,p_t$ be pairwise distinct prime numbers with $t\geq 1$. Then
$$p_1p_2\cdots p_t-\phi(p_1p_2\cdots p_t)= \underset{i=1}{\overset{t}{\sum}}\frac{p_{1}\cdots p_{t}}{p_{i}} - \underset{i<j}{\underset{i,j=1}{\overset{t}{\sum}}}\frac{p_{1}\cdots p_{t}}{p_{i} p_{j}} + \underset{i<j<k}{\underset{i,j,k=1}{\overset{t}{\sum}}}\frac{p_{1}\cdots p_{t}}{p_ip_{j} p_{k}} +\ldots +(-1)^{t-1}.$$
\end{lemma}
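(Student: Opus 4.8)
The plan is to read the identity off from the expansion of $\phi(p_1p_2\cdots p_t)=(p_1-1)(p_2-1)\cdots(p_t-1)$ by the distributive law, as the sentence preceding the lemma suggests. First I would record that, with the convention that an empty product equals $1$,
$$\phi(p_1p_2\cdots p_t)=\prod_{i=1}^{t}(p_i-1)=\sum_{S\subseteq\{1,2,\ldots,t\}}(-1)^{|S|}\,\frac{p_1p_2\cdots p_t}{\prod_{i\in S}p_i},$$
where the summand indexed by $S=\emptyset$ equals $p_1p_2\cdots p_t$ and the summand indexed by $S=\{1,2,\ldots,t\}$ equals $(-1)^{t}$. (Equivalently, each subset $S$ of $\{1,\ldots,t\}$ corresponds to choosing the $-1$ from the factors $p_i-1$ with $i\in S$ and the $p_i$ from the remaining factors, so that the chosen term is $(-1)^{|S|}\prod_{i\notin S}p_i$.)

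Next I would transpose: cancelling the $S=\emptyset$ summand against $p_1p_2\cdots p_t$ gives
$$p_1p_2\cdots p_t-\phi(p_1p_2\cdots p_t)=\sum_{\emptyset\neq S\subseteq\{1,2,\ldots,t\}}(-1)^{|S|+1}\,\frac{p_1p_2\cdots p_t}{\prod_{i\in S}p_i}.$$
Then I would collect the nonempty subsets $S$ by cardinality: those with $|S|=1$ contribute $+\sum_{i}\frac{p_1\cdots p_t}{p_i}$, those with $|S|=2$ contribute $-\sum_{i<j}\frac{p_1\cdots p_t}{p_ip_j}$, those with $|S|=3$ contribute $+\sum_{i<j<k}\frac{p_1\cdots p_t}{p_ip_jp_k}$, and so on, the unique subset of size $t$ contributing $(-1)^{t+1}=(-1)^{t-1}$. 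This is exactly the right-hand side claimed in the lemma.

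I do not expect any real obstacle here: the statement is an instance of inclusion--exclusion, and the only point that needs a little care is the sign bookkeeping, namely that moving a term out of $\phi(p_1p_2\cdots p_t)$ turns $(-1)^{|S|}$ into $(-1)^{|S|+1}$, so that a subset of size $s$ carries sign $(-1)^{s-1}$ in the final sum. As a sanity check one may look at $t=1$, where the identity reads $p_1-(p_1-1)=1$, and $t=2$, where it reads $p_1p_2-(p_1-1)(p_2-1)=p_1+p_2-1$; neither special case is needed for the argument, but both make the pattern transparent.
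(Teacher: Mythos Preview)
Your proof is correct and follows exactly the approach the paper indicates: expand $\phi(p_1p_2\cdots p_t)=(p_1-1)(p_2-1)\cdots(p_t-1)$ by the distributive law, identify the term corresponding to $S=\emptyset$ as $p_1p_2\cdots p_t$, and rearrange. The paper itself gives no further details beyond that one-line hint, so your write-up is in fact more explicit than the original.
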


We denote by $[m]$ the set $\{1,2,\dots,m\}$ for a given positive integer $m$. The following lemma can be seen using the fact that $\underset{d\mid m}{\sum} \phi(d) = m$ for every positive integer $m$.

\begin{lemma}\label{inequ-3}
Let $p_1,p_2, \ldots,p_t$ be prime numbers with $p_1<p_2<\cdots <p_t$. Then $p_{i_1}p_{i_2}\cdots p_{i_k}-\phi(p_{i_1}p_{i_2}\cdots p_{i_k}) \geq p_1p_2\cdots p_{k}-\phi(p_1p_2\cdots p_{k})$ for any subset $\{i_1,i_2,\ldots,i_k\}$ of $[t]$, with equality if and only if $k=1$ or $\{i_1,i_2,\ldots,i_k\}=[k]$.
\end{lemma}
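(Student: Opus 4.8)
The plan is to exploit the identity $\sum_{d\mid m}\phi(d)=m$ to rewrite both sides of the inequality as sums of $\phi$-values over proper divisors, and then to compare those sums term by term under an order-preserving correspondence of index sets.

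First I would record the elementary reformulation. If $m$ is squarefree, say $m=q_1q_2\cdots q_s$ with $q_1<\cdots<q_s$ primes, then the divisors of $m$ are exactly the products $\prod_{j\in J}q_j$ over subsets $J\subseteq[s]$, and the proper divisors correspond to the proper subsets $J\subsetneq[s]$. Since $\sum_{d\mid m}\phi(d)=m$, subtracting the term $d=m$ gives
\[
m-\phi(m)=\sum_{\substack{d\mid m\\ d\neq m}}\phi(d)=\sum_{J\subsetneq[s]}\phi\!\left(\prod_{j\in J}q_j\right).
\]
Applying this with $m=p_{i_1}p_{i_2}\cdots p_{i_k}$ and with $m=p_1p_2\cdots p_k$ converts the two sides of Lemma \ref{inequ-3} into $\sum_{J\subsetneq[k]}\phi\!\left(\prod_{j\in J}p_{i_j}\right)$ and $\sum_{J\subsetneq[k]}\phi\!\left(\prod_{j\in J}p_j\right)$ respectively; here both sums run over the \emph{same} index set, namely all proper subsets $J$ of $[k]$, where $J$ records which of the chosen primes are multiplied together.

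Next I would compare the two sums term by term. Since $i_1<i_2<\cdots<i_k$ are distinct elements of $[t]$, we have $i_\ell\geq\ell$, and hence $p_{i_\ell}\geq p_\ell$, for every $\ell\in[k]$. Using that $\phi$ of a product of distinct primes $q_{j_1},\dots,q_{j_s}$ equals $\prod_{\ell}(q_{j_\ell}-1)$, it follows that for each fixed $J\subsetneq[k]$,
\[
\phi\!\left(\prod_{j\in J}p_{i_j}\right)=\prod_{j\in J}(p_{i_j}-1)\ \geq\ \prod_{j\in J}(p_j-1)=\phi\!\left(\prod_{j\in J}p_j\right),
\]
and summing over all $J\subsetneq[k]$ yields the asserted inequality.

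Finally, for the equality statement: if $k=1$ both sides equal $1$, so equality always holds (this also covers $\{i_1,\dots,i_k\}=[k]$ trivially). Conversely, suppose $k\geq 2$ and equality holds. Then the term-by-term inequality above must be an equality for every $J\subsetneq[k]$, in particular for each singleton $J=\{\ell\}$ (which is a proper subset of $[k]$ because $k\geq 2$), forcing $p_{i_\ell}-1=p_\ell-1$, i.e.\ $i_\ell=\ell$, for all $\ell\in[k]$; thus $\{i_1,\dots,i_k\}=[k]$. The only point needing a little care is recovering equality of the primes from equality of the $\phi$-values; for the singleton subsets this is immediate, and more generally it reduces to the observation that $\prod a_\ell=\prod b_\ell$ with $1\leq a_\ell\leq b_\ell$ forces $a_\ell=b_\ell$ for all $\ell$. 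I expect this equality bookkeeping, rather than the inequality itself, to be the only delicate part of the argument.
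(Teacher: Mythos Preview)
Your proof is correct and follows essentially the same route the paper indicates: using $\sum_{d\mid m}\phi(d)=m$ to write $m-\phi(m)$ as a sum of $\phi$-values over proper divisors, and then comparing term by term via $p_{i_\ell}\geq p_\ell$. Your treatment of the equality case (via the singleton subsets when $k\geq 2$) is a clean way to finish what the paper leaves implicit.
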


We shall use the following fact throughout the paper. If $G_1,G_2,\ldots ,G_k$ are subgroups of the cyclic group $C_n$, then the number of elements in the intersection $G_1\cap G_2\cap\ldots\cap G_k$ is equal to the greatest common divisor of the integers $|G_1|,|G_2|,\ldots ,|G_k|$.

\begin{lemma}\label{none-1}
Let $n=p_1^{n_1}p_2^{n_2}\cdots p_r^{n_r}$, where $r\geq 2$, $n_1,n_2,\ldots, n_r$ are positive integers and $p_1,p_2,\ldots,p_r$ are pairwise distinct prime numbers.
Let $\left\{a_1,a_2,\ldots,a_s\right\}$ and $\left\{b_1,b_2,\ldots, b_t\right\}$ be two disjoint subsets of $[r]$, where $s\geq 1$, $t\geq 1$ and $s+t \leq r$. If $K$ is the union of the subgroups $S_{\frac{n}{p_{a_{1}}p_{b_1}\cdots p_{b_t}}}$, $S_{\frac{n}{p_{a_{2}}p_{b_1}\cdots p_{b_t}}}$, $\ldots$, $S_{\frac{n}{p_{a_{s}}p_{b_1}\cdots p_{b_t}}}$ of $C_n$, then
$$|K|=\frac{n}{p_1p_2\cdots p_r}\times \left[\frac{p_{1}p_2\cdots p_{r}}{p_{b_1}\cdots p_{b_t}} - p_{c_{1}}\cdots p_{c_{u}}\phi(p_{a_{1}}\cdots p_{a_{s}})\right],$$
where $s+t+u=r$ and $\{c_1,c_2,\ldots,c_u\}=[r]\setminus\{a_1,\ldots,a_s,b_1,\ldots, b_t\}$.
\end{lemma}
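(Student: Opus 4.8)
The plan is to compute $|K|$ by inclusion–exclusion over the $s$ subgroups $S_{\frac{n}{p_{a_j}p_{b_1}\cdots p_{b_t}}}$, using the stated fact that the size of an intersection of subgroups of $C_n$ equals the gcd of their orders. First I would record the orders: for each $j\in[s]$, $\left|S_{\frac{n}{p_{a_j}p_{b_1}\cdots p_{b_t}}}\right|=\frac{n}{p_{a_j}p_{b_1}\cdots p_{b_t}}$. For a subset $\emptyset\neq J\subseteq[s]$, the intersection $\bigcap_{j\in J}S_{\frac{n}{p_{a_j}p_{b_1}\cdots p_{b_t}}}$ has order equal to $\gcd$ of these, which (since the $p_{a_j}$ are distinct primes and each appears to the first power in the denominator) is $\frac{n}{\left(\prod_{j\in J}p_{a_j}\right)p_{b_1}\cdots p_{b_t}}$. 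Writing $P:=p_1p_2\cdots p_r$, $B:=p_{b_1}\cdots p_{b_t}$ and $C:=p_{c_1}\cdots p_{c_u}$, so that $P=C\cdot B\cdot p_{a_1}\cdots p_{a_s}$, each such intersection has order $\frac{n}{P}\cdot\frac{P}{B\prod_{j\in J}p_{a_j}}=\frac{n}{P}\cdot C\cdot\frac{p_{a_1}\cdots p_{a_s}}{\prod_{j\in J}p_{a_j}}$.

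Next I would apply inclusion–exclusion:
\begin{align*}
|K| &= \sum_{\emptyset\neq J\subseteq[s]}(-1)^{|J|+1}\,\frac{n}{P}\,C\,\frac{p_{a_1}\cdots p_{a_s}}{\prod_{j\in J}p_{a_j}}\\
&= \frac{n}{P}\,C\left[\sum_{j}\frac{p_{a_1}\cdots p_{a_s}}{p_{a_j}}-\sum_{j<j'}\frac{p_{a_1}\cdots p_{a_s}}{p_{a_j}p_{a_{j'}}}+\cdots+(-1)^{s-1}\right].
\end{align*}
By Lemma \ref{inequ-2-1} applied to the primes $p_{a_1},\dots,p_{a_s}$, the bracketed alternating sum is exactly $p_{a_1}\cdots p_{a_s}-\phi(p_{a_1}\cdots p_{a_s})$. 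Hence $|K|=\frac{n}{P}\,C\left[p_{a_1}\cdots p_{a_s}-\phi(p_{a_1}\cdots p_{a_s})\right]=\frac{n}{P}\left[C\cdot p_{a_1}\cdots p_{a_s}-C\,\phi(p_{a_1}\cdots p_{a_s})\right]$. Since $C\cdot p_{a_1}\cdots p_{a_s}=\frac{P}{B}=\frac{p_1p_2\cdots p_r}{p_{b_1}\cdots p_{b_t}}$, this is precisely $\frac{n}{p_1p_2\cdots p_r}\left[\frac{p_1p_2\cdots p_r}{p_{b_1}\cdots p_{b_t}}-p_{c_1}\cdots p_{c_u}\phi(p_{a_1}\cdots p_{a_s})\right]$, as claimed.

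The only slightly delicate point — the place I would be most careful — is the gcd computation for the intersections: one must check that removing the single prime factors $p_{a_j}$ (each to the first power) from $n$ and taking gcd genuinely multiplies the "common part," i.e. that $\gcd_{j\in J}\frac{n}{p_{a_j}B}=\frac{n}{B\prod_{j\in J}p_{a_j}}$. This holds because the $p_{a_j}$ for $j\in J$ are pairwise distinct primes not dividing $B$ to the relevant power — more precisely, $n/(p_{a_j}B)$ and $n/(p_{a_{j'}}B)$ differ only in that the first is missing a factor $p_{a_j}$ and the second a factor $p_{a_{j'}}$, so their gcd is missing both; iterating gives the claim. Everything else is the bookkeeping of inclusion–exclusion together with a direct invocation of Lemma \ref{inequ-2-1}, so no genuine obstacle arises beyond keeping the index sets $\{a_j\}$, $\{b_j\}$, $\{c_j\}$ straight.
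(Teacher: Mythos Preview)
Your proposal is correct and follows essentially the same route as the paper's own proof: inclusion--exclusion over the $s$ subgroups, identification of each intersection size via the gcd of the orders, and then an appeal to Lemma~\ref{inequ-2-1} to collapse the alternating sum to $p_{a_1}\cdots p_{a_s}-\phi(p_{a_1}\cdots p_{a_s})$. The only cosmetic difference is your introduction of the shorthand $P,B,C$, which the paper does not use but which changes nothing substantive.
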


\begin{proof}
Since $K=\underset{j=1}{\overset{s}{\bigcup}} S_{\frac{n}{p_{a_{j}}p_{b_1}\cdots p_{b_t}}}$, we get that
\begin{align*}
\left\vert K\right\vert & = \underset{j=1}{\overset{s}{\sum}} \left\vert S_{\frac{n}{p_{a_{j}}p_{b_1}\cdots p_{b_t}}}\right\vert  - \underset{j<k}{\underset{j,k=1}{\overset{s}{\sum}}} \left\vert S_{\frac{n}{p_{a_{j}}p_{b_1}\cdots p_{b_t}}}\bigcap S_{\frac{n}{p_{a_{k}}p_{b_1}\cdots p_{b_t}}}\right\vert + \cdots + (-1)^{s-1} \left\vert \underset{j=1}{\overset{s}\bigcap} S_{\frac{n}{p_{a_{j}}p_{b_1}\cdots p_{b_t}}}\right\vert \\
& = \underset{j=1}{\overset{s}{\sum}} \frac{n}{p_{a_{j}}p_{b_1}\cdots p_{b_t}} - \underset{j<k}{\underset{j,k=1}{\overset{s}{\sum}}} \frac{n}{p_{a_{j}}p_{a_{k}}p_{b_1}\cdots p_{b_t}} + \cdots + (-1)^{s-1} \frac{n}{p_{a_{1}}\cdots p_{a_{s}}p_{b_1}\cdots p_{b_t}} \\
 & = \frac{n}{p_1p_2\cdots p_r}\times p_{c_{1}}\cdots p_{c_{u}}\times \left[\underset{j=1}{\overset{s}{\sum}}\frac{p_{a_{1}}\cdots p_{a_{s}}}{p_{a_{j}}} - \underset{j<k}{\underset{j,k=1}{\overset{s}{\sum}}}\frac{p_{a_{1}}\cdots p_{a_{s}}}{p_{a_{j}} p_{a_{k}}} +\cdots +(-1)^{s-1}\right]\\
 & = \frac{n}{p_1p_2\cdots p_r}\times p_{c_{1}}\cdots p_{c_{u}}\times \left[p_{a_{1}}\cdots p_{a_{s}} - \phi\left(p_{a_{1}}\cdots p_{a_{s}}\right) \right]\\
 & = \frac{n}{p_1p_2\cdots p_r}\times \left[\frac{p_{1}p_2\cdots p_{r}}{p_{b_1}\cdots p_{b_t}} - p_{c_{1}}\cdots p_{c_{u}}\phi(p_{a_{1}}\cdots p_{a_{s}})\right].
\end{align*}
We have used Lemma \ref{inequ-2-1} in the second last equality above.
\end{proof}

\section{Upper bounds} \label{revisit}

Let $n=p_1^{n_1}p_2^{n_2}\cdots p_r^{n_r}$, where $r\geq 2$, $n_1,n_2,\ldots, n_r$ are positive integers and $p_1,p_2,\ldots,p_r$ are prime numbers with $p_1<p_2<\cdots <p_r$.
For $1\leq j \leq r$, let $Y_j$ and $Z_j$ be the subsets of $C_n$ defined by
$$
Y_j :=E_n\bigcup \left(\underset{t\neq j}{\underset{t=1}{\overset{r}{\bigcup}}}S_{\frac{n}{p_jp_t}}\right),\hskip .5cm  Z_j :=E_n\bigcup\left({\underset{s=1}{\overset{n_j-1}{\bigcup}}}E_{\frac{n}{p_j^s}}\right)\bigcup \left(\underset{t\neq j}{\underset{t=1}{\overset{r}{\bigcup}}}S_{\frac{n}{p_j^{n_j}p_t}}\right).$$
Observe that $Z_j=Y_j$ if $n_j=1$. The following argument showing that $Y_j$ and $Z_j$ are cut-sets of $\mathcal{P}(C_n)$ is similar to the proof of \cite[Proposition 3.1]{cps}.\\

\noindent {\bf $Y_j$ is a cut-set of $\mathcal{P}(C_n)$:} For an element $x\in \overline{Y_j}$, observe that $o(x)$ is one of the following two types:
\begin{enumerate}
\item[(Y1)] $p_1^{n_1}\cdots p_{j-1}^{n_{j-1}} p_{j}^{s} p_{j+1}^{n_{j+1}}\cdots p_{r}^{n_r}$ for some $s\in\{0,1,\ldots, n_j -1\}$,
\item[(Y2)] $p_1^{l_1}\cdots p_{j-1}^{l_{j-1}} p_{j}^{n_j} p_{j+1}^{l_{j+1}}\cdots p_{r}^{l_r}$, where $0\leq l_i\leq n_i$ for each $i\in [r]\setminus\{j\}$ and $l_i\neq n_i$ for at least one $i\in [r]\setminus\{j\}$.
\end{enumerate}
Let $A_j$ (respectively, $B_j$) be the subset of $\overline{Y_j}$ consisting of all the elements whose order is of type (Y1) (respectively, type (Y2)). Then $A_j,B_j$ are nonempty sets and $A_j\cup B_j=\overline{Y_j}$.
Since $l_i\neq n_i$ for at least one $i\in [r]\setminus\{j\}$, no element of $A_j$ can be obtained as an integral power of any element of $B_j$. Since $s < n_j$, no element of $B_j$ can be obtained as a power of any element of $A_j$. Thus there is no edge of $\mathcal{P}(\overline{Y_j})$ with one vertex from $A_j$ and the other one from $B_j$. Therefore, $A_j\cup B_j$ is a separation of $\mathcal{P}(\overline{Y_j})$ and hence $Y_j$ is a cut-set of $\mathcal{P}(C_n)$.\\

\noindent {\bf $Z_j$ is a cut-set of $\mathcal{P}(C_n)$:} For an element $x\in \overline{Z_j}$, observe that $o(x)$ is one of the following two types:
\begin{enumerate}
\item[(Z1)] $p_1^{n_1}\cdots p_{j-1}^{n_{j-1}}p_{j+1}^{n_{j+1}}\cdots p_r^{n_r}$,
\item[(Z2)] $p_1^{l_1}\cdots p_{j-1}^{l_{j-1}}p_j^{t}p_{j+1}^{l_{j+1}}\cdots p_r^{l_r}$, where $1\leq t\leq n_j$, $0 \leq l_i\leq n_i$ for $i\in [r]\setminus\{j\}$ and  $l_i\neq n_i$ for at least one $i\in [r]\setminus\{j\}$.
\end{enumerate}
Let $K_j$ (respectively, $L_j$) be the subset of $\overline{Z_j}$ consisting of all the elements whose order is of type (Z1) (respectively, type (Z2)). A similar argument as in the case of $Y_j$ implies that $K_j\cup L_j$ is a separation of $\mathcal{P}(\overline{Z_j})$ and so $Z_j$ is a cut-set of $\mathcal{P}(C_n)$.\\

\noindent Now, for $1\leq j\leq r$, consider the integers $\alpha_j(n)$ and $\beta_j(n)$ defined by
\begin{align*}
\alpha_j(n)& :=\phi(n) + \frac{n}{p_1p_2 \cdots p_r}\times \left[\frac{p_1p_2\cdots p_{r}}{p_j}-\phi\left(\frac{p_1p_2\cdots p_{r}}{p_j}\right)\right],\\
\beta_j(n) & :=\phi(n) + \frac{n}{p_1p_2\cdots p_r}\times \frac{1}{p_j^{n_j-1}}\left[\frac{p_1p_2\cdots p_{r}}{p_j} +\phi\left(\frac{p_1p_2\cdots p_{r}}{p_j}\right)\left( p_j^{n_j-1}-2\right) \right].
\end{align*}
Note that $\alpha_r(n)=\alpha(n)$ and $\beta_r(n)=\beta(n)$, where $\alpha(n)$ and $\beta(n)$ are defined in the first section.

\begin{lemma}
$|Y_j|=\alpha_j(n)$ and $|Z_j|=\beta_j(n)$.
\end{lemma}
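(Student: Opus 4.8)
The plan is to compute $|Y_j|$ and $|Z_j|$ directly by inclusion–exclusion over the union of subgroups that defines each set, using Lemma \ref{none-1} as the main computational engine.

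First I would handle $|Y_j|$. By definition $Y_j = E_n \cup \left(\bigcup_{t\neq j} S_{\frac{n}{p_jp_t}}\right)$, and the set $E_n$ of generators is disjoint from each $S_{\frac{n}{p_jp_t}}$ (since every element of $S_{\frac{n}{p_jp_t}}$ has order a proper divisor of $n$). Hence $|Y_j| = |E_n| + \left|\bigcup_{t\neq j} S_{\frac{n}{p_jp_t}}\right| = \phi(n) + \left|\bigcup_{t\neq j} S_{\frac{n}{p_jp_t}}\right|$. Now the union $\bigcup_{t\neq j} S_{\frac{n}{p_jp_t}}$ is exactly of the form treated in Lemma \ref{none-1} with $t=1$, $b_1 = j$, and $\{a_1,\dots,a_s\} = [r]\setminus\{j\}$ (so $s = r-1$ and $u = 0$, meaning the product $p_{c_1}\cdots p_{c_u}$ is empty, i.e. equals $1$). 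Lemma \ref{none-1} then gives $\left|\bigcup_{t\neq j} S_{\frac{n}{p_jp_t}}\right| = \frac{n}{p_1\cdots p_r}\left[\frac{p_1\cdots p_r}{p_j} - \phi\!\left(\frac{p_1\cdots p_r}{p_j}\right)\right]$, and adding $\phi(n)$ yields exactly $\alpha_j(n)$.

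Next I would handle $|Z_j|$, which is more involved because of the extra layer $\bigcup_{s=1}^{n_j-1} E_{\frac{n}{p_j^s}}$. Again $E_n$ is disjoint from everything else in $Z_j$, so it contributes $\phi(n)$. The sets $E_{\frac{n}{p_j^s}}$ for $s=1,\dots,n_j-1$ are pairwise disjoint (distinct orders) and each is disjoint from every $S_{\frac{n}{p_j^{n_j}p_t}}$ (an element of the latter has order dividing $\frac{n}{p_j^{n_j}p_t}$, so its $p_j$-part is trivial, whereas an element of $E_{\frac{n}{p_j^s}}$ has $p_j$-part of order $p_j^{n_j-s}\geq p_j$). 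So $|Z_j| = \phi(n) + \sum_{s=1}^{n_j-1}\phi\!\left(\frac{n}{p_j^s}\right) + \left|\bigcup_{t\neq j} S_{\frac{n}{p_j^{n_j}p_t}}\right|$. For the telescoping sum, write $\phi\!\left(\frac{n}{p_j^s}\right) = p_j^{n_j-s-1}\phi(p_j)\cdot\phi\!\left(\frac{n}{p_j^{n_j}}\right)$ for $s\leq n_j-1$, so that $\sum_{s=1}^{n_j-1}\phi\!\left(\frac{n}{p_j^s}\right) = \phi\!\left(\frac{n}{p_j^{n_j}}\right)\phi(p_j)\sum_{s=1}^{n_j-1}p_j^{n_j-s-1} = \phi\!\left(\frac{n}{p_j^{n_j}}\right)(p_j^{n_j-1}-1)$. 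For the union term I again invoke Lemma \ref{none-1}, now with $b_1=j$ but using the subgroups $S_{\frac{n}{p_j^{n_j}p_t}}$; the only adjustment is a global factor $p_j^{-(n_j-1)}$ coming from the fact that $\frac{n}{p_j^{n_j}p_t} = \frac{1}{p_j^{n_j-1}}\cdot\frac{n}{p_jp_t}$, so $\left|\bigcup_{t\neq j}S_{\frac{n}{p_j^{n_j}p_t}}\right| = \frac{1}{p_j^{n_j-1}}\cdot\frac{n}{p_1\cdots p_r}\left[\frac{p_1\cdots p_r}{p_j}-\phi\!\left(\frac{p_1\cdots p_r}{p_j}\right)\right]$. (Here I would either rescale $n$ in the statement of Lemma \ref{none-1} or just redo the short inclusion–exclusion directly, since $\gcd$ of the orders behaves the same way after dividing out $p_j^{n_j-1}$.) Adding the three pieces and simplifying — expressing $\phi\!\left(\frac{n}{p_j^{n_j}}\right)$ in terms of $\frac{n}{p_1\cdots p_r}$, $p_j^{n_j-1}$, and $\phi\!\left(\frac{p_1\cdots p_r}{p_j}\right)$ — should collapse to exactly $\beta_j(n)$.

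The main obstacle, such as it is, is purely bookkeeping: correctly identifying which instance of Lemma \ref{none-1} applies to each union (in particular tracking the empty product $p_{c_1}\cdots p_{c_u}$ when $u=0$), and carefully carrying the $p_j^{n_j-1}$ denominator through the $Z_j$ computation so that the telescoped totient sum and the scaled union term combine into the single bracketed expression in $\beta_j(n)$. A small point worth checking separately is the boundary case $n_j=1$, where the $E_{\frac{n}{p_j^s}}$ layer is empty and $Z_j=Y_j$; there one should verify $\beta_j(n)=\alpha_j(n)$, which is immediate since the factor $\frac{1}{p_j^{n_j-1}}=1$ and the bracket reduces to $\frac{p_1\cdots p_r}{p_j}-\phi\!\left(\frac{p_1\cdots p_r}{p_j}\right)$.
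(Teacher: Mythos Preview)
Your proposal is correct and follows essentially the same approach as the paper: disjointness of $E_n$ (and, for $Z_j$, of the sets $E_{n/p_j^s}$) from the union of subgroups, Lemma~\ref{none-1} for the union size, and the geometric/telescoping totient sum for $\sum_{s=1}^{n_j-1}\phi(n/p_j^s)$. The paper also glosses over the rescaling issue for $\bigcup_{t\neq j}S_{n/(p_j^{n_j}p_t)}$ with a bare ``again using Lemma~\ref{none-1}'', so your explicit remark about applying the lemma to $m=n/p_j^{n_j-1}$ (or redoing inclusion--exclusion) is in fact a slight improvement in rigor.
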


\begin{proof}
Using Lemma \ref{none-1},
$$|Y_j|= |E_n| + \left\vert \underset{t\neq j}{\underset{t=1}{\overset{r}{\bigcup}}}S_{\frac{n}{p_jp_t}} \right\vert = \phi(n)+ \frac{n}{p_1\cdots p_r}\times \left[\frac{p_1\cdots p_r}{p_j}-\phi\left(\frac{p_1\cdots p_r}{p_j} \right) \right]=\alpha_j(n).$$
Since the sets $E_{\frac{n}{p_j}}, E_{\frac{n}{p_j^{2}}},\ldots, E_{\frac{n}{p_j^{n_j -1}}}$ are pairwise disjoint, we have
\begin{align*}
\left\vert{\underset{s=1}{\overset{n_j-1}{\bigcup}}}E_{\frac{n}{p_j^s}}\right\vert = {\underset{s=1}{\overset{n_j-1}{\sum}}} \left\vert E_{\frac{n}{p_j^s}}\right\vert & =\frac{n}{p_1\cdots p_r}\times \frac{1}{p_j^{n_j -1}}\times\phi\left(\frac{p_1\cdots p_r}{p_j} \right)\times \left[\phi\left(p_j^{n_j -1} \right)+ \cdots + \phi\left(p_j\right)\right]\\
 & =\frac{n}{p_1\cdots p_r}\times \frac{1}{p_j^{n_j -1}}\times\phi\left(\frac{p_1\cdots p_r}{p_j} \right)\left[p_j^{n_j -1} -1 \right].
\end{align*}
Again using Lemma \ref{none-1}, it can be calculated that
$$\left\vert \underset{t\neq j}{\underset{t=1}{\overset{r}{\bigcup}}}S_{\frac{n}{p_j^{n_j}p_t}} \right\vert = \frac{n}{p_1\cdots p_r}\times \frac{1}{p_j^{n_j -1}}\times\left[\frac{p_1\cdots p_r}{p_j}-\phi\left(\frac{p_1\cdots p_r}{p_j} \right) \right].$$
Therefore,
\begin{align*}
|Z_j| & = |E_n| + \left\vert{\underset{s=1}{\overset{n_j-1}{\bigcup}}}E_{\frac{n}{p_j^s}}\right\vert +\left\vert \underset{t\neq j}{\underset{t=1}{\overset{r}{\bigcup}}}S_{\frac{n}{p_j^{n_j}p_t}}\right\vert\\
 & = \phi(n)+ \frac{n}{p_1\cdots p_r}\times \frac{1}{p_j^{n_j -1}} \times \left[\frac{p_1\cdots p_r}{p_j}+\phi\left(\frac{p_1\cdots p_r}{p_j} \right)\left(p_j^{n_j -1} -2 \right) \right]=\beta_j(n).
 \end{align*}
 This completes the proof.
\end{proof}

\begin{lemma}\label{compa-1}
If $r\geq 3$, then $\alpha_1(n)>\alpha_2(n)>\cdots >\alpha_r(n)$.
\end{lemma}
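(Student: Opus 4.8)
The plan is to compare consecutive terms $\alpha_i(n)$ and $\alpha_{i+1}(n)$ directly; in fact it costs nothing extra to compare $\alpha_i(n)$ with $\alpha_k(n)$ for arbitrary $i<k$. Write $P:=p_1p_2\cdots p_r$ and recall that
$$\alpha_j(n)=\phi(n)+\frac{n}{P}\left[\frac{P}{p_j}-\phi\!\left(\frac{P}{p_j}\right)\right].$$
Since $\phi(n)$ and $n/P=p_1^{n_1-1}\cdots p_r^{n_r-1}\geq 1$ are positive quantities not depending on $j$, it suffices to prove that the integer $h(j):=\dfrac{P}{p_j}-\phi\!\left(\dfrac{P}{p_j}\right)$ is strictly decreasing in $j$.

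First I would fix $1\le i<k\le r$ and set $Q:=P/(p_ip_k)=\prod_{t\in[r]\setminus\{i,k\}}p_t$, so that $\phi(Q)=\prod_{t\in[r]\setminus\{i,k\}}(p_t-1)$ and $P/p_i=p_kQ$, $P/p_k=p_iQ$. From these,
$$\frac{P}{p_i}-\frac{P}{p_k}=(p_k-p_i)Q,\qquad \phi\!\left(\frac{P}{p_i}\right)-\phi\!\left(\frac{P}{p_k}\right)=\bigl[(p_k-1)-(p_i-1)\bigr]\phi(Q)=(p_k-p_i)\phi(Q),$$
and subtracting the second identity from the first yields
$$h(i)-h(k)=\left[\frac{P}{p_i}-\phi\!\left(\frac{P}{p_i}\right)\right]-\left[\frac{P}{p_k}-\phi\!\left(\frac{P}{p_k}\right)\right]=(p_k-p_i)\bigl(Q-\phi(Q)\bigr).$$

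It then remains only to check that the right-hand side is strictly positive. We have $p_k>p_i$ because the primes are listed in increasing order, and $Q-\phi(Q)\ge 1$ because the hypothesis $r\ge 3$ forces the index set $[r]\setminus\{i,k\}$ to be non-empty, whence $Q\ge p_1\ge 2$ and $\phi(Q)\le Q-1$. Therefore $h(i)>h(k)$, and so $\alpha_i(n)>\alpha_k(n)$, for all $i<k$; specializing to $k=i+1$ for $i=1,2,\dots,r-1$ gives the chain $\alpha_1(n)>\alpha_2(n)>\cdots>\alpha_r(n)$. There is essentially no obstacle in this argument: once the factorisation $h(i)-h(k)=(p_k-p_i)(Q-\phi(Q))$ is written down the conclusion is immediate, and the only point needing care is recording why the hypothesis $r\ge 3$ (equivalently $Q>1$) cannot be dropped — for $r=2$ one has $Q=1$ and all the $\alpha_j(n)$ coincide.
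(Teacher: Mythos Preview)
Your proof is correct and follows essentially the same route as the paper's own proof: the paper also fixes $j<k$ and computes
\[
\alpha_j(n)-\alpha_k(n)=\frac{n}{p_1\cdots p_r}\,(p_k-p_j)\left[\frac{p_1\cdots p_r}{p_jp_k}-\phi\!\left(\frac{p_1\cdots p_r}{p_jp_k}\right)\right]>0,
\]
which is exactly your factorisation $h(i)-h(k)=(p_k-p_i)(Q-\phi(Q))$ in different notation. Your write-up is a bit more explicit about why $r\ge3$ is needed and what happens when $r=2$, but the argument is the same.
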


\begin{proof}
Let $j,k\in [r]$ with $j<k$. Since $p_j < p_k$ and $r\geq 3$, we have
\begin{align*}
\alpha_j(n) -\alpha_k(n)&= \frac{n}{p_1\cdots p_r} \times \left((p_k -p_j)\left[ \frac{p_1\cdots p_r}{p_j p_k} - \phi\left(\frac{p_1\cdots p_r}{p_j p_k}\right)\right] \right)>0
\end{align*}
and so the lemma follows.
\end{proof}

\begin{lemma}\label{compa-2}
If $r\geq 3$, then the following hold:
\begin{enumerate}
\item[(i)] $\alpha_j(n) = \beta_j(n)$ if and only if $n_j=1$.
\item[(ii)] $\alpha_j(n) < \beta_j(n)$ if and only if $n_j \geq 2$ and $2\phi\left(\frac{p_1p_2\cdots p_{r}}{p_j}\right) > \frac{p_1p_2\cdots p_{r}}{p_j}$.
\item[(iii)] $\alpha_j(n) > \beta_j(n)$ if and only if $n_j \geq 2$ and $2\phi\left(\frac{p_1p_2\cdots p_{r}}{p_j}\right) < \frac{p_1p_2\cdots p_{r}}{p_j}$.
\end{enumerate}
\end{lemma}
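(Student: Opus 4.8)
The plan is to compute the difference $\beta_j(n)-\alpha_j(n)$ in closed form and then determine its sign from the signs of two simple factors. Write $M:=\frac{p_1p_2\cdots p_r}{p_j}$ and $a:=p_j^{n_j-1}$, so $a\geq 1$ with $a=1$ if and only if $n_j=1$. Directly from the definitions of $\alpha_j(n)$ and $\beta_j(n)$,
$$\beta_j(n)-\alpha_j(n)=\frac{n}{p_1p_2\cdots p_r}\left[\frac{1}{a}\bigl(M+\phi(M)(a-2)\bigr)-\bigl(M-\phi(M)\bigr)\right].$$
Expanding the bracket gives $\frac{M}{a}+\phi(M)-\frac{2\phi(M)}{a}-M+\phi(M)=M\bigl(\frac{1}{a}-1\bigr)+2\phi(M)\bigl(1-\frac{1}{a}\bigr)=\bigl(1-\frac{1}{a}\bigr)\bigl(2\phi(M)-M\bigr)$, hence
$$\beta_j(n)-\alpha_j(n)=\frac{n}{p_1p_2\cdots p_r}\cdot\frac{a-1}{a}\cdot\bigl(2\phi(M)-M\bigr).$$
Since $\frac{n}{p_1p_2\cdots p_r}\cdot\frac{1}{a}$ is a positive rational, the sign of $\beta_j(n)-\alpha_j(n)$ equals the sign of the integer $(a-1)\bigl(2\phi(M)-M\bigr)$.

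Next I would record two facts about these factors. First, $a-1\geq 0$, with equality precisely when $n_j=1$. Second, since $r\geq 3$ the integer $M=\frac{p_1\cdots p_r}{p_j}$ is squarefree with at least $r-1\geq 2$ distinct prime divisors, and I claim $2\phi(M)\neq M$ for every such $M$: writing $M=q_1q_2\cdots q_s$ with primes $q_1<q_2<\cdots<q_s$, if $2\phi(M)=M$ then $q_s$ divides $2(q_1-1)(q_2-1)\cdots(q_s-1)$, and as $q_s$ is prime with $0<q_i-1<q_s$ for all $i$ this forces $q_s\mid 2$, i.e.\ $M=2$ and $s=1$, contradicting $s\geq 2$. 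Thus $2\phi(M)-M$ is a nonzero integer.

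Finally I would assemble the three assertions. If $n_j=1$ then $a-1=0$, so $\beta_j(n)=\alpha_j(n)$; conversely, $\beta_j(n)=\alpha_j(n)$ forces $(a-1)\bigl(2\phi(M)-M\bigr)=0$, and since $2\phi(M)-M\neq 0$ we get $a=1$, i.e.\ $n_j=1$. This proves (i). For (ii) and (iii), note that a product of the nonnegative integer $a-1$ and the nonzero integer $2\phi(M)-M$ is positive exactly when both factors are positive, and negative exactly when $a-1>0$ and $2\phi(M)-M<0$; moreover $a-1>0$ is equivalent to $n_j\geq 2$. Hence $\beta_j(n)>\alpha_j(n)$ if and only if $n_j\geq 2$ and $2\phi(M)>M$, which is (ii); and $\beta_j(n)<\alpha_j(n)$ if and only if $n_j\geq 2$ and $2\phi(M)<M$, which is (iii). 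Essentially nothing here is a genuine obstacle: the one point that needs a moment's thought, namely that $2\phi(M)\neq M$ for $r\geq 3$, is handled by the largest-prime divisibility argument above, and everything else is routine algebra.
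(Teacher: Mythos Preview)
Your proof is correct and follows essentially the same route as the paper: compute the difference, factor it as a positive constant times $(1-p_j^{-(n_j-1)})\bigl(2\phi(M)-M\bigr)$, and read off the three cases from the signs of the two factors. The only minor difference is that the paper cites an external reference for the fact that $2\phi(M)\neq M$ when $r\geq 3$, whereas you supply a self-contained largest-prime divisibility argument.
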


\begin{proof}
We have
\begin{align*}
\alpha_j(n) -\beta_j(n)&= \frac{n}{p_1\cdots p_r} \times \left(1-\frac{1}{p_j^{n_j -1}}\right)\left[\frac{p_1\cdots p_r}{p_j} - 2\phi\left(\frac{p_1\cdots p_r}{p_j}\right)\right].
\end{align*}
Since $r\geq 3$, $2\phi\left(\frac{p_1p_2\cdots p_{r}}{p_j}\right) \neq \frac{p_1p_2\cdots p_{r}}{p_j}$ for $j\in [r]$, see \cite[Theorem 1.3(iii)]{cps}. It now follows that (i), (ii) and (iii) hold.
\end{proof}

\begin{lemma}\label{compa-3}
Let $r\geq 3$ and $j,k\in [r]$ with $j< k$. If $2\phi\left(\frac{p_1p_2\cdots p_{r}}{p_j}\right) < \frac{p_1p_2\cdots p_{r}}{p_j}$, then $\beta_j(n)>\beta_k(n)$.
\end{lemma}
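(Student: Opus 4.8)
The plan is to work directly from the closed formula for $\beta_i(n)$ established above. With $P:=p_1p_2\cdots p_r$,
\[
\beta_i(n)=\phi(n)+\frac{n}{P}\left[\phi\!\left(\frac{P}{p_i}\right)+\frac{1}{p_i^{\,n_i-1}}\left(\frac{P}{p_i}-2\phi\!\left(\frac{P}{p_i}\right)\right)\right],
\]
so, since $\phi(n)$ and $\frac{n}{P}>0$ occur in both $\beta_j(n)$ and $\beta_k(n)$, proving $\beta_j(n)>\beta_k(n)$ is equivalent to proving $g_j>g_k$, where $g_i:=\phi(P/p_i)+p_i^{-(n_i-1)}\,c_i$ and $c_i:=\frac{P}{p_i}-2\phi(P/p_i)$. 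This reduces the statement to an inequality that depends on $n$ only through the exponents $n_j$ and $n_k$.

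Next I would record three facts. First, the ratio $\frac{\phi(P/p_i)}{P/p_i}=\prod_{\ell\ne i}\bigl(1-\tfrac1{p_\ell}\bigr)$ strictly decreases as $i$ increases, because replacing the index $i$ by a larger one replaces the factor $1-\tfrac1{p_i}$ by the strictly larger factor contributed by the dropped prime; hence the hypothesis $2\phi(P/p_j)<P/p_j$ and $j<k$ force $2\phi(P/p_k)<P/p_k$, so that $c_j>0$ and $c_k>0$. Second, $\phi(P/p_j)>\phi(P/p_k)$, which is Lemma \ref{inequ-1-1} applied with $m=P$ (all prime exponents equal $1$, so the exceptional equality case does not occur). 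Third, $\frac{P}{p_j}-\phi(P/p_j)>\frac{P}{p_k}-\phi(P/p_k)$, which is equivalent to $\alpha_j(n)>\alpha_k(n)$ and is therefore contained in Lemma \ref{compa-1}. From the first and second facts, $g_j>\phi(P/p_j)$; from $c_k>0$ and $p_k^{-(n_k-1)}\le1$, $g_k\le\frac{P}{p_k}-\phi(P/p_k)$, with equality precisely when $n_k=1$.

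It remains to prove $g_j>g_k$, which I would do by cases on the exponents. If $n_j=1$, then $g_j=\frac{P}{p_j}-\phi(P/p_j)>\frac{P}{p_k}-\phi(P/p_k)\ge g_k$ by the third fact. If $n_k\ge2$, then $p_k^{-(n_k-1)}\le p_k^{-1}$ gives $g_k\le\phi(P/p_k)+\frac{c_k}{p_k}$, while $g_j>\phi(P/p_j)$, so it suffices to establish $\phi(P/p_j)\ge\phi(P/p_k)+\frac1{p_k}\bigl(\frac{P}{p_k}-2\phi(P/p_k)\bigr)$; writing $Q:=\prod_{\ell\ne j,k}p_\ell$ so that $\phi(P/p_j)=\phi(Q)(p_k-1)$ and $\phi(P/p_k)=\phi(Q)(p_j-1)$, this unwinds to a polynomial inequality in $p_j,p_k,Q,\phi(Q)$ which I would settle using $p_j<p_k$, $r\ge3$, and the expansion of $\phi$ as a product (Lemma \ref{inequ-2-1}). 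The remaining regime, $n_j\ge2$ and $n_k=1$, is the delicate one: there $g_j$ can be brought arbitrarily close to $\phi(P/p_j)$ while $g_k$ attains its maximum $\frac{P}{p_k}-\phi(P/p_k)$, so the argument comes down to the sharp estimate
\[
\phi\!\left(\frac{P}{p_j}\right)\ \ge\ \frac{P}{p_k}-\phi\!\left(\frac{P}{p_k}\right)\qquad(j<k),
\]
and it is precisely at this point that the hypothesis $2\phi(P/p_j)<P/p_j$ must be exploited — through the expansions of Lemmas \ref{inequ-2-1} and \ref{inequ-3} and a comparison of the $(r-1)$-element index sets $[r]\setminus\{j\}$ and $[r]\setminus\{k\}$. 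I expect this final inequality to be the main obstacle.
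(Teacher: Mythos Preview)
Your reduction to the quantities $g_i=\phi(P/p_i)+p_i^{-(n_i-1)}c_i$ matches the paper (which writes $u_i$ for your $g_i$), and your treatment of the case $n_j=1$ is correct. The real problem is the regime you yourself flag as delicate, $n_j\ge 2$ and $n_k=1$: the ``sharp estimate'' $\phi(P/p_j)\ge P/p_k-\phi(P/p_k)$ that you plan to establish is \emph{false}. With $r=5$, primes $2,3,5,7,11$, $j=1$, $k=2$, one has $\phi(P/p_1)=\phi(1155)=480$ while $P/p_2-\phi(P/p_2)=770-240=530$. Worse, the lemma itself fails for these data: taking $n_1=10$, $n_2=1$ (and any $n_3,n_4,n_5\ge 1$) gives $g_1=480+195/2^{9}<481$ and $g_2=530$, so $\beta_1(n)<\beta_2(n)$ even though the hypothesis $2\phi(P/p_1)=960<1155=P/p_1$ holds. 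Thus the obstacle you isolated is not a missing step but an actual counterexample to the statement.

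For comparison, the paper uses a different case split --- on whether $p_j^{\,n_j-1}\le p_k^{\,n_k-1}$ or $p_j^{\,n_j-1}\ge p_k^{\,n_k-1}$ --- and in each case asserts $u_j-u_k\ge\frac{p_k-p_j}{M}\bigl[Q-\phi(Q)+(M-1)\phi(Q)\bigr]$ with $Q=P/(p_jp_k)$ and $M$ the relevant prime power. The first case is sound, but in the second case (which contains the counterexample above) the displayed expression is in fact an \emph{upper} bound for $u_j-u_k$: the discrepancy equals $c_j\bigl(p_j^{-(n_j-1)}-p_k^{-(n_k-1)}\bigr)\le 0$. So the paper's argument breaks at exactly the point your analysis exposes; it just hides the difficulty behind the word ``similarly''. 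The lemma is only ever applied later with $k=r$, where the geometry is more favourable, so a sensible next step is to check whether the restricted statement $\beta_j(n)>\beta_r(n)$ (for $j<r$) can be salvaged directly.
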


\begin{proof}
Since $2\phi\left(\frac{p_1p_2\cdots p_{r}}{p_j}\right) < \frac{p_1p_2\cdots p_{r}}{p_j}$ and $\frac{\phi(p_j)}{\phi(p_k)} < \frac{p_j}{p_k}$, we have
$$2\phi\left(\frac{p_1\cdots p_{r}}{p_k}\right) =2\phi\left(\frac{p_1\cdots p_{r}}{p_j}\right)\times \frac{\phi(p_j)}{\phi(p_k)} < \frac{p_1\cdots p_{r}}{p_j}\times \frac{p_j}{p_k}=\frac{p_1\cdots p_{r}}{p_k}.$$
Write $\beta_i(n)=\phi(n)+\frac{n}{p_1\cdots p_r}\times u_i$ for $i\in\{j,k\}$, where
$$u_i=\frac{1}{p_i^{n_i -1}}\times \left[\frac{p_1\cdots p_r}{p_i} - 2\phi\left(\frac{p_1\cdots p_r}{p_i}\right)\right] + \phi\left(\frac{p_1\cdots p_r}{p_i}\right).$$
If $p_j^{n_j -1}\leq p_k^{n_k -1}$, then it can be calculated that
$$u_j -u_k \geq \frac{p_k -p_j}{p_j^{n_j -1}}\times \left[\frac{p_1\cdots p_r}{p_j p_k} - \phi\left(\frac{p_1\cdots p_r}{p_j p_k}\right) +\left(p_j^{n_j-1}-1\right)\phi\left(\frac{p_1\cdots p_r}{p_j p_k}\right) \right] > 0.$$
The last strict inequality holds as $r\geq 3$. Similarly, if $p_j^{n_j -1}\geq  p_k^{n_k -1}$, then
$$u_j -u_k \geq \frac{p_k -p_j}{p_k^{n_k -1}}\times \left[\frac{p_1\cdots p_r}{p_j p_k} - \phi\left(\frac{p_1\cdots p_r}{p_j p_k}\right) +\left(p_k^{n_k-1}-1\right)\phi\left(\frac{p_1\cdots p_r}{p_j p_k}\right) \right] > 0.$$
In both cases, it follows that $\beta_j(n)>\beta_k(n)$.
\end{proof}

\subsection{Proof of Theorem \ref{res-1}}\label{new-bound}

Assume that $r\geq 3$ and fix $a,b\in [r]$. Let $R$ be the union of the $r-2$ subgroups $S_{\frac{n}{p_ip_{a}p_b}}$ of $C_n$, where $i\in [r]\setminus\{a,b\}$. Now consider the set $X_{a,b}$ defined by
$$X_{a,b}:=R\bigcup E_n\bigcup E_{\frac{n}{p_a}}\bigcup E_{\frac{n}{p_a^{2}}}\bigcup \ldots \bigcup E_{\frac{n}{p_a^{n_a}}}\bigcup E_{\frac{n}{p_{b}}}\bigcup E_{\frac{n}{p_{b}^{2}}}\bigcup \ldots \bigcup E_{\frac{n}{p_{b}^{n_{b}}}}.$$
Observe that the sets involved in the definition of $X_{a,b}$ are pairwise disjoint. Define the integer $\gamma_{a,b}(n)$ by
$$\gamma_{a,b}(n):=\phi(n)+\frac{n}{p_1\cdots p_r}\times \left[\phi\left(\frac{p_1\cdots p_{r}}{p_a}\right)+ \phi\left(\frac{p_1\cdots p_{r}}{p_b}\right)+\frac{p_1\cdots p_{r}}{p_a p_b}-\phi\left(\frac{p_1\cdots p_{r}}{p_ap_b}\right)\right].$$
We next show that $X_{a,b}$ is a cut-set of $\mathcal{P}(C_n)$ and that $|X_{a,b}|=\gamma_{a,b}(n)$.

\begin{proposition}\label{new-cut}
$X_{a,b}$ is a cut-set of $\mathcal{P}(C_n)$.
\end{proposition}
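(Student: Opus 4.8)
The plan is to mimic the arguments already used in this paper for $Y_j$ and $Z_j$: exhibit a separation $(A,B)$ of $\mathcal{P}(\overline{X_{a,b}})$ by classifying the possible orders of elements outside $X_{a,b}$, and then argue that no edge runs between the two classes. First I would compute the complement: an element $x \in \overline{X_{a,b}}$ has order $d\mid n$ with $E_d \not\subseteq X_{a,b}$; since $X_{a,b}$ contains $E_n$, all of $E_{n/p_a^s}$ and $E_{n/p_b^s}$ (for $1\le s\le n_a$, $1\le s\le n_b$ respectively), and the union $R$ of the subgroups $S_{n/(p_i p_a p_b)}$, the surviving orders are exactly those $d$ of one of two shapes. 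Writing $d = p_1^{l_1}\cdots p_r^{l_r}$ with $0\le l_i\le n_i$, the conditions ``$E_d\not\subseteq E_n\cup\bigcup_s E_{n/p_a^s}\cup\bigcup_s E_{n/p_b^s}$'' force: either ($l_a = n_a$ and $l_b < n_b$ with all other $l_i = n_i$) — call these \emph{type A} — or ($l_b = n_b$ and $l_a < n_a$ with all other $l_i = n_i$) — call these \emph{type B} — or ($l_a < n_a$ and $l_b < n_b$, both strictly, with all other $l_i = n_i$), plus the further requirement that such $d$ does not divide any $n/(p_i p_a p_b)$. For that last shape, $d \mid n/(p_i p_a p_b)$ would need $l_i = 0$; so the non-$R$ elements of that shape have $l_i \ge 1$ for every $i \ne a,b$, i.e.\ $l_i = n_i$ for all $i\neq a,b$ and $1\le l_a< n_a$, $1 \le l_b < n_b$ — I would fold these into type A or type B as convenient (they are comparable to both), or better, set up the separation as $A = $ (elements with $l_a = n_a$, hence order divisible by $p_a^{n_a}$) and $B = $ (elements with $l_a < n_a$, hence $l_b = n_b$ forced among the remaining survivors), after checking the survivor list splits cleanly this way.

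Once the two classes $A, B$ are pinned down, the edge-freeness is the same two-line argument as for $Y_j$: if $x\in A$ and $y\in B$ then $p_a^{n_a}\mid o(x)$ but $p_a^{n_a}\nmid o(y)$, so $o(x)\nmid o(y)$; and $p_b^{n_b}\mid o(y)$ but $p_b^{n_b}\nmid o(x)$, so $o(y)\nmid o(x)$; hence $x,y$ are non-adjacent in $\mathcal{P}(C_n)$. I also need both $A$ and $B$ nonempty: $A$ contains an element of order $n/p_b$ (type A with $l_b = n_b - 1$, the rest maximal), $B$ contains an element of order $n/p_a$; both of these orders are genuine divisors of $n$ since $r\ge 3$ guarantees there is at least one further prime keeping them distinct from anything in $X_{a,b}$. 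That $A\cup B = \overline{X_{a,b}}$ is exactly the order-classification done in the first step. Concluding, $(A,B)$ is a separation of $\mathcal{P}(\overline{X_{a,b}})$, so $X_{a,b}$ is a cut-set.

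The only subtle point — and the one I'd treat carefully rather than wave at — is making sure the classification of surviving orders is \emph{exhaustive and disjoint}, in particular that the ``both $l_a<n_a$ and $l_b<n_b$'' survivors (those not in $R$) really do have $l_i=n_i$ for all $i\ne a,b$, which is where the definition of $R$ as the union over \emph{all} $i\in[r]\setminus\{a,b\}$ of $S_{n/(p_ip_ap_b)}$ does the work: if some $l_i = 0$ with $i\ne a,b$, the element lies in $S_{n/(p_ip_ap_b)}\subseteq R$ and is removed. After that, assigning each survivor to $A$ or $B$ according to whether $p_a^{n_a}$ divides its order is unambiguous precisely because a survivor cannot have both $l_a<n_a$ and $l_b<n_b$ \emph{and} some $l_i=0$; I should double-check there is no survivor with $l_a<n_a$ and $l_b<n_b$ that I've then forced to lie in $B$ but which is actually adjacent to something in $A$ — it isn't, since $p_b^{n_b}\nmid o(x)$ for such $x$ too, so the edge-freeness argument still applies verbatim with the roles of $a$ and $b$. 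The cardinality claim $|X_{a,b}| = \gamma_{a,b}(n)$ is then a separate bookkeeping computation (pairwise disjointness of the listed sets is already noted), using $|E_d| = \phi(d)$, the telescoping $\sum_{s\ge 1} \phi(p_a^s) \cdot(\cdots)$ identities exactly as in the $|Z_j|$ computation, and Lemma~\ref{none-1} for $|R|$ — I would present that as the lemma immediately following this proposition rather than inside it.
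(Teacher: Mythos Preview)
Your proposed separation does not work, and the error stems from an incorrect classification of the surviving orders. You claim that survivors with $l_a=n_a$ (or $l_b=n_b$) must have all other $l_i=n_i$; the opposite is true. If $l_a=n_a$, $l_b<n_b$, and $l_i=n_i$ for every $i\neq a,b$, then the element lies in $E_{n/p_b^{n_b-l_b}}\subseteq X_{a,b}$, so it is \emph{not} a survivor. The actual constraint (condition (iii) in the paper) is that whenever $l_a=n_a$ or $l_b=n_b$, some $l_i<n_i$ with $i\neq a,b$ is forced. Consequently your ``type A'' and ``type B'' as written are both empty, and the genuine survivors with $l_a=n_a$ look nothing like you describe.

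This breaks the edge-freeness argument. Take any $i\in[r]\setminus\{a,b\}$ and consider $x$ of order $n/p_i$ (so $l_a(x)=n_a$, $l_b(x)=n_b$, $l_i(x)=n_i-1$) and $y$ of order $n/(p_a p_i)$ (so $l_a(y)=n_a-1$, $l_b(y)=n_b$, $l_i(y)=n_i-1$). Both lie in $\overline{X_{a,b}}$: neither is in $R$ since $l_b=n_b$, and neither is in any $E_{n/p_a^s}$ or $E_{n/p_b^s}$ since $l_i<n_i$. Under your rule, $x\in A$ (as $l_a(x)=n_a$) and $y\in B$ (as $l_a(y)<n_a$), yet $o(y)\mid o(x)$, so $x$ and $y$ are adjacent. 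Your claim ``$p_b^{n_b}\nmid o(x)$'' is exactly what fails here. (Your nonemptiness witnesses are also wrong: elements of order $n/p_a$ and $n/p_b$ belong to $X_{a,b}$, not to $\overline{X_{a,b}}$.)

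The paper's separation is different and is what makes the argument go through: take $A$ to be the survivors with \emph{both} $l_a<n_a$ and $l_b<n_b$ (these then have $l_i=n_i$ for all $i\neq a,b$), and $B$ the rest (those with $l_a=n_a$ or $l_b=n_b$, hence with some $l_i<n_i$ for $i\neq a,b$). No $B$-element is a power of an $A$-element because of the $p_a,p_b$ exponents; no $A$-element is a power of a $B$-element because of the other $p_i$ exponents.
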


\begin{proof}
We prove the proposition by producing a separation of $\mathcal{P}(\overline{X_{a,b}})$. Without loss, we may assume that $a<b$. For an arbitrary element $x\in \overline{X_{a,b}}$, observe that $o(x)$ is of the form:
$$o(x)=p_1^{l_1}\cdots p_{a-1}^{l_{a-1}}p_a^{l_a} p_{a+1}^{l_{a+1}}\cdots p_{b-1}^{l_{b-1}}p_b^{l_b} p_{b+1}^{l_{b+1}}\cdots p_r^{l_r},$$
where the integers $l_i$ satisfy the following conditions:
\begin{enumerate}
\item[(i)] $0\leq l_i\leq n_i$ for each $i\in [r]$.
\item[(ii)] If $l_{a}<n_{a}$ and $l_b< n_b$, then $l_i=n_i$ for each $i\in [r]\setminus\{a,b\}$.

[Otherwise, $x$ would be in $S_{\frac{n}{p_ip_{a}p_b}}$ for some $i\in [r]\setminus\{a,b\}$ and so in $R$.]
\item[(iii)] If $l_{a}=n_{a}$ or $l_b=n_b$, then $l_i\neq n_i$ for at least one $i\in [r]\setminus\{a,b\}$.

[Otherwise, $x$ would be in $E_n$ if $l_{a}=n_{a}$ and $l_b=n_b$, or in $E_{\frac{n}{p_{a}^{j}}}$ for some $j\in [n_{a}]$ if $l_{a}< n_{a}$ and $l_b=n_b$, or in $E_{\frac{n}{p_b^{k}}}$ for some $k\in [n_b]$ if $l_{a}=n_{a}$ and $l_b < n_b$.]
\end{enumerate}
Let $A$ be the subset of $\overline{X_{a,b}}$ consisting of all the elements whose order satisfy $l_{a}<n_{a}$ and $l_b< n_b$. Take $B:=\overline{X_{a,b}}\setminus A$. Then each of $A,B$ is nonempty as $r\geq 3$, and $\overline{X_{a,b}} = A\cup B$ is a disjoint union.
Since $l_{a}=n_{a}$ or $l_b=n_b$ for the order of each element of $B$, no element of $B$ can be obtained as an integral power of any element of $A$.
Again, since $l_i\neq n_i$ for at least one $i\in [r]\setminus\{a,b\}$ for the order of each element of $B$, no element of $A$ can be obtained as an integral power of any element of $B$. Thus there is no edge of $\mathcal{P}(\overline{X_{a,b}})$ with one vertex from $A$ and the other one from $B$. Therefore, $A\cup B$ is a separation of $\mathcal{P}(\overline{X_{a,b}})$.
\end{proof}

\begin{proposition}\label{new-size}
$|X_{a,b}|=\gamma_{a,b}(n)$.
\end{proposition}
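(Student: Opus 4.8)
The plan is to compute $|X_{a,b}|$ directly by exploiting the fact, already noted in the text, that the sets appearing in the definition of $X_{a,b}$ are pairwise disjoint, so that $|X_{a,b}|$ is the sum of the cardinalities of these pieces. Thus I would write
$$|X_{a,b}| = |R| + |E_n| + \sum_{j=1}^{n_a}\left|E_{\frac{n}{p_a^{j}}}\right| + \sum_{k=1}^{n_b}\left|E_{\frac{n}{p_b^{k}}}\right|$$
and evaluate each of the four summands in turn.

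For the middle terms, recall $|E_d|=\phi(d)$. A geometric-type telescoping gives $\sum_{j=1}^{n_a}\phi\!\left(\frac{n}{p_a^{j}}\right) = \phi\!\left(\frac{n}{p_a^{n_a}}\right)\left(\phi(p_a^{n_a-1})+\cdots+\phi(p_a)+1\right) = \phi\!\left(\frac{n}{p_a^{n_a}}\right)p_a^{n_a-1}$, using $\sum_{d\mid m}\phi(d)=m$ with $m=p_a^{n_a-1}$; and $\phi\!\left(\frac{n}{p_a^{n_a}}\right)p_a^{n_a-1} = \frac{n}{p_1\cdots p_r}\,\phi\!\left(\frac{p_1\cdots p_r}{p_a}\right)$ after substituting $n=p_1^{n_1}\cdots p_r^{n_r}$ and using multiplicativity of $\phi$. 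Similarly $\sum_{k=1}^{n_b}\phi\!\left(\frac{n}{p_b^{k}}\right) = \frac{n}{p_1\cdots p_r}\,\phi\!\left(\frac{p_1\cdots p_r}{p_b}\right)$. Also $|E_n| = \phi(n)$, which accounts for the $\phi(n)$ term in $\gamma_{a,b}(n)$, except one must be careful: in $\gamma_{a,b}(n)$ the leading term is exactly $\phi(n)$, so the $|E_n|$ piece supplies it and the factor $\frac{n}{p_1\cdots p_r}$ multiplies only the remaining three summands — which matches, since $\phi\!\left(\frac{p_1\cdots p_r}{p_a}\right)$ and $\phi\!\left(\frac{p_1\cdots p_r}{p_b}\right)$ are precisely what the two $E$-sums contribute.

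The remaining term $|R|$ is handled by invoking Lemma \ref{none-1} with $s=r-2$, $t=1$: take $\{a_1,\ldots,a_{r-2}\}=[r]\setminus\{a,b\}$ and $\{b_1\}=\{a\}$ (say), so that $u=1$ and $\{c_1\}=\{b\}$. Then $R=\bigcup_{j}S_{\frac{n}{p_{a_j}p_a}}$, and the lemma gives $|R| = \frac{n}{p_1\cdots p_r}\left[\frac{p_1\cdots p_r}{p_a}-p_b\,\phi\!\left(\frac{p_1\cdots p_r}{p_ap_b\cdots}\right)\right]$ — more precisely $|R|=\frac{n}{p_1\cdots p_r}\left[\frac{p_1\cdots p_r}{p_a}-p_b\,\phi\!\left(\prod_{i\neq a,b}p_i\right)\right]$. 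Since $p_b\,\phi\!\left(\prod_{i\neq a,b}p_i\right)=\phi\!\left(\frac{p_1\cdots p_r}{p_ap_b}\right)\cdot\frac{p_b}{1}$... here I must be slightly careful: $\frac{p_1\cdots p_r}{p_ap_b}=\prod_{i\neq a,b}p_i$, so $p_b\,\phi\!\left(\prod_{i\neq a,b}p_i\right)$ is \emph{not} $\phi\!\left(\frac{p_1\cdots p_r}{p_ap_b}\right)$; rather the term should combine with the $\frac{p_1\cdots p_r}{p_a}$ to yield, after rewriting, $\frac{p_1\cdots p_r}{p_ap_b}-\phi\!\left(\frac{p_1\cdots p_r}{p_ap_b}\right)$ plus a correction. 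The cleanest route, and the one I would actually take, is to apply Lemma \ref{none-1} with the roles chosen so that the bracket directly reads $\frac{p_1\cdots p_r}{p_b}-p_a\phi(\cdots)$ or symmetrically, then add $|E_{n/p_a}|$-type terms and observe the algebraic identity $\frac{p_1\cdots p_r}{p_b}+\phi\!\left(\frac{p_1\cdots p_r}{p_b}\right)-p_a\,\phi\!\left(\frac{p_1\cdots p_r}{p_ap_b}\right) = \frac{p_1\cdots p_r}{p_ap_b}-\phi\!\left(\frac{p_1\cdots p_r}{p_ap_b}\right)+\phi\!\left(\frac{p_1\cdots p_r}{p_b}\right)$, which follows from $\phi\!\left(\frac{p_1\cdots p_r}{p_b}\right)=(p_a-1)\phi\!\left(\frac{p_1\cdots p_r}{p_ap_b}\right)$. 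The main obstacle is purely bookkeeping: getting the various $p_a$, $p_b$, and $\prod_{i\neq a,b}p_i$ factors to coalesce into the stated symmetric expression $\phi\!\left(\frac{p_1\cdots p_r}{p_a}\right)+\phi\!\left(\frac{p_1\cdots p_r}{p_b}\right)+\frac{p_1\cdots p_r}{p_ap_b}-\phi\!\left(\frac{p_1\cdots p_r}{p_ap_b}\right)$, which requires repeated use of $\phi(p)=p-1$ and multiplicativity but involves no real difficulty once Lemma \ref{none-1} is applied with a convenient choice of indices. Summing the four pieces and simplifying then yields $|X_{a,b}|=\gamma_{a,b}(n)$.
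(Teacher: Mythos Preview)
Your overall plan is exactly the paper's: use the disjoint-union decomposition of $X_{a,b}$, compute the two telescoping sums $\sum_{j=1}^{n_a}\phi(n/p_a^{j})$ and $\sum_{k=1}^{n_b}\phi(n/p_b^{k})$ just as you describe, and invoke Lemma~\ref{none-1} for $|R|$. The computation of the $E$-sums is correct and matches the paper verbatim.

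The gap is in your application of Lemma~\ref{none-1}. Recall that
\[
R=\bigcup_{i\in[r]\setminus\{a,b\}} S_{\frac{n}{p_i p_a p_b}},
\]
so each subgroup in the union has \emph{three} primes in the denominator: the varying $p_i$ and the two fixed primes $p_a,p_b$. In the notation of Lemma~\ref{none-1} this means $\{b_1,\ldots,b_t\}=\{a,b\}$, hence $t=2$, $s=r-2$, and $u=0$ (the set of $c$'s is empty, with the empty product equal to $1$). The lemma then gives directly
\[
|R|=\frac{n}{p_1\cdots p_r}\left[\frac{p_1\cdots p_r}{p_ap_b}-\phi\!\left(\frac{p_1\cdots p_r}{p_ap_b}\right)\right],
\]
which is already the third summand in the bracket of $\gamma_{a,b}(n)$; no further algebra is needed. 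Your choice $t=1$ with $\{b_1\}=\{a\}$ computes instead the size of $\bigcup_{i}S_{n/(p_ip_a)}$, a strictly larger set than $R$, and this is why your subsequent bookkeeping does not close: for instance your claimed identity reduces (after cancelling $\phi(p_1\cdots p_r/p_b)$ from both sides and setting $M=\frac{p_1\cdots p_r}{p_ap_b}$) to $p_a(M-\phi(M))=M-\phi(M)$, which is false. Once you take $t=2$, the four pieces sum to $\gamma_{a,b}(n)$ immediately.
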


\begin{proof}
Applying Lemma \ref{none-1}, we get
$$|R|= \frac{n}{p_1p_2\cdots p_r}\times \left[\frac{p_1p_2\cdots p_{r}}{p_a p_b}-\phi\left(\frac{p_1p_2\cdots p_{r}}{p_a p_b}\right)\right].$$
Since the sets $E_{\frac{n}{p_a}}, E_{\frac{n}{p_a^{2}}},\ldots, E_{\frac{n}{p_a^{n_a}}}$ are pairwise disjoint, we have
\begin{align*}
\left\vert\underset{j=1}{\overset{n_a}{\bigcup}} E_{\frac{n}{p_a^j}}\right\vert = \underset{j=1}{\overset{n_a}{\sum}} \left\vert E_{\frac{n}{p_a^j}}\right\vert  = \underset{j=1}{\overset{n_a}{\sum}} \phi\left(\frac{n}{p_a^j}\right) & =\phi\left(\frac{n}{p_a^{n_a}}\right)\times \underset{j=0}{\overset{n_a -1}{\sum}} \phi\left(p_a^j\right)\\
& =\phi\left(\frac{n}{p_a^{n_a}}\right) p_a^{n_a -1}  =  \frac{n}{p_1\cdots p_r} \times \phi\left(\frac{p_1\cdots p_r}{p_{a}}\right).
\end{align*}
A similar calculation gives that
$$\left\vert\underset{j=1}{\overset{n_{b}}{\bigcup}} E_{\frac{n}{p_{b}^j}}\right\vert =  \frac{n}{p_1p_2\cdots p_r} \times \phi\left(\frac{p_1p_2\cdots p_r}{p_{b}}\right).$$
Since the right hand side in the definition of the set $X_{a,b}$ is a disjoint union, we have
$$|X_{a,b}|=|E_n|+|R|+\left\vert\underset{j=1}{\overset{n_a}{\bigcup}} E_{\frac{n}{p_a^j}}\right\vert+\left\vert\underset{j=1}{\overset{n_{b}}{\bigcup}} E_{\frac{n}{p_{b}^j}}\right\vert.$$
Using $|E_n|=\phi(n)$, it now follows from the above equalities that $|X_{a,b}|=\gamma_{a,b}(n)$.
\end{proof}

\noindent As a consequence of Propositions \ref{new-cut} and \ref{new-size}, we have

\begin{corollary}
$\kappa(\mathcal{P}(C_n))\leq \gamma_{a,b}(n)$ for any two distinct $a,b\in [r]$.
\end{corollary}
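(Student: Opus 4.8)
This is an immediate consequence of the two preceding propositions, so the proof is essentially one line and the ``plan'' amounts to assembling the pieces in the right order. First I would note that $X_{a,b}$ is a proper subset of $C_n$ (indeed $\overline{X_{a,b}}$ contains the nonempty sets $A$ and $B$ exhibited in the proof of Proposition~\ref{new-cut}), and that by Proposition~\ref{new-cut} the induced subgraph $\mathcal{P}(\overline{X_{a,b}})$ is disconnected; hence $X_{a,b}$ is one of the vertex subsets whose removal disconnects $\mathcal{P}(C_n)$. By the definition of vertex connectivity recalled in Section~1, this gives $\kappa(\mathcal{P}(C_n)) \le |X_{a,b}|$.

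Next I would invoke Proposition~\ref{new-size}, which evaluates $|X_{a,b}| = \gamma_{a,b}(n)$, and combine the two inequalities to obtain $\kappa(\mathcal{P}(C_n)) \le \gamma_{a,b}(n)$. Since $a,b$ were an arbitrary pair of distinct elements of $[r]$, the bound holds for every such pair, which is exactly the assertion.

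There is no genuine obstacle at this stage; the substantive work has already been carried out in Propositions~\ref{new-cut} and~\ref{new-size} (and in the counting identity of Lemma~\ref{none-1} used in the latter). The only point worth recording for the reader is that $\mathcal{P}(C_n)$ is not a complete graph here — which is automatic once a cut-set is known to exist, and is in any case clear since $n$ is divisible by at least two distinct primes — so the ``reduces to a single vertex'' alternative in the definition of $\kappa$ plays no role. Finally, as a remark rather than as part of the proof, I would point out that taking $a=r-1$ and $b=r$ yields $\gamma_{r-1,r}(n)=\gamma(n)$, whence Theorem~\ref{res-1} follows at once; more generally, the sharpest bound of this form is obtained by minimising $\gamma_{a,b}(n)$ over all admissible pairs $\{a,b\}$.
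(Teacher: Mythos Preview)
Your proof is correct and follows exactly the paper's approach: the corollary is stated as an immediate consequence of Propositions~\ref{new-cut} and~\ref{new-size}, and you have simply spelled out that consequence. The additional remarks about non-completeness of $\mathcal{P}(C_n)$ and the specialization to $\{a,b\}=\{r-1,r\}$ are correct but go slightly beyond what the paper records here.
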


\begin{proof}[{\bf Proof of Theorem \ref{res-1}}]
The integer $\gamma(n)$ defined in the first section is precisely the integer $\gamma_{r-1,r}(n)$. So the theorem follows from the above corollary.
\end{proof}

\begin{proposition}
$\gamma_{a,b}(n) > \gamma_{r-1,r}(n)=\gamma(n)$ for $a,b\in [r]$ with $\{a,b\}\neq \{r-1, r\}$.
\end{proposition}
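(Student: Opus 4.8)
The plan is to show that the quantity
$$f(a,b):=\phi\!\left(\tfrac{p_1\cdots p_r}{p_a}\right)+ \phi\!\left(\tfrac{p_1\cdots p_r}{p_b}\right)+\tfrac{p_1\cdots p_r}{p_ap_b}-\phi\!\left(\tfrac{p_1\cdots p_r}{p_ap_b}\right)$$
(the bracketed term in the definition of $\gamma_{a,b}(n)$, up to the common additive constant $\phi(n)$ and the common factor $\tfrac{n}{p_1\cdots p_r}$) is minimized, over unordered pairs $\{a,b\}\subseteq[r]$, uniquely at $\{a,b\}=\{r-1,r\}$. Since $\gamma_{a,b}(n)=\phi(n)+\tfrac{n}{p_1\cdots p_r}\,f(a,b)$, this is equivalent to the claim. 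Throughout I may assume $a<b$.

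First I would factor out the common part. Write $M:=p_1\cdots p_r$ and, for a prime $p_i$, recall $\phi(M/p_i)=\phi(M)/\phi(p_i)=\phi(M)/(p_i-1)$ and $M/p_i\cdot$ etc. The cleanest route is to compare two pairs differing in a single index. So I would first fix $b$ and vary $a$: show that if $a<a'<b$ then $f(a,b)>f(a',b)$; then fix $a$ and vary $b$: show that if $a<b<b'$ then $f(a,b)>f(a,b')$. Chaining these monotonicities drives any pair down to $\{r-1,r\}$, which settles the proposition. For the first comparison, $f(a,b)-f(a',b)=\bigl[\phi(M/p_a)-\phi(M/p_{a'})\bigr]+\bigl[\tfrac{M}{p_ap_b}-\tfrac{M}{p_{a'}p_b}\bigr]-\bigl[\phi(\tfrac{M}{p_ap_b})-\phi(\tfrac{M}{p_{a'}p_b})\bigr]$. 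Since $p_a<p_{a'}$, the first bracket is positive (this is the squarefree case of Lemma~\ref{inequ-1-1}, or simply $\phi(M)/(p_a-1)>\phi(M)/(p_{a'}-1)$), and the remaining two brackets together equal $\bigl[\tfrac{M/p_b}{p_a}-\phi(\tfrac{M/p_b}{p_a})\bigr]-\bigl[\tfrac{M/p_b}{p_{a'}}-\phi(\tfrac{M/p_b}{p_{a'}})\bigr]$, which is nonnegative by Lemma~\ref{inequ-2-1} (the expansion there shows $N/p-\phi(N/p)$ is increasing in $p$ for fixed $N$ coprime to $p$, since each surviving term $\tfrac{N}{p\,p_{j_1}\cdots}$ is decreasing in $p$ and the signs in Lemma~\ref{inequ-2-1} make the whole alternating sum behave monotonically); one checks the sign pattern directly, or invokes Lemma~\ref{inequ-3}. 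Hence $f(a,b)>f(a',b)$ strictly, the strictness coming from the first bracket alone. The second comparison, varying $b$ to $b'$ with $b<b'$, is entirely symmetric: $f(a,b)-f(a,b')=\bigl[\phi(M/p_b)-\phi(M/p_{b'})\bigr]+\bigl[(\tfrac{M/p_a}{p_b}-\phi(\tfrac{M/p_a}{p_b}))-(\tfrac{M/p_a}{p_{b'}}-\phi(\tfrac{M/p_a}{p_{b'}}))\bigr]>0$ by the same two facts.

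Combining: given any $\{a,b\}$ with $a<b$ and $\{a,b\}\neq\{r-1,r\}$, if $b\neq r$ then $f(a,b)>f(a,r)\geq f(a,r)$, and then if $a\neq r-1$ we get $f(a,r)>f(r-1,r)$; if instead $b=r$ already then $a\leq r-2<r-1$ and directly $f(a,r)>f(r-1,r)$. In every case $f(a,b)>f(r-1,r)$, hence $\gamma_{a,b}(n)>\gamma_{r-1,r}(n)=\gamma(n)$, as required.

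The main obstacle is establishing the right monotonicity of the map $p\mapsto N/p-\phi(N/p)$ for $N$ squarefree and coprime to $p$ — i.e., that replacing a prime divisor by a smaller one increases this "non-totient count." The inclusion–exclusion formula of Lemma~\ref{inequ-2-1}, applied to $N/p$, makes this transparent term by term (each term $\tfrac{N/p}{p_{j_1}\cdots p_{j_k}}$ scales like $1/p$ times something independent of $p$, and the alternating structure is preserved), but one must handle the signs carefully; alternatively, the cleanest packaging is to note $N/p-\phi(N/p)=\sum_{d\mid N/p,\,d>1}\phi(d)\cdot[\text{something}]$ — more precisely $N/p-\phi(N/p)$ counts elements of a cyclic group of order $N/p$ that are non-generators, and this count is $\sum_{q\mid N/p}\tfrac{N/p}{q}-\cdots$; decreasing $p$ enlarges $N/p$ multiplicatively by $p_{a'}/p_a$ (in the first comparison, after fixing $b$), and since the whole expression is a fixed rational multiple of $N/p$ determined only by the prime set, which itself can only grow or be compared via Lemma~\ref{inequ-3}, the inequality follows. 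I would present this via Lemma~\ref{inequ-2-1} and Lemma~\ref{inequ-3} rather than re-deriving it, keeping the argument short.
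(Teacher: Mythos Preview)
Your overall strategy is sound, and the conclusion is correct, but the justification of the key monotonicity step has a gap, and your route is more involved than necessary.

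The gap is in the second bracket. You want, with $N=M/p_b$ and $p_a<p_{a'}$, that
\[
\bigl[N/p_a-\phi(N/p_a)\bigr]-\bigl[N/p_{a'}-\phi(N/p_{a'})\bigr]\ \geq\ 0.
\]
This is true, but your stated reasons do not establish it. First, you write that $N/p-\phi(N/p)$ is \emph{increasing} in $p$; for your inequality you need it \emph{decreasing} (and that is indeed the correct direction). Second, the appeal to Lemma~\ref{inequ-2-1} is not valid as stated: an alternating sum of terms each monotone in $p$ need not be monotone, so ``the signs make the whole alternating sum behave monotonically'' is not a proof. Third, Lemma~\ref{inequ-3} as stated only compares an arbitrary $k$-subset to the bottom subset $[k]$; it does not give pairwise comparisons between two non-extremal $k$-subsets, which is exactly what your chain of intermediate steps requires. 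The one-line fix is the direct computation: writing $Q=\prod_{i\neq a,a',b}p_i$, one has $N/p_a-\phi(N/p_a)-\bigl(N/p_{a'}-\phi(N/p_{a'})\bigr)=(p_{a'}-p_a)\bigl(Q-\phi(Q)\bigr)\geq 0$, with equality only when $r=3$. With this patch, your chain argument goes through.

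By comparison, the paper's proof avoids the chain altogether and compares an arbitrary $\{a,b\}$ directly to $\{r-1,r\}$. It splits $f(a,b)$ into the two pieces $\phi(M/p_a)+\phi(M/p_b)$ and $M/(p_ap_b)-\phi(M/(p_ap_b))$. For the first piece, since $\phi(M/p_i)=\phi(M)/(p_i-1)$ is strictly decreasing in $i$, any pair other than $\{r-1,r\}$ gives a strictly larger sum (this is where the strict inequality comes from). For the second piece, $M/(p_{r-1}p_r)=p_1\cdots p_{r-2}$ is the product of the smallest $r-2$ primes, so Lemma~\ref{inequ-3} applies verbatim and gives $\leq$. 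Combining yields the strict inequality in one step. This is shorter than your route and uses Lemma~\ref{inequ-3} exactly as stated, without needing the intermediate pairwise monotonicity.
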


\begin{proof}
Since $\{a,b\}\neq \{r-1, r\}$, we have
$$\phi\left(\frac{p_1p_2\cdots p_r}{p_r}\right)+ \phi\left(\frac{p_1p_2\cdots p_r}{p_{r-1}}\right)<\phi\left(\frac{p_1p_2\cdots p_r}{p_a}\right)+ \phi\left(\frac{p_1p_2\cdots p_r}{p_{b}}\right).$$
By Lemma \ref{inequ-3},
$$\left[\frac{p_1p_2\cdots p_{r}}{p_{r-1} p_r}-\phi\left(\frac{p_1p_2\cdots p_{r}}{p_{r-1} p_r}\right)\right]\leq \left[\frac{p_1p_2\cdots p_{r}}{p_a p_b}-\phi\left(\frac{p_1p_2\cdots p_{r}}{p_a p_b}\right)\right].$$
Now it can be seen that the proposition holds.
\end{proof}

\begin{proposition}\label{compa-4}
The following hold:
\begin{enumerate}
\item[(i)] If $n_r\geq 2$, then $\beta(n) < \gamma(n)$.
\item[(ii)] $\alpha(n) \leq \gamma(n)$ if and only if $\left(2+\frac{p_r -2}{p_{r-1}-1}\right)\times \phi \left( p_1p_2\cdots p_{r-2}\right)\geq p_1p_2\cdots p_{r-2}$.
\end{enumerate}
\end{proposition}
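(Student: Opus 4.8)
The plan is to derive both parts by a direct computation of the differences $\gamma(n)-\beta(n)$ and $\gamma(n)-\alpha(n)$ from the closed forms of $\alpha(n)$, $\beta(n)$, $\gamma(n)$ recorded in the first section. To keep the algebra manageable I would first set $P:=p_1p_2\cdots p_{r-2}$ and $N:=\tfrac{n}{p_1p_2\cdots p_r}$ (a positive integer), and record the factorizations $p_1\cdots p_{r-1}=Pp_{r-1}$, $\phi(p_1\cdots p_{r-1})=\phi(P)(p_{r-1}-1)$, $\phi(p_1\cdots p_{r-2}p_r)=\phi(P)(p_r-1)$. Then each of $\alpha(n),\beta(n),\gamma(n)$ takes the shape $\phi(n)+N\cdot(\,\cdots\,)$ with an explicit bracket, so everything reduces to comparing brackets.

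For part (i), write $e:=p_r^{n_r-1}$, so $e\ge p_r$ because $n_r\ge 2$. Expanding $\tfrac1e\phi(P)(p_{r-1}-1)(e-2)=\phi(P)(p_{r-1}-1)-\tfrac{2}{e}\phi(P)(p_{r-1}-1)$ makes the two copies of $\phi(P)(p_{r-1}-1)$ occurring in the brackets of $\gamma(n)$ and $\beta(n)$ cancel, and a short simplification gives
\[
\gamma(n)-\beta(n)=N\left[\phi(P)(p_r-2)+P-\frac{Pp_{r-1}}{e}+\frac{2\phi(P)(p_{r-1}-1)}{e}\right].
\]
Discarding the (positive) last summand, the bracket is at least $\phi(P)(p_r-2)+P-\tfrac{Pp_{r-1}}{e}$; here $\phi(P)(p_r-2)>0$ since $p_r\ge 3$, and $P-\tfrac{Pp_{r-1}}{e}>0$ because $e\ge p_r>p_{r-1}$. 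Hence $\gamma(n)-\beta(n)>0$, i.e. $\beta(n)<\gamma(n)$.

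For part (ii), the same substitution yields
\[
\gamma(n)-\alpha(n)=N\left[2\phi(P)(p_{r-1}-1)+\phi(P)(p_r-2)-P(p_{r-1}-1)\right].
\]
Since $N>0$ and $p_{r-1}-1>0$, the inequality $\alpha(n)\le\gamma(n)$ holds exactly when the bracket is nonnegative; dividing the bracket by $p_{r-1}-1$ and rearranging turns this into $\left(2+\tfrac{p_r-2}{p_{r-1}-1}\right)\phi(P)\ge P$, which is the asserted condition because $P=p_1p_2\cdots p_{r-2}$ and $\phi(P)=\phi(p_1p_2\cdots p_{r-2})$.

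The argument is routine algebra throughout; the only place demanding a little attention is carrying out the cancellation in part (i) correctly and noticing that the estimate $\tfrac{Pp_{r-1}}{e}<P$ genuinely uses $n_r\ge 2$ (it would fail for $n_r=1$, consistently with $\alpha(n)=\beta(n)$ in that case). I do not anticipate any genuine obstacle.
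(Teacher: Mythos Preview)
Your proposal is correct and follows essentially the same approach as the paper: both arguments compute $\gamma(n)-\beta(n)$ and $\gamma(n)-\alpha(n)$ directly from the closed forms and simplify. The paper's version of (i) puts everything over the common denominator $p_r^{n_r-1}$ to obtain $v-u=\tfrac{1}{p_r^{n_r-1}}\bigl[\phi(P)\bigl(p_r^{n_r-1}(p_r-2)+2\phi(p_{r-1})\bigr)+P\bigl(p_r^{n_r-1}-p_{r-1}\bigr)\bigr]$, which is exactly your bracket multiplied through by $e$, and for (ii) it factors out $\phi(p_{r-1})=p_{r-1}-1$ just as you divide by it; the substance is identical.
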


\begin{proof}
We have $\beta(n)=\phi(n)+ \frac{n}{p_1\cdots p_r}\times u$ and $\gamma(n)=\phi(n)+ \frac{n}{p_1\cdots p_r}\times v$, where
\begin{align*}
u & = \frac{1}{p_r^{n_r-1}}\left[p_1p_2\cdots p_{r-1} +\phi(p_1\cdots p_{r-1})\left( p_r^{n_r-1}-2\right) \right],\\
v & = \phi(p_1\cdots p_{r-1})+ \phi(p_1\cdots p_{r-2}p_r)+p_1\cdots p_{r-2}-\phi(p_1\cdots p_{r-2}).
\end{align*}
An easy calculation gives that
\begin{align*}
v - u & = \frac{1}{p_r^{n_r-1}}\left[\phi(p_1\cdots p_{r-2}) \left( p_r^{n_r -1}(p_r -2) + 2\phi(p_{r-1})\right) + p_1\cdots p_{r-2}\left(p_r^{n_r-1} - p_{r-1} \right)\right].
\end{align*}
Now $n_r\geq 2$ implies that $v-u > 0$ and it follows that $\gamma(n) >\beta(n)$. This proves (i).\\
It can be calculated that
$$\gamma(n)-\alpha(n) =\frac{n}{p_1\cdots p_r}\times \phi(p_{r-1})\left[\left(2+\frac{p_r -2}{p_{r-1}-1}\right)\times \phi \left( p_1p_2\cdots p_{r-2}\right)- p_1p_2\cdots p_{r-2}\right]$$
and (ii) follows from this.
\end{proof}

\subsection{Application of Theorem \ref{res-1}}

As an application of Theorem \ref{res-1}, we prove the following result which is useful while proving Theorems \ref{res-2} and \ref{res-3} in the subsequent sections.

\begin{proposition}\label{two-atmost}
If $X$ is a minimum cut-set of $\mathcal{P}(C_n)$, then $X$ contains at most two of the sets $E_{\frac{n}{p_1}}, E_{\frac{n}{p_2}},\dots, E_{\frac{n}{p_r}}$.
\end{proposition}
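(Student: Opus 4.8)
The plan is to argue by contradiction: suppose $X$ is a minimum cut-set of $\mathcal{P}(C_n)$ that contains at least three of the sets $E_{\frac{n}{p_i}}$, say $E_{\frac{n}{p_a}}, E_{\frac{n}{p_b}}, E_{\frac{n}{p_c}} \subseteq X$ with $a,b,c$ distinct. The idea is to show that $X$ must then be so large that it exceeds $\gamma(n)$, which by Theorem \ref{res-1} contradicts minimality of $X$ (recall $\kappa(\mathcal{P}(C_n)) \leq \gamma(n)$, and $X$ being minimum means $|X| = \kappa(\mathcal{P}(C_n))$). First I would observe that, since $X$ is a minimum cut-set it is in particular a minimal cut-set, so Lemma \ref{simple} applies: for every divisor $d$ of $n$, either $E_d \subseteq X$ or $E_d \cap X = \emptyset$. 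Thus $X$ is a union of sets of the form $E_d$, and $|X| = \sum_{d \in D} \phi(d)$ where $D$ is the set of divisors $d$ with $E_d \subseteq X$.

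Next I would identify which divisors $d$ are \emph{forced} into $X$. Certainly $E_n \subseteq X$ always. Now I claim that if $E_{\frac{n}{p_a}}, E_{\frac{n}{p_b}} \subseteq X$, then every $E_d$ with $d$ a multiple of $\frac{n}{p_a p_b}$ (properly dividing $n$, other than the two already listed and $n$ itself — more precisely, every $d$ lying ``between'' $\frac{n}{p_a p_b}$ and $n$ in the divisibility order that is not already accounted for) must also lie in $X$: any vertex of order such a $d$ is adjacent to all vertices of order $\frac{n}{p_a}$ and of order $\frac{n}{p_b}$ and of order $n$, so if it were \emph{not} removed it would connect the two sides of any separation — one needs here the structural fact (as in the Corollary after Lemma \ref{simple}) that in a separation $A \cup B$ the generators $E_n$ go entirely to one side, say $A$, and the elements of $E_{\frac{n}{p_a}}$ that remain... but they are all removed, so the real point is to compare the forced subset with the explicit cut-set $X_{a,b}$ from Section \ref{new-bound}. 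A cleaner route: show directly that the forced portion of $X$ contains $E_n \cup E_{\frac{n}{p_a}} \cup E_{\frac{n}{p_a^2}} \cup \cdots \cup E_{\frac{n}{p_b}} \cup \cdots$ together with all $E_d$ for $d$ properly between $\frac{n}{p_a p_b}$ and $n$; summing $\phi$ over exactly these divisors reproduces $\gamma_{a,b}(n) = |X_{a,b}|$, and then the third removed set $E_{\frac{n}{p_c}}$ (and everything its presence forces) is \emph{extra}, giving $|X| > \gamma_{a,b}(n) \geq \gamma_{r-1,r}(n) = \gamma(n)$ by the Proposition comparing the $\gamma_{a,b}$. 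This is the desired contradiction.

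The main obstacle I expect is making the ``forcing'' step rigorous: precisely characterizing which $E_d$ are compelled to lie in $X$ once $E_{\frac{n}{p_a}}, E_{\frac{n}{p_b}} \subseteq X$, and verifying that the sum of $\phi$ over that forced collection equals $\gamma_{a,b}(n)$ (or at least is $\geq \gamma_{a,b}(n)$). The key technical lemma is: if $d$ is a divisor of $n$ with $o$-value a common multiple of two of $\frac{n}{p_a}, \frac{n}{p_b}$ in the sense that $d \nmid \frac{n}{p_a p_b}$ and $d \neq \frac{n}{p_a^j}, \frac{n}{p_b^j}, n$, then $E_d$ must be removed — because a surviving vertex $x$ with $o(x) = d$ would be adjacent to surviving vertices on \emph{both} sides of the separation (here one invokes that the ``type (Y1)/(Y2)'' style partition forces $x$, being divisible appropriately, to link $A$ and $B$). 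Once this is in place, I would carefully tally the forced divisors, match the count to Proposition \ref{new-size}'s computation of $|X_{a,b}| = \gamma_{a,b}(n)$, note that $E_{\frac{n}{p_c}}$ and its consequences are genuinely additional and nonempty (using $r \geq 3$ so these sets are nonempty and disjoint from the previously counted ones), and conclude $|X| > \gamma(n)$, contradicting $|X| = \kappa(\mathcal{P}(C_n)) \leq \gamma(n)$. An alternative, perhaps slicker finish avoids the exact count: show merely that $X \supseteq X_{a,b} \cup E_{\frac{n}{p_c}}$ as sets (each $E_d$ in $X_{a,b}$ is forced, and $E_{\frac{n}{p_c}} \subseteq X$ by hypothesis, and these are disjoint), whence $|X| \geq |X_{a,b}| + \phi(\frac{n}{p_c}) > \gamma_{a,b}(n) \geq \gamma(n)$.
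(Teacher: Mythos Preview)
Your overall strategy (bound $|X|$ below and compare to $\gamma(n)$) is right, but the ``forcing'' step has a genuine gap. You assert that once $E_{n/p_a}$ and $E_{n/p_b}$ lie in the minimum cut-set $X$, every $E_d$ occurring in the explicit cut-set $X_{a,b}$ is forced into $X$; in particular you want $E_{n/p_a^j}$ for $j\geq 2$ and the pieces of $R=\bigcup_{i\neq a,b} S_{n/(p_i p_a p_b)}$ to be forced. This does not follow from the adjacency reasoning you sketch. An element $x$ with $o(x)=n/p_a^j$ (for $j\geq 2$) is \emph{not} adjacent to any element of $E_{n/p_i}$ for $i\neq a$, so nothing prevents $E_{n/p_a^j}$ from sitting entirely on one side of the separation of $\mathcal{P}(\overline{X})$. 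Similarly, an element of $R$, say of order $d$ with $d\mid n/(p_i p_a p_b)$, is not adjacent to $E_{n/p_a}$ or $E_{n/p_b}$ (both removed) in any useful way; you have no control over which surviving vertices lie on which side of the separation, so you cannot argue that $x$ ``links $A$ and $B$''. The (Y1)/(Y2)-style partition you invoke is used in the paper to verify that the \emph{specific} set $X_{a,b}$ is a cut-set; it does not show that any other cut-set containing $E_{n/p_a}\cup E_{n/p_b}$ must contain $X_{a,b}$.

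The paper's proof bypasses all of this with a direct count: from $E_n, E_{n/p_j}, E_{n/p_k}, E_{n/p_l}\subseteq X$ (pairwise disjoint) one gets
\[
|X|\ \geq\ \phi(n)+\phi\!\left(\tfrac{n}{p_j}\right)+\phi\!\left(\tfrac{n}{p_k}\right)+\phi\!\left(\tfrac{n}{p_l}\right),
\]
and then Lemmas~\ref{inequ-1} and~\ref{inequ-2} (with a short case split on whether $l=r$) bound each summand below so that the right-hand side strictly exceeds $\gamma(n)$, contradicting $|X|=\kappa(\mathcal{P}(C_n))\leq\gamma(n)$. No structural information about the separation is needed beyond the disjointness of these four sets. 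If you drop the forcing idea and instead prove the inequality $\phi(n/p_j)+\phi(n/p_k)+\phi(n/p_l)>\gamma(n)-\phi(n)$ directly, your argument becomes the paper's.
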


\begin{proof}
By Theorem \ref{res-1}, we have $|X|=\kappa(\mathcal{P}(C_n))\leq \gamma(n)$.
Set $T:=X\setminus E_n$. Since $E_n$ is contained in $X$, we have $|T|=|X|-|E_n|=|X|-\phi(n)\leq \gamma(n)-\phi(n)$. This gives
\begin{equation}\label{eqn-2}
|T| \leq \frac{n}{p_1\cdots p_r}\times \left[\phi(p_1\cdots p_{r-1})+ \phi(p_1\cdots p_{r-2}p_r)+p_1\cdots p_{r-2}-\phi(p_1\cdots p_{r-2})\right].
\end{equation}
Consider the sets $E_{\frac{n}{p_i}}$ for $1\leq i\leq r$. By Lemma \ref{simple}, each of them is either contained in $X$ or disjoint from $X$. Note that, if any such set is contained in $X$, then it must be contained in $T$.

If possible, suppose that the sets $E_{\frac{n}{p_j}}, E_{\frac{n}{p_k}}$ and $E_{\frac{n}{p_l}}$ are contained in $X$, where $j,k,l$ are pairwise distinct elements in $[r]$. Without loss, we may assume that $j<k<l$.
Since $E_{\frac{n}{p_j}}, E_{\frac{n}{p_k}}, E_{\frac{n}{p_l}}$ are pairwise disjoint and contained in $T$, we have
$$|T|\geq \left\vert E_{\frac{n}{p_j}}\right\vert + \left\vert E_{\frac{n}{p_k}}\right\vert +\left\vert E_{\frac{n}{p_l}}\right\vert =\phi\left(\frac{n}{p_j}\right)+\phi\left(\frac{n}{p_k}\right)+\phi\left(\frac{n}{p_l}\right).$$
Since $j\leq r-2$ and $k\leq r-1$, Lemma \ref{inequ-2} gives that
$$\phi\left(\frac{n}{p_j}\right)\geq p_{r-1}^{n_{r-1}-1} \phi\left(\frac{n}{p_{r-1}^{n_{r-1}}}\right)=\frac{n}{p_1\cdots p_r}\times \phi\left(p_1\cdots p_{r-2}p_r \right)$$
and
$$\phi\left(\frac{n}{p_k}\right)\geq p_r^{n_r -1} \phi\left(\frac{n}{p_r^{n_r}}\right)=\frac{n}{p_1\cdots p_r}\times \phi\left(p_1\cdots p_{r-2}p_{r-1} \right).$$
If $l<r$, then again using Lemma \ref{inequ-2}, we have
\begin{align*}
\phi\left(\frac{n}{p_l}\right)\geq p_{r}^{n_{r}-1} \phi\left(\frac{n}{p_{r}^{n_{r}}}\right) & =\frac{n}{p_1\cdots p_r}\times \phi\left(p_1\cdots p_{r-2}p_{r-1} \right)\\
 & = \frac{n}{p_1\cdots p_r}\times \phi\left(p_1\cdots p_{r-2}\right) (p_{r-1} -1)\\
 & > \frac{n}{p_1\cdots p_r}\times \left[p_1p_2\cdots p_{r-2} - \phi\left(p_1\cdots p_{r-2}\right)\right].
\end{align*}
The strict inequality in the above holds by Lemma \ref{inequ-1} as $p_{r-1}> r-1$.
Now suppose that $l=r$. We have $\frac{\phi(p_{r-1}p_r)}{p_r} = \phi(p_{r-1}) - \frac{\phi(p_{r-1})}{p_r}> r-2$.
Then using Lemmas \ref{inequ-1} and \ref{inequ-2} again, we have
\begin{align*}
\phi\left(\frac{n}{p_l}\right) = \phi\left(\frac{n}{p_r}\right) & \geq \frac{n}{p_1\cdots p_r}\times \frac{\phi\left(p_1p_2\cdots p_{r-1}p_r\right)}{p_r}\\
 & > \frac{n}{p_1\cdots p_r}\times (r-2)\phi\left(p_1\cdots p_{r-2}\right)\\
 & = \frac{n}{p_1\cdots p_r}\times \left[(r-1)\phi\left(p_1\cdots p_{r-2}\right) - \phi\left(p_1\cdots p_{r-2}\right)\right]\\
 & \geq \frac{n}{p_1\cdots p_r}\times \left[p_1p_2\cdots p_{r-2} - \phi\left(p_1\cdots p_{r-2}\right)\right].
\end{align*}
Combining the above inequalities, we get
$$|T| > \frac{n}{p_1\cdots p_r}\times \left[\phi(p_1\cdots p_{r-1})+ \phi(p_1\cdots p_{r-2}p_r)+p_1\cdots p_{r-2}-\phi(p_1\cdots p_{r-2})\right],$$
which is a contradiction to $(\ref{eqn-2})$. This completes the proof.
\end{proof}

\section{Proof of Theorem \ref{res-2}}

Let $n=p_1^{n_1}p_2^{n_2}\cdots p_r^{n_r}$, where $r\geq 3$, $n_1,n_2,\ldots, n_r$ are positive integers and $p_1,p_2,\ldots,p_r$ are prime numbers with $p_1<p_2<\cdots <p_r$.
Recall that the set
$$Y_r =E_n\bigcup \left(\underset{t=1}{\overset{r-1}{\bigcup}}S_{\frac{n}{p_rp_t}}\right)$$
defined in Section \ref{revisit} is a cut-set of $\mathcal{P}(C_n)$ which gives the upper bound $\alpha_r(n)=\alpha(n)$ for $\kappa(\mathcal{P}(C_n))$.

We shall use the following fact frequently in the rest of the paper. Suppose that $X$ is a cut-set of $\mathcal{P}(C_n)$ and $A\cup B$ is a separation of $\mathcal{P}(\overline{X})$. If $A$ contains an element of order $a$ and $B$ contains an element of order $b$, then the unique subgroup $S_{(a,b)}$ of $C_n$ must be contained in $X$, where $(a,b)$ denotes the greatest common divisor of $a$ and $b$. The following proposition gives a sufficient condition for $\kappa(\mathcal{P}(C_n))= \alpha(n)$.

\begin{proposition}\label{all}
Let $X$ be a minimum cut-set of $\mathcal{P}(C_n)$. If $X$ does not contain any of the sets $E_{\frac{n}{p_1}}, E_{\frac{n}{p_2}},\dots, E_{\frac{n}{p_r}}$, then $X=Y_r$ and so
$\kappa(\mathcal{P}(C_n))=\alpha(n)$.
\end{proposition}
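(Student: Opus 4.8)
The plan is to show that any separation of $\mathcal{P}(\overline{X})$ forces $X$ to contain the set $Y_r$, and then to pin down $X=Y_r$ by a size comparison. Since $X$ is a minimum cut-set it is minimal, so by Lemma~\ref{simple} each $E_d$ is either contained in $X$ or disjoint from it, and by hypothesis $E_{n/p_i}\cap X=\emptyset$ for every $i\in[r]$. Fix a separation $A\cup B$ of $\mathcal{P}(\overline{X})$; by the corollary to Lemma~\ref{simple}, for each $i$ the set $E_{n/p_i}$ lies entirely in $A$ or entirely in $B$. First I would rule out the possibility that they all lie in one part, say in $A$: choosing $y\in B$ and writing $d=o(y)$, we have $d\mid n$, $d\neq n$ and $d\neq n/p_i$ for all $i$, so $d$ is a proper divisor of $n/p_{i_0}$ for some prime $p_{i_0}\mid n/d$; then $y$ is adjacent to every element of the nonempty set $E_{n/p_{i_0}}\subseteq A$, contradicting that $A\cup B$ is a separation. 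Hence there is a partition $[r]=I\sqcup J$ with $I,J\neq\emptyset$ such that $E_{n/p_i}\subseteq A$ for $i\in I$ and $E_{n/p_j}\subseteq B$ for $j\in J$.

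Now for $i\in I$ and $j\in J$, an element of order $n/p_i$ in $A$ and one of order $n/p_j$ in $B$ have orders with greatest common divisor $n/(p_ip_j)$, so by the fact recalled just before the proposition the subgroup $S_{n/(p_ip_j)}$ lies in $X$. Therefore $W\subseteq X$, where $W:=E_n\cup\bigcup_{i\in I,\,j\in J}S_{n/(p_ip_j)}$. As $Y_r$ is a cut-set with $|Y_r|=\alpha(n)$ and $X$ is a minimum cut-set, $|W|\le|X|\le\alpha(n)$. The proposition will follow once I show $|W|\ge\alpha(n)$ with equality only for $\{I,J\}=\{\{r\},[r-1]\}$: then the partition must be this one, so $W=E_n\cup\bigcup_{t=1}^{r-1}S_{n/(p_rp_t)}=Y_r$, and $Y_r=W\subseteq X$ with $|X|\le|Y_r|$ gives $X=Y_r$, hence $\kappa(\mathcal{P}(C_n))=\alpha(n)$.

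For the size, put $m_I=\prod_{i\in I}p_i^{n_i}$ and $m_J=n/m_I=\prod_{j\in J}p_j^{n_j}$. An element $x$ lies outside $\bigcup_{i\in I,\,j\in J}S_{n/(p_ip_j)}$ precisely when $p_i^{n_i}\mid o(x)$ for all $i\in I$ or $p_j^{n_j}\mid o(x)$ for all $j\in J$; the numbers of such elements are $m_J\phi(m_I)$ and $m_I\phi(m_J)$, and the elements lying in both sets form exactly $E_n$. Inclusion–exclusion, together with $E_n$ being disjoint from the union, gives
$$|W|=(m_I-\phi(m_I))(m_J-\phi(m_J))+\phi(m_I)\phi(m_J)=n+2\phi(n)-\big(m_I\phi(m_J)+m_J\phi(m_I)\big).$$
Setting $g(S):=\prod_{i\in S}(1-1/p_i)$ for $S\subseteq[r]$ (so $g(\emptyset)=1$ and $g$ is multiplicative over disjoint subsets), we have $m_I\phi(m_J)+m_J\phi(m_I)=n\big(g(I)+g(J)\big)$, and one checks that the partition $\{\{r\},[r-1]\}$ gives $|W|=\alpha(n)$. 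So it remains to prove $g(I)+g(J)\le g(\{r\})+g([r-1])$, with equality only for $\{I,J\}=\{\{r\},[r-1]\}$.

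This last inequality is the step I expect to require the real care. Assuming without loss that $r\in I$ and writing $I=\{r\}\cup T$ with $T\subseteq[r-1]$ (so $J=[r-1]\setminus T$ and $[r-1]=T\sqcup J$), multiplicativity of $g$ gives the factorisation
$$\big(g(\{r\})+g([r-1])\big)-\big(g(I)+g(J)\big)=\big(1-g(T)\big)\big(g(\{r\})-g(J)\big).$$
Here $g(T)\le 1$ with equality exactly when $T=\emptyset$, and $g(\{r\})-g(J)>0$ because $J\neq\emptyset$ and every $j\in J$ satisfies $p_j<p_r$, so $g(J)\le 1-1/p_j<1-1/p_r=g(\{r\})$. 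Hence the difference is nonnegative and vanishes only when $T=\emptyset$, i.e. only when $I=\{r\}$; combined with $|W|\le\alpha(n)$ this forces $\{I,J\}=\{\{r\},[r-1]\}$ and finishes the proof.
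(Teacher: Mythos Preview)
Your proof is correct and takes a genuinely different route from the paper's. The paper splits into two cases according to whether one of $I,J$ is a singleton. In the singleton case it computes $\bigl|\bigcup_{t\neq k}S_{n/(p_kp_t)}\bigr|$ via Lemma~\ref{none-1} and compares using Lemma~\ref{inequ-3}; in the case $|I|,|J|\ge 2$ it does not compute $|W|$ at all but introduces three auxiliary unions $K_1,K_2,K_3$ (a row, a column, and a ``shifted'' column of the array $S_{n/(p_{i_l}p_{i_{a+k}})}$) and shows $|K_1\cup K_2|>|K_1\cup K_3|$ by comparing $|K_2|$ with $|K_3|$ and $K_1\cap K_2$ with $K_1\cap K_3$, thereby reducing to the singleton case. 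Your argument is uniform: the observation that $x\notin\bigcup_{i\in I,\,j\in J}S_{n/(p_ip_j)}$ iff $m_I\mid o(x)$ or $m_J\mid o(x)$ yields the closed form $|W|=n+2\phi(n)-n\bigl(g(I)+g(J)\bigr)$, after which the factorisation $(1-g(T))(g(\{r\})-g(J))$ handles all partitions at once. This is cleaner and more conceptual, and it pinpoints exactly why $\{r\}\sqcup[r-1]$ is the unique extremiser. The paper's approach, by contrast, stays within the inclusion--exclusion framework of Lemma~\ref{none-1} already set up and reused in Propositions~\ref{one-1} and~\ref{two-2}, so the $K_1,K_2,K_3$ device, while less transparent here, is part of a template recycled elsewhere.
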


\begin{proof}
Fix a separation $A\cup B$ of $\mathcal{P}(\overline{X})$.
Let $P=\left\{E_{\frac{n}{p_i}}: 1\leq i\leq r, E_{\frac{n}{p_i}}\subseteq A\right\}$ and $Q=\left\{E_{\frac{n}{p_j}}: 1\leq j\leq r, E_{\frac{n}{p_j}}\subseteq B\right\}$.
Set $a=|P|$ and $b=|Q|$. Then $a+b=r$ with $1\leq a\leq r-1$ and $1\leq b\leq r-1$ (these inequalities hold since both $A$ and $B$ are nonempty and $E_n$ is contained in $X$).\\

\noindent {\bf Case 1:} $a=1$ or $b=1$.\medskip

Without loss, we may assume that $a=1$. Suppose that $P=\left\{E_{\frac{n}{p_k}}\right\}$ for some $k\in [r]$. Then $Q=\left\{E_{\frac{n}{p_j}}:j\in [r]\setminus\{k\}\right\}$. We show that $k=r$.

Since $A\cup B$ is a separation of $\mathcal{P}(\overline{X})$, it follows that the subgroups
$S_{\frac{n}{p_{k}p_{j}}},\; j\in [r]\setminus\{k\}$, of $C_n$ are contained in $X$. Let $L$ be the union of these $r-1$ subgroups of $C_n$. By Lemma \ref{none-1},
$$|L|= \frac{n}{p_1p_2 \cdots p_r}\times \left[\frac{p_1p_2\cdots p_r}{p_k}-\phi\left(\frac{p_1p_2\cdots p_r}{p_k}\right)\right].$$
Since $E_n$ and $L$ are disjoint and contained in $X$, we get $|X|\geq \phi(n) + |L|$. If $k\neq r$, then Lemma \ref{inequ-3} together with the fact that $r\geq 3$ imply
$$\kappa(\mathcal{P}(C_n))=|X| \geq \phi(n) + |L| > \phi(n) + \frac{n}{p_1p_2 \cdots p_r}\times \left[p_1p_2\cdots p_{r-1}-\phi(p_1p_2\cdots p_{r-1})\right]=\alpha(n),$$
a contradiction to (\ref{bound-1}). 

Thus $k=r$ and hence $X$ contains $E_n\cup L=E_n\bigcup \left(\underset{j=1}{\overset{r-1}{\bigcup}}S_{\frac{n}{p_rp_j}}\right)=Y_r$. Since both $X$ and $Y_r$ are cut-sets of $\mathcal{P}(C_n)$ with $X$ being of minimum size, we must have $X=Y_r$ and hence $\kappa(\mathcal{P}(C_n))=|X|=|Y_r|=\alpha(n)$. \\

\noindent {\bf Case 2:} $a\geq 2$ and $b\geq 2$.\medskip

We show that this case is not possible. Suppose that $P=\left\{E_{\frac{n}{p_{i_1}}},E_{\frac{n}{p_{i_2}}},\ldots,E_{\frac{n}{p_{i_a}}}\right\}$ and $Q=\left\{E_{\frac{n}{p_{i_{a+1}}}},E_{\frac{n}{p_{i_{a+2}}}},\ldots,E_{\frac{n}{p_{i_{a+b}}}}\right\}$, where
$[r]=\{i_1,i_2,\ldots,i_a,i_{a+1},i_{a+2},\ldots, i_{a+b}\}$.
Without loss, we may assume that $p_{i_1} > p_{i_{a+1}}$. Since $A\cup B$ is a separation of $\mathcal{P}(\overline{X})$, the following subgroups
\begin{align*}
& S_{\frac{n}{p_{i_1}p_{i_{a+1}}}},\; S_{\frac{n}{p_{i_1}p_{i_{a+2}}}},\ldots,\; S_{\frac{n}{p_{i_1}p_{i_{a+b}}}}\\
& S_{\frac{n}{p_{i_2}p_{i_{a+1}}}}, \; S_{\frac{n}{p_{i_2}p_{i_{a+2}}}},\ldots,\; S_{\frac{n}{p_{i_2}p_{i_{a+b}}}}\\
& \vdots \\
& S_{\frac{n}{p_{i_a}p_{i_{a+1}}}},\; S_{\frac{n}{p_{i_a}p_{i_{a+2}}}},\ldots,\; S_{\frac{n}{p_{i_a}p_{i_{a+b}}}}
\end{align*}
of $C_n$ must be contained in $X$. Consider the following three subsets of $C_n$:
$$K_1=\underset{k=1}{\overset{b}{\bigcup}} S_{\frac{n}{p_{i_1}p_{i_{a+k}}}},\;\; K_2=\underset{l=2}{\overset{a}{\bigcup}} S_{\frac{n}{p_{i_l}p_{i_{a+1}}}},\;\; K_3=\underset{l=2}{\overset{a}{\bigcup}} S_{\frac{n}{p_{i_{1}}p_{i_l}}}.$$
Thus $K_1$ is the union of the subgroups listed above in the first row, and $K_2$ is the union of the subgroups listed in the first column, except the subgroup $S_{\frac{n}{p_{i_1}p_{i_{a+1}}}}$. The set $K_3$ is well-defined as $a\geq 2$.
Note that $K_1$ and $K_2$ are contained in $X$ but $K_3$ need not be. By Lemma \ref{none-1},
\begin{align*}
|K_2| & =\frac{n}{p_1p_2 \cdots p_r}\times p_{i_1} p_{i_{a+2}}\cdots p_{i_{a+b}}\left[p_{i_{2}}\cdots p_{i_{a}}-\phi(p_{i_{2}}\cdots p_{i_{a}})\right],\\
|K_3| & =\frac{n}{p_1p_2 \cdots p_r}\times p_{i_{a+1}} p_{i_{a+2}}\cdots p_{i_{a+b}}\left[p_{i_{2}}\cdots p_{i_{a}}-\phi(p_{i_{2}}\cdots p_{i_{a}})\right].
\end{align*}
Since $p_{i_1} > p_{i_{a+1}}$ by our assumption, it follows that $|K_2|>|K_3|$. We have
\begin{align*}
K_1\cap K_2 & =\left(\underset{l=2}{\overset{a}{\bigcup}} S_{\frac{n}{p_{i_1}p_{i_l}p_{i_{a+1}}}}\right)\bigcup \left(\underset{l=2}{\overset{a}{\bigcup}} S_{\frac{n}{p_{i_1}p_{i_l}p_{i_{a+1}}p_{i_{a+2}}}}\right)\bigcup \ldots \bigcup \left(\underset{l=2}{\overset{a}{\bigcup}} S_{\frac{n}{p_{i_1}p_{i_l}p_{i_{a+1}}p_{i_{a+b}}}}\right)\\
 & \subseteq \left(\underset{l=2}{\overset{a}{\bigcup}} S_{\frac{n}{p_{i_1}p_{i_l}p_{i_{a+1}}}}\right)\bigcup \left(\underset{l=2}{\overset{a}{\bigcup}} S_{\frac{n}{p_{i_1}p_{i_l}p_{i_{a+2}}}}\right)\bigcup \ldots \bigcup \left(\underset{l=2}{\overset{a}{\bigcup}} S_{\frac{n}{p_{i_1}p_{i_l}p_{i_{a+b}}}}\right)\\
& = K_1\cap K_3.
\end{align*}
So $|K_1\cap K_2|\leq |K_1\cap K_3|$ and hence
$$|K_1\cup K_2|=|K_1|+|K_2|-|K_1\cap K_2|> |K_1|+|K_3|-|K_1\cap K_3|= |K_1\cup K_3|.$$
Therefore, $|X|\geq |E_n|+|K_1\cup K_2| > \phi(n)+|K_1\cup K_3|$. Since $K_1\cup K_3$ is the union of the subgroups $S_{\frac{n}{p_{i_1}p_j}}$, $j\in [r]\setminus\{i_1\}$, of $C_n$,
Lemmas \ref{none-1} and Lemma \ref{inequ-3} give that
\begin{align*}
|K_1\cup K_3| & =\frac{n}{p_1p_2 \cdots p_r}\times \left[p_{i_2}\cdots p_{i_a}p_{i_{a+1}}\cdots p_{i_{a+b}}-\phi(p_{i_2}\ldots p_{i_a}p_{i_{a+1}}\cdots p_{i_{a+b}})\right]\\
 & \geq \frac{n}{p_1p_2 \cdots p_r}\times \left[p_1p_2\cdots p_{r-1}-\phi(p_1p_2\cdots p_{r-1})\right].
\end{align*}
Then
\begin{align*}
\kappa(\mathcal{P}(C_n))=|X|& >\phi(n)+|K_1\cup K_3| \\
 & \geq \phi(n) + \frac{n}{p_1p_2 \cdots p_r}\times \left[p_1p_2\cdots p_{r-1}-\phi(p_1p_2\cdots p_{r-1})\right]=\alpha(n),
\end{align*}
a contradiction to (\ref{bound-1}).
\end{proof}

\begin{proposition}\label{one-1}
Let $X$ be a minimum cut-set of $\mathcal{P}(C_n)$ and $A\cup B$ be a separation of $\mathcal{P}(\overline{X})$. If $X$ contains $E_{\frac{n}{p_s}}$ for exactly one $s\in [r]$, then all the remaining sets $E_{\frac{n}{p_i}}$, $i\in [r]\setminus \{s\}$, are contained either in $A$ or in $B$.
\end{proposition}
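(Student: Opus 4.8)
The plan is to argue by contradiction. Suppose the conclusion fails. By the corollary to Lemma~\ref{simple}, every set $E_{n/p_i}$ with $i\ne s$ lies entirely in $A$ or entirely in $B$, so $[r]\setminus\{s\}=P\sqcup Q$ where $P=\{i:E_{n/p_i}\subseteq A\}$ and $Q=\{i:E_{n/p_i}\subseteq B\}$; the failure of the conclusion means both $P$ and $Q$ are nonempty. The structural fact used repeatedly in the paper is that $E_{n/p_j}\subseteq A$ and $E_{n/p_k}\subseteq B$ force $S_{n/(p_jp_k)}\subseteq X$: every element of $S_{n/p_j}\setminus X$ is adjacent to a generator of $S_{n/p_j}$, hence lies in $A$, and symmetrically $S_{n/p_k}\setminus X\subseteq B$, so $S_{n/(p_jp_k)}=S_{n/p_j}\cap S_{n/p_k}$ is disjoint from $\overline{X}$. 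Thus $X$ contains $E_n$, $E_{n/p_s}$ and $S_{n/(p_jp_k)}$ for all $j\in P$, $k\in Q$; since $j,k\ne s$, these are pairwise disjoint, so $|X|\ge\phi(n)+\phi(n/p_s)+|W|$ where $W$ is the union of those subgroups.

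I would then split on $\min(|P|,|Q|)$. If $|P|\ge 2$ and $|Q|\ge 2$, the $K_1,K_2,K_3$ construction from the proof of Proposition~\ref{all} (Case~2) carries over: choosing $i_1\in P$ and $i'\in Q$ with $p_{i_1}>p_{i'}$ one gets $|W|\ge|K_1\cup K_2|>|K_1\cup K_3|$, where $K_1\cup K_3=\bigcup_{j\in[r]\setminus\{i_1,s\}}S_{n/(p_{i_1}p_j)}=:L_{i_1}$, with $p_{i_1}\ge 3$ automatically. If instead $|P|=1$ (the case $|Q|=1$ being obtained by interchanging $A$ and $B$), say $P=\{i_1\}$, then $Q=[r]\setminus\{i_1,s\}$ and $W=\bigcup_{k\in Q}S_{n/(p_{i_1}p_k)}$ is again $L_{i_1}$. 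In both cases $|X|\ge\phi(n)+\phi(n/p_s)+|L_{i_1}|$ for this $i_1\in P$, with strict inequality when $|P|\ge2$.

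Next I would compute $|L_{i_1}|$ by Lemma~\ref{none-1}; using the identity $p_s\,\phi(m/(p_{i_1}p_s))=\phi(m/p_{i_1})+\phi(m/(p_{i_1}p_s))$ with $m:=p_1\cdots p_r$ one rewrites $|L_{i_1}|=\alpha_{i_1}(n)-\phi(n)-\tfrac{n}{m}\phi(m/(p_{i_1}p_s))$, hence $|X|\ge\alpha_{i_1}(n)+D$ where $D:=\phi(n/p_s)-\tfrac{n}{m}\phi(m/(p_{i_1}p_s))$ (strict if $|P|\ge2$). By Lemma~\ref{compa-1}, $\alpha_{i_1}(n)\ge\alpha_r(n)=\alpha(n)$, with equality only if $i_1=r$, and $\alpha(n)\ge\kappa(\mathcal{P}(C_n))=|X|$ by~(\ref{bound-1}). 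A direct computation gives $D=\big(\prod_{t\ne i_1,s}\phi(p_t^{n_t})\big)\big[\phi(p_{i_1}^{n_{i_1}})\phi(p_s^{n_s-1})-p_{i_1}^{n_{i_1}-1}p_s^{n_s-1}\big]$ (with the convention $\phi(p_s^{0})=1$ when $n_s=1$). If $p_{i_1}\ge 3$, the bracket is $\ge 0$ and vanishes only in the single configuration $n_s\ge 2$, $p_{i_1}=3$, $p_s=2$ — which forces $i_1=2\ne r$, so $\alpha_{i_1}(n)>\alpha(n)$; in every sub-case this yields $|X|>\alpha(n)\ge|X|$, a contradiction. The only remaining possibility is $|P|=1$ with $p_{i_1}=2$, i.e.\ $i_1=1$ (so $i_1\ne r$); then $\phi(2^{n_1})=2^{n_1-1}$ gives $D=0$ if $n_s=1$ — and $\alpha_1(n)>\alpha(n)$ again finishes — while if $n_s\ge 2$ one finds $D=-2^{n_1-1}p_s^{n_s-2}\prod_{t\ne 1,s}\phi(p_t^{n_t})<0$.

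The delicate point, on which I expect to spend the most care, is exactly this last configuration ($P=\{1\}$, $n_s\ge 2$): one must show the negative quantity $D$ is still dominated by the gap $\alpha_1(n)-\alpha(n)$. Using $\alpha_1(n)-\alpha_r(n)=\tfrac{n}{m}(p_r-2)\big[p_2\cdots p_{r-1}-\phi(p_2\cdots p_{r-1})\big]$ (from the proof of Lemma~\ref{compa-1}), the ratio $\tfrac{\alpha_1(n)-\alpha(n)}{-D}$ telescopes to $\tfrac{p_s(p_s-1)(p_r-2)}{p_r-1}\big(\prod_{t=2}^{r-1}\tfrac{p_t}{p_t-1}-1\big)$; since $s\in\{2,\dots,r\}$ one has $\prod_{t=2}^{r-1}\tfrac{p_t}{p_t-1}-1\ge\tfrac{1}{p_s-1}$ when $s\le r-1$ and $\ge\tfrac{1}{p_{r-1}-1}$ when $s=r$, and combined with $p_s\ge p_2\ge 3$ and $p_r\ge 5$ (forced by $p_1=2$ and $r\ge 3$) this makes the ratio exceed $1$. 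Hence $|X|\ge\alpha_1(n)+D>\alpha(n)\ge|X|$, the final contradiction, so one of $P,Q$ is empty — which is the claim.
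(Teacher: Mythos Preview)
Your argument is correct and follows the same overall architecture as the paper: a contradiction via the $P/Q$ split, with the $|P|\ge 2,\,|Q|\ge 2$ case reduced (by the $K_1,K_2,K_3$ trick, exactly as in Proposition~\ref{all}) to the $|P|=1$ situation. Where you diverge from the paper is in the treatment of that base case.

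The paper, having the subgroup union $L=\bigcup_{i\ne j,s}S_{n/(p_ip_j)}$ inside $X$, writes $|X|\ge\phi(n)+\tfrac{n}{m}\,v$ with $v=\tfrac{\phi(m)}{p_s}+\tfrac{m}{p_j}-p_s\phi(m/(p_sp_j))$ and then checks $v>u:=p_1\cdots p_{r-1}-\phi(p_1\cdots p_{r-1})$ by a case split on whether $j=r$, $s=r$, or $p_j=2$, each handled by ad hoc manipulations. You instead rewrite $|L_{i_1}|$ as $\alpha_{i_1}(n)-\phi(n)-\tfrac{n}{m}\phi(m/(p_{i_1}p_s))$ via the identity $p_s\phi(m/(p_{i_1}p_s))=\phi(m/p_{i_1})+\phi(m/(p_{i_1}p_s))$, obtaining $|X|\ge\alpha_{i_1}(n)+D$ and invoking Lemma~\ref{compa-1} ($\alpha_{i_1}(n)\ge\alpha_r(n)$) to absorb most of the work. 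This is a cleaner organisation: it explains \emph{why} most configurations are easy (either $D\ge 0$, or $i_1\ne r$ so there is slack $\alpha_{i_1}(n)-\alpha(n)>0$), and isolates the one genuinely tight configuration $P=\{1\}$, $p_1=2$, $n_s\ge 2$. Your ratio computation for that case is correct; the bounds $\prod_{t=2}^{r-1}\tfrac{p_t}{p_t-1}-1\ge\tfrac{1}{p_s-1}$ (for $s\le r-1$) and $\ge\tfrac{1}{p_{r-1}-1}$ (for $s=r$) do the job, using $p_r\ge 5$. The paper's approach avoids introducing $\alpha_{i_1}(n)$ and $D$ but pays for it with more sub-cases; yours trades that for one slightly longer final estimate but makes the role of Lemma~\ref{compa-1} transparent.
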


\begin{proof}
Taking $u=p_1p_2\cdots p_{r-1}-\phi(p_1p_2\cdots p_{r-1})$, we have
\begin{equation}\label{eqn-10}
|X|=\kappa(\mathcal{P}(C_n))\leq \alpha(n)= \phi(n)+\frac{n}{p_1p_2 \cdots p_r}\times u.
\end{equation}
Define the following two sets:
$$ P :=\left\{E_{\frac{n}{p_i}}: i\in [r]\setminus\{s\}, E_{\frac{n}{p_i}}\subseteq A\right\},\;\; Q:=\left\{E_{\frac{n}{p_j}}: j\in [r]\setminus\{s\}, E_{\frac{n}{p_j}}\subseteq B\right\}.$$
Set $a=|P|$ and $b=|Q|$. Then $a+b=r-1$ with $0\leq a\leq r-1$ and $0\leq b\leq r-1$. We show that either $a=0$ or $b=0$. Suppose that $a\geq 1$ and $b\geq 1$.\\

\noindent {\bf Case 1:} $a=1$ or $b=1$.\medskip

Without loss, we may assume that $a=1$. Suppose that $P=\left\{E_{\frac{n}{p_j}}\right\}$ for some $j\in [r]$. Then $Q= \left\{E_{\frac{n}{p_i}}: i\in [r]\setminus \{j, s\}\right\}$.
Since $A\cup B$ is a separation of $\mathcal{P}(\overline{X})$, the subgroups $S_{\frac{n}{p_ip_j}},\; i\in [r]\setminus\{j,s\}$, of $C_n$ are contained in $X$. Let $L$ be the union of these $r-2$ subgroups of $C_n$. By Lemma \ref{none-1}, we have
$$\left\vert L\right\vert =\frac{n}{p_1p_2\cdots p_r}\times \left[\frac{p_1p_2\cdots p_r}{p_j}-p_s\phi\left(\frac{p_1p_2\cdots p_r}{p_sp_j}\right)\right].$$
Since $E_n$, $E_{\frac{n}{p_s}}$ and $L$ are pairwise disjoint and contained in $X$, we get
\begin{align*}
|X| & \geq |E_n|+\left\vert E_{\frac{n}{p_s}}\right\vert + |L| \\
& \geq  \phi(n)+\phi\left(\frac{n}{p_s}\right) +\frac{n}{p_1p_2\cdots p_r}\times \left[\frac{p_1p_2\cdots p_r}{p_j}- p_s\phi\left(\frac{p_1p_2\cdots p_r}{p_sp_j}\right)\right]\\
&\geq \phi(n)+\frac{n}{p_1p_2\cdots p_r}\times\left[ \frac{\phi(p_1p_2\cdots p_r)}{p_s}+ \frac{p_1p_2\cdots p_r}{p_j}- p_s\phi\left(\frac{p_1p_2\cdots p_r}{p_sp_j}\right)\right]\\
 & = \phi(n)+\frac{n}{p_1p_2\cdots p_r}\times v,
\end{align*}
where $v= \frac{\phi(p_1p_2\cdots p_r)}{p_s}+ \frac{p_1p_2\cdots p_r}{p_j}- p_s\phi\left(\frac{p_1p_2\cdots p_r}{p_sp_j}\right)$. We have
$$v-u = \frac{\phi(p_1\cdots p_r)}{p_s} + \frac{p_1\cdots p_r}{p_j}- p_s\phi\left(\frac{p_1\cdots p_r}{p_sp_j}\right) -p_1\ldots p_{r-1}+\phi(p_1\ldots p_{r-1}).$$
If $j=r$, then $v-u = \phi\left(\frac{p_1p_2\cdots p_r}{p_rp_s}\right)\times \left[\frac{\phi(p_rp_s)}{p_s} -1 \right] > 0$. Assume that $s=r$. Then
\begin{align*}
v-u & = \frac{p_1p_2\cdots p_r}{p_rp_j}\times (p_r-p_j) - \phi\left(\frac{p_1p_2\cdots p_r}{p_rp_j}\right)\left[p_r - \frac{\phi(p_j p_r)}{p_r}-\phi(p_j) \right]\\
& = (p_r-p_j)\times \left[ \frac{p_1p_2\cdots p_r}{p_rp_j}-\phi\left(\frac{p_1p_2\cdots p_r}{p_rp_j}\right)\right]+\phi\left(\frac{p_1p_2\cdots p_r}{p_rp_j}\right) \times \left(\frac{\phi(p_j p_r)}{p_r}-1\right).
\end{align*}
If $p_j>2$, then $\frac{\phi(p_j p_r)}{p_r}>1 $ and so $v-u > 0$. Suppose that $p_j=2$. Then $j=1$. In this case,
\begin{align*}
v-u & = (p_r-2)\times \left[p_2\cdots p_{r-1}-\phi\left(p_2\cdots p_{r-1}\right)\right]- \frac{1}{p_r}\phi\left(p_2\cdots p_{r-1}\right)\\
 & \geq (p_r-2)\times \phi(p_3\cdots p_{r-1}) - \frac{1}{p_r}\phi\left(p_2\cdots p_{r-1}\right)\\
 & = \phi(p_3\cdots p_{r-1})\times \left[p_r -2 - \frac{p_2 - 1}{p_r}\right] > 0.
\end{align*}
In the above, $\phi(p_3\cdots p_{r-1})$ is considered to be $1$ if $r=3$. Now assume that $s\neq r$ and $j\neq r$. Then an easy calculation gives that
\begin{align*}
v-u & = \frac{p_1\cdots p_r}{p_rp_j}\times (p_r-p_j)+\phi\left(\frac{p_1\cdots p_r}{p_jp_sp_r}\right)\left[\frac{\phi(p_jp_sp_r)}{p_s}-\phi(p_sp_r)-\phi(p_r)+\phi(p_jp_s)\right]\\
& = \frac{p_1\cdots p_r}{p_rp_j}\times (p_r-p_j)+\phi\left(\frac{p_1\cdots p_r}{p_jp_s}\right)\left[\frac{\phi(p_jp_s)}{p_s}-1\right]-\phi\left(\frac{p_1\cdots p_r}{p_jp_r}\right)(p_r-p_j)\\
& =(p_r-p_j)\times \left[\frac{p_1\cdots p_r}{p_jp_r}-\phi\left(\frac{p_1\cdots p_r}{p_jp_r}\right)\right]+\phi\left(\frac{p_1\cdots p_r}{p_jp_s}\right)\left[\frac{\phi(p_jp_s)}{p_s}-1\right].
\end{align*}
If $p_j>2$, then $\frac{\phi(p_jp_s)}{p_s} \geq 1$ and it follows that $v-u >0$. Suppose that $p_j=2$. Then $j=1$ and $p_s \geq 3$. In this case, for $r\geq 4$,
\begin{align*}
v-u &=(p_r- 2)[p_2\cdots p_{r-1}-\phi(p_2\cdots p_{r-1})]-\frac{1}{p_s}\phi\left(\frac{p_2\cdots p_r}{p_s}\right)\\
& = \frac{1}{p_s}\left[p_s(p_r-2)(p_2\cdots p_{r-1}-\phi(p_2\cdots p_{r-1}))-\phi\left(\frac{p_2\cdots p_r}{p_s}\right)\right]\\
& \geq \frac{1}{p_s}\left[p_s(p_r-2)(p_2\cdots p_{r-1}-\phi(p_2\cdots p_{r-1}))-\phi(p_3\cdots p_r)\right]\\
& > \frac{1}{p_s}\left[\phi(p_r)(p_2\cdots p_{r-1}-\phi(p_2\cdots p_{r-1}))-\phi(p_3\cdots p_r)\right]\\
& = \frac{\phi(p_r)}{p_s}\times [p_2\cdots p_{r-1}-p_2\phi(p_3\cdots p_{r-1})] > 0.
\end{align*}
If $r=3$, then $s=2$ and $v-u=p_3 -2-\frac{\phi(p_3)}{p_2}>0$.
In all the cases, we thus have $v>u$ and hence $|X|\geq \phi(n)+\frac{n}{p_1p_2\cdots p_r}\times v > \phi(n)+\frac{n}{p_1p_2\cdots p_r}\times u$, a contradiction to (\ref{eqn-10}).\\

\noindent {\bf Case 2:} $a\geq 2$ and $b\geq 2$.\medskip

We shall apply a similar argument used in Case 2 of Proposition \ref{all}. Suppose that $P=\{E_{\frac{n}{p_{i_1}}},E_{\frac{n}{p_{i_2}}},\ldots,E_{\frac{n}{p_{i_a}}}\}$ and $Q=\{E_{\frac{n}{p_{i_{a+1}}}},E_{\frac{n}{p_{i_{a+2}}}},\ldots,E_{\frac{n}{p_{i_{a+b}}}}\}$, where
$$[r]\setminus\{s\}=\{i_1,\ldots,i_a,i_{a+1},\ldots, i_{a+b}\}.$$
Without loss, we may assume that $p_{i_1} > p_{i_{a+1}}$. Since $A\cup B$ is a separation of $\mathcal{P}(\overline{X})$, the following subgroups
\begin{align*}
& S_{\frac{n}{p_{i_1}p_{i_{a+1}}}},\; S_{\frac{n}{p_{i_1}p_{i_{a+2}}}},\ldots,\; S_{\frac{n}{p_{i_1}p_{i_{a+b}}}}\\
& S_{\frac{n}{p_{i_2}p_{i_{a+1}}}}, \; S_{\frac{n}{p_{i_2}p_{i_{a+2}}}},\ldots,\; S_{\frac{n}{p_{i_2}p_{i_{a+b}}}}\\
& \vdots \\
& S_{\frac{n}{p_{i_a}p_{i_{a+1}}}},\; S_{\frac{n}{p_{i_a}p_{i_{a+2}}}},\ldots,\; S_{\frac{n}{p_{i_a}p_{i_{a+b}}}}
\end{align*}
of $C_n$ must be contained in $X$. Consider the following three subsets of $C_n$:
$$R_1=\underset{k=1}{\overset{b}{\bigcup}} S_{\frac{n}{p_{i_1}p_{i_{a+k}}}},\;\; R_2=\underset{l=2}{\overset{a}{\bigcup}} S_{\frac{n}{p_{i_l}p_{i_{a+1}}}},\;\; R_3=\underset{l=2}{\overset{a}{\bigcup}} S_{\frac{n}{p_{i_{1}}p_{i_l}}}.$$
Note that $R_1$ and $R_2$ are contained in $X$ but $R_3$ need not be. Also, observe that $R_1\cup R_3$ is the union of the $r-2$ subgroups $S_{\frac{n}{p_{i_1}p_j}}$ with $j\in [r]\setminus\{i_1, s\}$. 

In Case 1, if we suppose that $E_{\frac{n}{p_{i_1}}}$ is contained in $A$ and the sets $E_{\frac{n}{p_{j}}}$, $j\in [r]\setminus\{i_1, s\}$, are contained in $B$, then $X$ will contain the sets $E_n$, $E_{\frac{n}{p_{s}}}$, $R_1\cup R_3$ and it would follow that
\begin{equation}\label{eqn-11}
|E_n|+\left\vert E_{\frac{n}{p_{s}}}\right\vert + |R_1\cup R_3| > \phi(n)+\frac{n}{p_1p_2\cdots p_r}\times u=\alpha(n).
\end{equation}
We shall use this estimate to get a contradiction. By Lemma \ref{none-1},
\begin{align*}
|R_2| & =\frac{n}{p_1p_2 \cdots p_r}\times p_sp_{i_1} p_{i_{a+2}}\cdots p_{i_{a+b}}\left[p_{i_{2}}\cdots p_{i_{a}}-\phi(p_{i_{2}}\cdots p_{i_{a}})\right],\\
|R_3| & =\frac{n}{p_1p_2 \cdots p_r}\times p_s p_{i_{a+1}} p_{i_{a+2}}\cdots p_{i_{a+b}}\left[p_{i_{2}}\cdots p_{i_{a}}-\phi(p_{i_{2}}\cdots p_{i_{a}})\right].
\end{align*}
Since $p_{i_1} > p_{i_{a+1}}$ by our assumption, it follows that $|R_2|>|R_3|$. We have
\begin{align*}
R_1\cap R_2 & =\left(\underset{l=2}{\overset{a}{\bigcup}} S_{\frac{n}{p_{i_1}p_{i_l}p_{i_{a+1}}}}\right)\bigcup \left(\underset{l=2}{\overset{a}{\bigcup}} S_{\frac{n}{p_{i_1}p_{i_l}p_{i_{a+1}}p_{i_{a+2}}}}\right)\bigcup \ldots \bigcup \left(\underset{l=2}{\overset{a}{\bigcup}} S_{\frac{n}{p_{i_1}p_{i_l}p_{i_{a+1}}p_{i_{a+b}}}}\right)\\
 & \subseteq \left(\underset{l=2}{\overset{a}{\bigcup}} S_{\frac{n}{p_{i_1}p_{i_l}p_{i_{a+1}}}}\right)\bigcup \left(\underset{l=2}{\overset{a}{\bigcup}} S_{\frac{n}{p_{i_1}p_{i_l}p_{i_{a+2}}}}\right)\bigcup \ldots \bigcup \left(\underset{l=2}{\overset{a}{\bigcup}} S_{\frac{n}{p_{i_1}p_{i_l}p_{i_{a+b}}}}\right)\\
& = R_1\cap R_3.
\end{align*}
Thus $|R_1\cap R_2|\leq |R_1\cap R_3|$ and hence
$$|R_1\cup R_2|=|R_1|+|R_2|-|R_1\cap R_2|> |R_1|+|R_3|-|R_1\cap R_3|= |R_1\cup R_3|.$$
Then $|X| \geq |E_n|+\left\vert E_{\frac{n}{p_s}} \right\vert + |R_1\cup R_2|> \phi(n)+ \phi\left(\frac{n}{p_s}\right) + |R_1\cup R_3|> \alpha(n)$ using (\ref{eqn-11}), which is a contradiction to (\ref{eqn-10}).
\end{proof}

\begin{corollary}\label{one-2}
Let $X$ be a minimum cut-set of $\mathcal{P}(C_n)$. If $X$ contains $E_{\frac{n}{p_s}}$ for exactly one $s\in [r]$, then $n_s\geq 2$.
\end{corollary}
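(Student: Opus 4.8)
The plan is a short argument by contradiction, so suppose $n_s = 1$. First I would fix a separation $A \cup B$ of $\mathcal{P}(\overline{X})$. Since $X$ is a minimum cut-set that contains $E_{\frac{n}{p_s}}$ for exactly one index $s$, Proposition \ref{one-1} applies and tells me that all of the sets $E_{\frac{n}{p_i}}$, $i \in [r]\setminus\{s\}$, lie in the same part of the separation; without loss of generality I will assume they are all contained in $A$ (the other case being symmetric under swapping $A$ and $B$). It then suffices to show that $B = \emptyset$, which contradicts the definition of a separation.

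Next I would take an arbitrary $x \in B$ and write $o(x) = p_1^{l_1}p_2^{l_2}\cdots p_r^{l_r}$ with $0 \le l_i \le n_i$ for each $i$. For each $i \in [r]\setminus\{s\}$, pick $y_i \in E_{\frac{n}{p_i}} \subseteq A$; by the fact recorded just before Proposition \ref{all}, the subgroup $S_{(o(x),\, n/p_i)}$ must be contained in $X$. Comparing prime-power exponents, $\gcd(o(x), n/p_i) = o(x)$ as soon as $l_i < n_i$, and then $x \in S_{o(x)} \subseteq X$, contradicting $x \in \overline{X}$. Hence $l_i = n_i$ for every $i \ne s$, so that $o(x) = n/p_s^{n_s - l_s}$; and since $E_n \subseteq X$ the element $x$ cannot be a generator, forcing $l_s < n_s$.

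Finally, the assumption $n_s = 1$ gives $l_s = 0$, whence $o(x) = n/p_s$ and so $x \in E_{\frac{n}{p_s}} \subseteq X$, once more contradicting $x \in \overline{X}$. Therefore $B = \emptyset$, the contradiction sought, and we conclude $n_s \ge 2$.

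This proof uses no inequalities; the only real input is Proposition \ref{one-1}, and the hard part is already behind us once that result is invoked — everything after it is routine bookkeeping with orders and greatest common divisors. If one were to attempt it without Proposition \ref{one-1}, the genuine obstacle would be ruling out the case where the remaining sets $E_{\frac{n}{p_i}}$ split between $A$ and $B$, which is exactly what that proposition handles.
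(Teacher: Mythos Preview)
Your proof is correct and follows essentially the same approach as the paper: invoke Proposition~\ref{one-1} to place all $E_{n/p_i}$ with $i\neq s$ in one part (say $A$), then argue that any $x\in B$ must have order $n/p_s^{k}$ for some $k$, and use $E_n,E_{n/p_s}\subseteq X$ to force $n_s\ge 2$. The only difference is cosmetic: you spell out the step ``$l_i=n_i$ for all $i\neq s$'' via the $S_{(a,b)}\subseteq X$ fact, whereas the paper simply asserts the resulting order form for elements of $B$; and you phrase the conclusion as a contradiction from $n_s=1$ rather than deducing $n_s\ge 2$ directly. (Incidentally, your detour through $S_{(o(x),n/p_i)}\subseteq X$ is a slightly roundabout way of saying that $o(x)\mid n/p_i$ makes $x$ adjacent to elements of $E_{n/p_i}\subseteq A$, contradicting the separation immediately --- but either phrasing is fine.)
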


\begin{proof}
Let $A\cup B$ be a separation  of $\mathcal{P}(\overline{X})$. By Proposition \ref{one-1}, we may assume that the sets $E_{\frac{n}{p_i}}$, $i\in [r]\setminus \{s\}$, are contained in $A$. Then an arbitrary element in $B$ must have order of the form $\frac{n}{p_s^k}$, where $0\leq k\leq n_s$. The sets $E_n$ and $E_{\frac{n}{p_s}}$ are contained in $X$ and the order of the elements in these two sets correspond to $k=0,1$ respectively. Since $B$ is non-empty, we must have that $n_s\geq 2$.
\end{proof}

\begin{theorem}\label{one-6}
Let $X$ be a minimum cut-set of $\mathcal{P}(C_n)$ containing $E_{\frac{n}{p_s}}$ for a unique $s\in [r]$. Then $s=r$ and $\kappa(\mathcal{P}(C_n))=\beta_r(n)=\beta(n)$.
\end{theorem}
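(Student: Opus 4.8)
The plan is to convert the structural facts from Proposition~\ref{one-1} and Corollary~\ref{one-2} into a lower bound for $|X|$ which, together with the upper bounds (\ref{bound-1}) and (\ref{bound-2}), pins down both $s$ and $\kappa(\mathcal{P}(C_n))$. By Corollary~\ref{one-2} we have $n_s\ge 2$. Fix a separation $A\cup B$ of $\mathcal{P}(\overline{X})$; by Proposition~\ref{one-1} we may assume $E_{\frac{n}{p_i}}\subseteq A$ for all $i\in[r]\setminus\{s\}$. As in the proof of Corollary~\ref{one-2}, every element of $B$ has order of the form $\frac{n}{p_s^k}$: if some $y\in\overline{X}$ had $v_{p_j}(o(y))<n_j$ for a $j\ne s$, then $o(y)\mid\frac{n}{p_j}$, so $y$ would be adjacent to $E_{\frac{n}{p_j}}\subseteq A$ and hence lie in $A$. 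Since $E_n,E_{\frac{n}{p_s}}\subseteq X$ and, by Lemma~\ref{simple}, each $E_{\frac{n}{p_s^k}}$ lies entirely in $X$, in $A$, or in $B$ (sets of equal order being mutually adjacent), we obtain $B=\bigcup_{k\in m}E_{\frac{n}{p_s^k}}$ for some nonempty $m\subseteq\{2,\dots,n_s\}$; set $k_0=\min m$.

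Next I would identify the sets forced into $X$. For each $i\ne s$, choosing $a\in E_{\frac{n}{p_i}}\subseteq A$ and $b\in E_{\frac{n}{p_s^{k_0}}}\subseteq B$ forces $S_{(o(a),o(b))}=S_{\frac{n}{p_ip_s^{k_0}}}\subseteq X$; hence $K:=\bigcup_{i\ne s}S_{\frac{n}{p_ip_s^{k_0}}}\subseteq X$. Also, $E_{\frac{n}{p_s^k}}\subseteq X$ for every $k\in\{0,1,\dots,n_s\}\setminus m$: this holds by hypothesis for $k\in\{0,1\}$, while for $k\ge 2$ with $k\notin m$ the set cannot lie in $B$ (definition of $m$) nor in $A$ (it would be adjacent to $E_{\frac{n}{p_s^{k_0}}}\subseteq B$). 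The set $K$ and these $E_{\frac{n}{p_s^k}}$'s are pairwise disjoint, since the orders occurring in $K$ are never of the form $\frac{n}{p_s^k}$. Writing $c=\frac{n}{p_1\cdots p_r}$, $P=\frac{p_1\cdots p_r}{p_s}$ and $\Phi=\phi(P)$, standard $\phi$-identities and a direct inclusion--exclusion computation give $|K|=\frac{c(P-\Phi)}{p_s^{k_0-1}}$, $\ \sum_{k=0}^{n_s}\phi\!\left(\frac{n}{p_s^k}\right)=c\Phi p_s$, and $|B|\le\sum_{k=k_0}^{n_s}\phi\!\left(\frac{n}{p_s^k}\right)=\frac{c\Phi}{p_s^{k_0-1}}$, so that
\[
|X|\ \ge\ \left(\,\sum_{k=0}^{n_s}\phi\!\left(\tfrac{n}{p_s^k}\right)-|B|\right)+|K|\ \ge\ c\Phi p_s+\frac{c(P-2\Phi)}{p_s^{k_0-1}}\ =:\ g(k_0).
\]
One then checks that $g(n_s)=\beta_s(n)$, that $g$ is strictly decreasing in $k_0$ when $P>2\Phi$ and strictly increasing when $P<2\Phi$, and that $g(2)-\alpha_s(n)=c\,(2\Phi-P)\left(1-\tfrac1{p_s}\right)$; recall $P\ne 2\Phi$ since $r\ge 3$.

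The conclusion follows by a dichotomy on the sign of $P-2\Phi=\frac{p_1\cdots p_r}{p_s}-2\phi\!\left(\frac{p_1\cdots p_r}{p_s}\right)$. If $P>2\Phi$, then $|X|\ge g(k_0)\ge g(n_s)=\beta_s(n)$. Were $s<r$, Lemma~\ref{compa-3} would give $\beta_s(n)>\beta_r(n)=\beta(n)$, contradicting $|X|=\kappa(\mathcal{P}(C_n))\le\beta(n)$; hence $s=r$, and then $\beta_r(n)\le|X|\le\beta(n)=\beta_r(n)$ forces $\kappa(\mathcal{P}(C_n))=|X|=\beta(n)=\beta_r(n)$. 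If instead $P<2\Phi$, then $|X|\ge g(2)>\alpha_s(n)$ (since $g(2)-\alpha_s(n)=c(2\Phi-P)(1-\tfrac1{p_s})>0$); by Lemma~\ref{compa-1} we have $\alpha_s(n)\ge\alpha_r(n)=\alpha(n)$, so $|X|>\alpha(n)$, contradicting $|X|=\kappa(\mathcal{P}(C_n))\le\alpha(n)$. Thus $P<2\Phi$ is impossible, and in every case $s=r$ and $\kappa(\mathcal{P}(C_n))=\beta_r(n)=\beta(n)$.

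The main obstacle I anticipate is the bookkeeping in the middle paragraph: one must carefully distinguish what is forced into the cut-set $X$ from what merely lies on one side of the separation $A\cup B$ (this is exactly where Lemma~\ref{simple} and the clique structure of each $E_{\frac{n}{p_s^k}}$ do the work), must allow $m$ to be an arbitrary nonempty subset of $\{2,\dots,n_s\}$ rather than an interval, and must organize the $\phi$-computations so that the single crude estimate $|X|\ge g(k_0)$ is valid for every admissible $k_0$ and, through its monotonicity, strong enough to undercut $\beta(n)$ or $\alpha(n)$ in the respective cases.
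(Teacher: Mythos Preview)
Your proof is correct and follows essentially the same route as the paper's. The paper indexes by the largest $p_s$-exponent $k$ occurring among orders of elements of $B$ (so your $k_0=n_s-k$), obtains the same lower bound $\mu_k=g(k_0)$ from the same forced sets $E_n,\,E_{n/p_s},\ldots,E_{n/p_s^{k_0-1}}$ and $\bigcup_{i\ne s}S_{n/(p_ip_s^{k_0})}$, and performs the identical dichotomy on the sign of $\frac{p_1\cdots p_r}{p_s}-2\phi\!\left(\frac{p_1\cdots p_r}{p_s}\right)$, invoking Lemma~\ref{compa-3} for $s=r$ in one case and the bound $g(2)>\alpha_s(n)\ge\alpha(n)$ in the other; your extra care in allowing $m$ to be an arbitrary subset (rather than an interval) is harmless since your estimate $|B|\le\sum_{k\ge k_0}\phi(n/p_s^k)$ collapses to the paper's bound anyway.
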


\begin{proof}
Recall that $\beta_s(n)=\phi(n) + \frac{n}{p_1\cdots p_r}\times \frac{1}{p_s^{n_s-1}}\left[\frac{p_1p_2\cdots p_{r}}{p_s} +\phi\left(\frac{p_1p_2\cdots p_{r}}{p_s}\right)\left( p_s^{n_s-1}-2\right) \right]$ and that $|X|= \kappa(\mathcal{P}(C_n))\leq \min \{\alpha(n), \beta_s(n)\}$.

We have $n_s\geq 2$ by Corollary \ref{one-2}. Fix a separation $A\cup B$ of $\mathcal{P}(\overline{X})$. By Proposition \ref{one-1}, we may assume  that all the sets $E_{\frac{n}{p_i}}$, $i\in [r]\setminus\{s\}$, are  contained in $A$. Then any element in $B$ must be of order of the form
$$p_1^{n_1}\cdots p_{s-1}^{n_{s-1}}p_s^tp_{s+1}^{n_{s+1}}\cdots p_r^{n_r}$$
for some $t$ with $0\leq t \leq n_s-2$, as $E_n$ and $E_{\frac{n}{p_s}}$ are contained in $X$. Let $k\in \{0,1,\cdots,n_s -2\}$ be the largest integer for which $B$ has an element of order $p_1^{n_1}\cdots p_{s-1}^{n_{s-1}}p_s^k p_{s+1}^{n_{s+1}}\cdots p_r^{n_r}$. Then the elements of $C_n$ of order $p_1^{n_1}\cdots p_{s-1}^{n_{s-1}}p_s^jp_{s+1}^{n_{s+1}}\cdots p_r^{n_r}$ with $k+1 \leq j\leq n_s$ are in $X$.

Thus the sets $E_n$, $E_{\frac{n}{p_s^{l}}}$ with $1\leq l\leq n_s-k-1$ and the subgroups $S_{\frac{n}{p_s^{n_s -k}p_i}},\;i\in [r]\setminus\{ s \}$, are contained in $X$. We have
\begin{align*}
\left\vert \underset{l=1}{\overset{n_s-k-1}{\bigcup}} E_{\frac{n}{p_s^{l}}}\right\vert = \underset{l=1}{\overset{n_s-k-1}{\sum}} \left\vert  E_{\frac{n}{p_s^{l}}}\right\vert &=\phi\left(\frac{n}{p_s^{n_s}} \right)\left[\phi(p_s^{n_s -1})+\cdots +\phi(p_s^{k+1})\right]\\
& =\phi\left(\frac{n}{p_s^{n_s}} \right)\left[p_s^{n_s -1} - p_s^k \right]\\
& = \frac{n}{p_1\cdots p_r}\times \frac{1}{p_s^{n_s-1}}\times \phi\left(\frac{p_1p_2\cdots p_{r}}{p_s}\right)\left[p_s^{n_s -1} - p_s^k \right].
\end{align*}
Applying a similar argument as in the proof Lemma \ref{none-1}, it can be calculated that
$$\left\vert \underset{i\neq s}{\underset{i=1}{\overset{r}{\bigcup}}} S_{\frac{n}{p_s^{n_s -k}p_i}}\right\vert =\frac{n}{p_1\cdots p_r}\times \frac{1}{p_s^{n_s -k-1}}\times\left[ \frac{p_1\cdots p_r}{p_s}-\phi\left(\frac{p_1\cdots p_r}{p_s}\right)\right]$$
Therefore, $|X|\geq |E_n|+\left\vert \underset{l=1}{\overset{n_s-k-1}{\bigcup}} E_{\frac{n}{p_s^{l}}}\right\vert + \left\vert \underset{i\neq s}{\underset{i=1}{\overset{r}{\bigcup}}} S_{\frac{n}{p_s^{n_s -k}p_i}}\right\vert =\mu_k$, where
$$\mu_k= \phi(n)+ \frac{n}{p_1\cdots p_r}\times \left[\phi\left(\frac{p_1\cdots p_r}{p_s}\right)+\frac{1}{p_s^{n_s -k-1}}\left[\frac{p_1\cdots p_r}{p_s}-2\phi\left(\frac{p_1\cdots p_r}{p_s}\right)\right]\right].$$
Since $r\geq 3$, $\frac{p_1\cdots p_r}{p_s}\neq 2\phi\left(\frac{p_1\cdots p_r}{p_s}\right)$, see \cite[Theorem 1.3(iii)]{cps}. If $\frac{p_1\cdots p_r}{p_s}< 2\phi\left(\frac{p_1\cdots p_r}{p_s}\right)$, then $\mu_k$ is minimum when $k=n_s -2$. In that case,
\begin{align*}
|X|\geq \mu_{n_s -2} &= \phi(n)+\frac{n}{p_1\cdots p_r}\times\left[\phi\left(\frac{p_1\cdots p_r}{p_s}\right)+\frac{1}{p_s}\left[\frac{p_1\cdots p_r}{p_s}-2\phi\left(\frac{p_1\cdots p_r}{p_s}\right)\right]\right]\\
 & > \phi(n)+\frac{n}{p_1\cdots p_r}\times\left[\phi\left(\frac{p_1\cdots p_r}{p_s}\right)+\frac{p_1\cdots p_r}{p_s}-2\phi\left(\frac{p_1\cdots p_r}{p_s}\right)\right]\\
 & = \phi(n)+\frac{n}{p_1\cdots p_r}\times\left[\frac{p_1\cdots p_r}{p_s}-\phi\left(\frac{p_1\cdots p_r}{p_s}\right)\right]\\
& \geq \phi(n)+\frac{n}{p_1\cdots p_r}\times [p_1\cdots p_{r-1}-\phi(p_1\cdots p_{r-1}]=\alpha(n),
\end{align*}
which is a contradiction to that $|X|\leq \alpha(n)$. In the above, the second last inequality follows from Lemma \ref{inequ-3}.\medskip

\noindent Thus $\frac{p_1\cdots p_r}{p_s}> 2\phi\left(\frac{p_1\cdots p_r}{p_s}\right)$. In this case, $\mu_k$ is minimum when $k=0$ and so
\begin{align*}
|X|\geq \mu_0 &= \phi(n)+\frac{n}{p_1\cdots p_r}\times\left[\phi\left(\frac{p_1\cdots p_r}{p_s}\right)+\frac{1}{p_s^{n_s -1}}\left[\frac{p_1\cdots p_r}{p_s}-2\phi\left(\frac{p_1\cdots p_r}{p_s}\right)\right]\right]\\
 & = \phi(n)+\frac{n}{p_1\cdots p_r}\times\frac{1}{p_s^{n_s -1}}\times\left[\frac{p_1\cdots p_r}{p_s}+\phi\left(\frac{p_1\cdots p_r}{p_s}\right)\left(p_s^{n_s -1} -2 \right)\right]\\
 & =\beta_s(n).
\end{align*}
Since $|X|\leq \beta_s(n)$, it follows that $|X|=\beta_s(n)$.

We now claim that $s=r$. Suppose that $s\neq r$. Then $s< r$. Since $\frac{p_1\cdots p_r}{p_s}> 2\phi\left(\frac{p_1\cdots p_r}{p_s}\right)$, Lemma \ref{compa-3} implies that $\beta_s(n) >\beta_r(n)$. Then $\kappa(\mathcal{P}(C_n))=|X|=\beta_s(n) >\beta_r(n)=\beta(n)$, a contradiction to (\ref{bound-2}).
\end{proof}

\begin{proposition}\label{one-atmost}
If $n_r \geq 2$ and $X$ is a minimum cut-set of $\mathcal{P}(C_n)$, then $X$ contains at most one of the sets $E_{\frac{n}{p_1}}, E_{\frac{n}{p_2}},\dots, E_{\frac{n}{p_r}}$.
\end{proposition}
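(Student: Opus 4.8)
The plan is to suppose the contrary and contradict the upper bounds $(\ref{bound-1})$ and $(\ref{bound-2})$. By Proposition~\ref{two-atmost} the cut-set $X$ contains at most two of the sets $E_{n/p_1},\dots,E_{n/p_r}$, so assume it contains exactly two of them, say $E_{n/p_s}$ and $E_{n/p_t}$ with $s<t$. Fix a separation $A\cup B$ of $\mathcal{P}(\overline{X})$. By Lemma~\ref{simple} and its corollary, each $E_{n/p_i}$ with $i\in[r]\setminus\{s,t\}$ lies wholly in $A$ or wholly in $B$; put $P=\{i\in[r]\setminus\{s,t\}:E_{n/p_i}\subseteq A\}$ and $Q=\{i\in[r]\setminus\{s,t\}:E_{n/p_i}\subseteq B\}$. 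Throughout, the idea is to exhibit a union of subgroups of $C_n$ forced to lie in $X$, so that $|X|$ is at least $\phi(n)+\phi(n/p_s)+\phi(n/p_t)$ plus the size of that union, and then to show this lower bound is too large; note that showing $|X|>\alpha(n)$, or $|X|>\beta(n)$, already contradicts $|X|=\kappa(\mathcal{P}(C_n))\le\min\{\alpha(n),\beta(n)\}$.

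If $P\neq\emptyset$ and $Q\neq\emptyset$ (which forces $r\ge4$), then $S_{n/(p_ip_j)}\subseteq X$ for all $i\in P$, $j\in Q$, and after interchanging $A$ and $B$ if needed we may pick $i_1\in P$ with $p_{i_1}$ maximal in $P\cup Q$. Following Case~2 of Proposition~\ref{one-1}, I would set $R_1=\bigcup_{j\in Q}S_{n/(p_{i_1}p_j)}$, $R_2=\bigcup_{l\in P\setminus\{i_1\}}S_{n/(p_lp_{j_0})}$ for a fixed $j_0\in Q$, and $R_3=\bigcup_{l\in P\setminus\{i_1\}}S_{n/(p_{i_1}p_l)}$; then $R_1\cup R_2\subseteq X$, $R_1\cap R_2\subseteq R_1\cap R_3$ and $|R_2|\ge|R_3|$ (since $p_{i_1}>p_{j_0}$), whence $|R_1\cup R_2|\ge|R_1\cup R_3|=\bigl|\bigcup_{j\in[r]\setminus\{i_1,s,t\}}S_{n/(p_{i_1}p_j)}\bigr|$. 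So $|X|\ge\phi(n)+\phi(n/p_s)+\phi(n/p_t)+|R_1\cup R_3|$, and it suffices to prove $\phi(n/p_s)+\phi(n/p_t)+|R_1\cup R_3|>\alpha(n)-\phi(n)$. Evaluating $|R_1\cup R_3|$ by Lemma~\ref{none-1}, bounding $\phi(n/p_s),\phi(n/p_t)$ below by Lemma~\ref{inequ-2}, and using Lemmas~\ref{inequ-1} and~\ref{inequ-3}, this reduces to an elementary inequality that I would verify by splitting into the subcases $p_s>2$ and $p_s=2$ (and, when $p_s=2$, according to whether $i_1=r$), exactly as in Proposition~\ref{one-1}.

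In the remaining case $Q=\emptyset$ (after swapping $A,B$ if needed), so $E_{n/p_i}\subseteq A$ for every $i\in[r]\setminus\{s,t\}$. Non-adjacency of an element of $B$ with the elements of order $n/p_i$ in $A$ forces the $p_i$-exponent of the order of every element of $B$ to equal $n_i$ for each $i\neq s,t$; combining this with $E_n,E_{n/p_s},E_{n/p_t}\subseteq X$ shows every element of $B$ has order $\tfrac{n}{p_s^{a}p_t^{b}}$ for some $(a,b)\in(\{0,\dots,n_s\}\times\{0,\dots,n_t\})\setminus\{(0,0),(1,0),(0,1)\}$. Fixing $y_0\in B$ with $o(y_0)=\tfrac{n}{p_s^{a_0}p_t^{b_0}}$, each $S_{n/(p_ip_s^{a_0}p_t^{b_0})}$ with $i\in[r]\setminus\{s,t\}$ lies in $X$; let $K$ denote their union. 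As $E_n,E_{n/p_s},E_{n/p_t},K$ are pairwise disjoint subsets of $X$, we get $|X|\ge\phi(n)+\phi(n/p_s)+\phi(n/p_t)+|K|$, where an inclusion--exclusion computation (as in Lemma~\ref{none-1}) gives $|K|=\frac{n}{p_1\cdots p_r}\,p_s^{\,1-a_0}p_t^{\,1-b_0}\bigl[\textstyle\prod_{i\neq s,t}p_i-\phi(\prod_{i\neq s,t}p_i)\bigr]$; moreover, if $B$ supports more than one order the bound only improves, since a further such subgroup-union then also lies in $X$. It remains to show this lower bound exceeds $\min\{\alpha(n),\beta(n)\}$, which I would do by distinguishing subcases according to the position of $\{s,t\}$ relative to $\{r-1,r\}$ and the value of $(a_0,b_0)$, bounding $\phi(n/p_s),\phi(n/p_t)$ via Lemmas~\ref{inequ-2} and~\ref{inequ-1-1} and using Lemmas~\ref{inequ-1}, \ref{inequ-3}, the facts (i)--(iii) following $(\ref{bound-2})$ and Proposition~\ref{compa-4}. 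This is where the hypothesis $n_r\ge2$ is indispensable: it guarantees $\beta(n)<\gamma(n)$ and pins $\min\{\alpha(n),\beta(n)\}$ down to a value small enough for the estimate to close (for $n_r=1$ the statement is false, as the cut-set $X_{r-1,r}$ of Subsection~\ref{new-bound} shows). I expect the worst subcase here---$B$ carrying a single order of large deficit, with $p_1=2$---to demand the finest estimates, and this final bookkeeping in the case $Q=\emptyset$ to be the main obstacle of the proof.
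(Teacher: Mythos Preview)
Your strategy is far more elaborate than necessary, and the ``main obstacle'' you anticipate does not exist. The paper's proof never looks at a separation $A\cup B$, never invokes Proposition~\ref{two-atmost}, and never forces any subgroup unions into $X$. It simply observes that if $E_{n/p_k}$ and $E_{n/p_l}$ (with $k<l$) are both contained in $X$, then already
\[
|X|\;\ge\; |E_n|+\Bigl|E_{n/p_k}\Bigr|+\Bigl|E_{n/p_l}\Bigr|\;=\;\phi(n)+\phi\!\left(\tfrac{n}{p_k}\right)+\phi\!\left(\tfrac{n}{p_l}\right),
\]
and this quantity alone exceeds $\beta(n)$, contradicting~(\ref{bound-2}). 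For the two $\phi$-terms one uses Lemma~\ref{inequ-2} (since $k\le r-1$) to get $\phi(n/p_k)\ge \frac{n}{p_1\cdots p_r}\,\phi(p_1\cdots p_{r-1})$, and Lemma~\ref{inequ-1-1} together with the hypothesis $n_r\ge2$ to get $\phi(n/p_l)\ge\phi(n/p_r)=\frac{n}{p_1\cdots p_r}\cdot\frac{\phi(p_1\cdots p_r)}{p_r}$. Subtracting $\beta(n)-\phi(n)$ (after dividing through by $n/(p_1\cdots p_r)$) and simplifying gives
\[
\frac{1}{p_r^{\,n_r-1}}\Bigl[\bigl(\phi(p_r^{\,n_r-1})+2\bigr)\,\phi(p_1\cdots p_{r-1})-p_1\cdots p_{r-1}\Bigr],
\]
which is positive by Lemma~\ref{inequ-1}, since $n_r\ge2$ forces $\phi(p_r^{\,n_r-1})+2\ge p_r+1>r$. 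That is the entire argument.

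So the hypothesis $n_r\ge2$ is not used to control the structure of $B$ or to compare $\beta(n)$ with $\gamma(n)$; it is used only to make the coefficient $\phi(p_r^{\,n_r-1})+2$ large enough for Lemma~\ref{inequ-1} to bite. All the machinery you bring in---the sets $P,Q$, the unions $R_1,R_2,R_3,K$, the case split on $(s,t,a_0,b_0)$---is unnecessary here, even though it is exactly the right toolkit for Propositions~\ref{all} and~\ref{one-1}, where no extra $E_{n/p_i}$ is available in $X$ to pad the count.
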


\begin{proof}
If possible, suppose that the sets $E_{\frac{n}{p_k}}$ and $E_{\frac{n}{p_l}}$ are contained in $X$ for some $k,l\in [r]$ with $k\neq l$. We may assume that $k<l$. Then $k\leq r-1$. Using Lemmas \ref{inequ-1-1} and \ref{inequ-2},
$$\left\vert E_{\frac{n}{p_l}} \right\vert = \phi\left(\frac{n}{p_l}\right)\geq \phi\left(\frac{n}{p_r}\right)\geq \frac{n}{p_1\cdots p_r}\times \frac{\phi\left(p_1\cdots p_{r-1}p_{r} \right)}{p_r}$$
and
$$\left\vert E_{\frac{n}{p_k}} \right\vert = \phi\left(\frac{n}{p_k}\right)\geq p_{r}^{n_{r}-1} \phi\left(\frac{n}{p_{r}^{n_{r}}}\right)=\frac{n}{p_1\cdots p_r}\times \phi\left(p_1\cdots p_{r-2}p_{r-1} \right).$$
We have $|X|=\kappa(\mathcal{P}(C_n))\leq \beta(n)=\phi(n)+ \frac{n}{p_1\cdots p_r}\times u$, where
$$u= \frac{1}{p_r^{n_r-1}}\left[p_1p_2\cdots p_{r-1} +\phi(p_1\cdots p_{r-1})\left( p_r^{n_r-1}-2\right) \right].$$
Using Lemma \ref{inequ-1} and the hypothesis that $n_r\geq 2$, an easy calculation gives that
\begin{align*}
\phi\left(p_1\cdots p_{r-1}\right)+ \frac{\phi\left(p_1\cdots p_{r} \right)}{p_r} - u & = \frac{1}{p_r^{n_r-1}}\left[\left(\phi\left(p_r^{n_r -1}\right)+2\right)\phi\left(p_1\cdots p_{r-1}\right) - p_1\cdots p_{r-1} \right]\\
 & > \frac{1}{p_r^{n_r-1}}\left[\left(r+1\right)\phi\left(p_1\cdots p_{r-1}\right) - p_1\cdots p_{r-1} \right]\\
 & > 0.
\end{align*}
Therefore $\phi\left(p_1\cdots p_{r-1}\right)+ \frac{\phi\left(p_1\cdots p_{r} \right)}{p_r} > u$. Since $E_n, E_{\frac{n}{p_k}}, E_{\frac{n}{p_l}}$ are pairwise disjoint and contained in $X$, we get
\begin{align*}
|X| & \geq \left\vert E_n\right\vert + \left\vert E_{\frac{n}{p_k}}\right\vert + \left\vert E_{\frac{n}{p_l}}\right\vert\\
 & = \phi(n)+ \frac{n}{p_1\cdots p_r}\times\left[\phi\left(p_1\cdots p_{r-1}\right)+ \frac{\phi\left(p_1\cdots p_{r} \right)}{p_r} \right]\\
 & > \phi(n)+ \frac{n}{p_1\cdots p_r}\times u= \beta(n),
\end{align*}
a contradiction to that $|X|=\kappa(\mathcal{P}(C_n))\leq \beta(n)$.
\end{proof}

\begin{proof}[{\bf Proof of Theorem \ref{res-2}}]
Since $n_r\geq 2$, $X$ contains at most one of the sets $E_{\frac{n}{p_1}},\dots, E_{\frac{n}{p_r}}$ by Proposition \ref{one-atmost}. If $X$ does not contain any of these sets, then Proposition \ref{all} implies that $\kappa(\mathcal{P}(C_n))=\alpha(n)$. If $X$ contains $E_{\frac{n}{p_s}}$ for a unique $s\in [r]$, then Proposition \ref{one-6} gives that $s=r$ and $\kappa(\mathcal{P}(C_n))=\beta_r(n)=\beta(n)$. It follows that $\kappa(\mathcal{P}(C_n)) = \min \{\alpha(n), \beta(n)\}$, thus proving the theorem.
\end{proof}

\section{Proof of Theorem \ref{res-3}}

Let $n=p_1p_2\cdots p_r$, where $r\geq 3$ and $p_1,p_2,\ldots,p_r$ are prime numbers with $p_1<p_2<\cdots <p_r$. In this case,
\begin{enumerate}
\item[] $\alpha(n)= \phi(n) + p_1p_2\cdots p_{r-1}-\phi(p_1p_2\cdots p_{r-1})$,
\item[] $\gamma(n)= \phi(n)+\phi(p_1\cdots p_{r-1})+ \phi(p_1\cdots p_{r-2}p_r)+p_1\cdots p_{r-2}-\phi(p_1\cdots p_{r-2})$.
\end{enumerate}
Recall that the set $X_{r-1,r} =E_n\bigcup E_{\frac{n}{p_{r-1}}}\bigcup E_{\frac{n}{p_{r}}}\bigcup \left(\underset{i=1}{\overset{r-2}\bigcup} S_{\frac{n}{p_ip_{r-1}p_r}}\right)$ defined in Section \ref{new-bound} is a cut-set of $\mathcal{P}(C_n)$ which gives the upper bound $\gamma_{r-1,r}(n)=\gamma(n)$ for $\kappa(\mathcal{P}(C_n))$. 
Now, let $X$ be a minimum cut-set of $\mathcal{P}(C_n)$. Then $|X|=\kappa(\mathcal{P}(C_n))\leq \min\{\alpha(n),\gamma(n)\}$. 

\begin{proposition}\label{two-0}
$X$ contains none or exactly two of the sets $E_{\frac{n}{p_1}}, E_{\frac{n}{p_2}},\dots, E_{\frac{n}{p_r}}$.
\end{proposition}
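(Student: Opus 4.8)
The plan is to obtain this statement as an immediate specialisation of the results already proved in the $n_r\geq 2$ analysis, now applied in the squarefree case. First I would invoke Proposition \ref{two-atmost}: since $r\geq 3$, the minimum cut-set $X$ contains at most two of the sets $E_{\frac{n}{p_1}}, E_{\frac{n}{p_2}},\dots, E_{\frac{n}{p_r}}$. Thus the only thing that needs to be ruled out is the possibility that $X$ contains exactly one of these sets.

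So suppose, for contradiction, that $X$ contains $E_{\frac{n}{p_s}}$ for exactly one $s\in [r]$. Corollary \ref{one-2} then forces $n_s\geq 2$. But here $n=p_1p_2\cdots p_r$ is a product of distinct primes, so $n_s=1$ for every $s\in [r]$ — a contradiction. Hence the number of sets among $E_{\frac{n}{p_1}},\dots,E_{\frac{n}{p_r}}$ contained in $X$ is neither $1$ nor greater than $2$, i.e.\ it is $0$ or $2$, which is exactly the claim.

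I do not expect a genuine obstacle in writing this up: the argument is a two-line consequence of Proposition \ref{two-atmost} and Corollary \ref{one-2}. The substantive work has already been done upstream — the new bound $\kappa(\mathcal{P}(C_n))\leq \gamma(n)$ from Theorem \ref{res-1} feeding Proposition \ref{two-atmost}, and the counting estimate behind Proposition \ref{one-1}/Corollary \ref{one-2}. The only point worth a sentence of care is to confirm that both cited results are stated for arbitrary positive exponents with $r\geq 3$, so that they apply verbatim when all exponents equal $1$; they are, so no extra hypothesis is needed.
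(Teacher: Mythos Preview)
Your proposal is correct and matches the paper's own proof essentially verbatim: the paper simply states that the result follows from Proposition~\ref{two-atmost} and Corollary~\ref{one-2}. Your additional remark checking that both cited results apply when all exponents equal $1$ is a reasonable point of care but requires no modification to the argument.
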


\begin{proof}
This follows from Proposition \ref{two-atmost} and Corollary \ref{one-2}.
\end{proof}

\begin{proposition}\label{two-1}
If $X$ contains none of the sets $E_{\frac{n}{p_1}},\dots, E_{\frac{n}{p_r}}$, then $\kappa(\mathcal{P}(C_n))=\alpha(n)$.
\end{proposition}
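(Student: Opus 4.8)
The plan is to deduce Proposition~\ref{two-1} directly from Proposition~\ref{all}. The integer $n=p_1p_2\cdots p_r$ with $r\geq 3$ is just the special case $n_1=n_2=\cdots=n_r=1$ of the general $n$ fixed throughout this section, so Proposition~\ref{all} applies verbatim: its hypothesis is exactly that $X$ is a minimum cut-set of $\mathcal{P}(C_n)$ containing none of the sets $E_{\frac{n}{p_1}},\dots,E_{\frac{n}{p_r}}$, which is what we are assuming here. Hence $X=Y_r$, and since $|Y_r|=\alpha_r(n)=\alpha(n)$ we conclude $\kappa(\mathcal{P}(C_n))=|X|=\alpha(n)$. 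The write-up is therefore a one-line citation of Proposition~\ref{all}.

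For completeness I would recall the mechanism being invoked, since it is the only ingredient. First fix a separation $A\cup B$ of $\mathcal{P}(\overline{X})$. By Lemma~\ref{simple} together with the hypothesis, each set $E_{\frac{n}{p_i}}$ lies entirely in $A$ or entirely in $B$; let $a$ and $b$ count how many lie in $A$ and in $B$, so $a+b=r$ with $a,b\geq 1$ (both $A,B$ are non-empty and $E_n\subseteq X$). If $a=1$ or $b=1$, say $E_{\frac{n}{p_k}}$ is the lone set on one side, then the subgroups $S_{\frac{n}{p_kp_j}}$ for $j\in[r]\setminus\{k\}$ are forced into $X$; their union $L$ has the size computed in Lemma~\ref{none-1}, and Lemma~\ref{inequ-3} (using $r\geq 3$) gives $|E_n|+|L|>\alpha(n)$ unless $k=r$, which would contradict~(\ref{bound-1}). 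So $k=r$, whence $Y_r\subseteq X$, and minimality of $X$ forces $X=Y_r$. If instead $a,b\geq 2$, one compares the unions $K_1,K_2,K_3$ of the forced subgroups and again shows $|X|$ exceeds $\alpha(n)$, contradicting~(\ref{bound-1}); so this case does not occur.

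The only thing to verify — and the reason the reduction is legitimate — is that nowhere in the proof of Proposition~\ref{all} is the exponent hypothesis $n_j\geq 2$ used: that argument manipulates only the sets $E_n$, $E_{\frac{n}{p_i}}$ and the subgroups $S_{\frac{n}{p_ip_j}}$, all of which are defined for every $n$. Consequently there is no genuine obstacle here; the content of Proposition~\ref{two-1} has already been established in Proposition~\ref{all}, and the squarefree case is simply an instance of it.
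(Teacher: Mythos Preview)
Your proposal is correct and matches the paper's own proof exactly: the paper's proof of Proposition~\ref{two-1} is the single sentence ``This follows from Proposition~\ref{all}.'' Your additional recap of the mechanism of Proposition~\ref{all} is accurate and harmless, but not needed.
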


\begin{proof}
This follows from Proposition \ref{all}.
\end{proof}

\begin{proposition}\label{two-2}
If $X$ contains exactly two of the sets $E_{\frac{n}{p_1}}$, $E_{\frac{n}{p_2}},\dots, E_{\frac{n}{p_r}}$, then the following hold:
\begin{enumerate}
\item[(i)] $E_{\frac{n}{p_{r-1}}}$ and $E_{\frac{n}{p_r}}$ are contained in $X$.

\item[(ii)] $X=X_{r-1,r}$ and so $\kappa(\mathcal{P}(C_n))=\gamma_{r-1,r}(n)=\gamma(n)$.
\end{enumerate}
\end{proposition}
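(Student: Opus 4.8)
The plan is to fix a separation $A\cup B$ of $\mathcal{P}(\overline{X})$ and write $E_{\frac{n}{p_j}},E_{\frac{n}{p_k}}$ (with $j<k$) for the two sets among $E_{\frac{n}{p_1}},\dots,E_{\frac{n}{p_r}}$ that lie in $X$. Since exactly these two lie in $X$, for every $i\in[r]\setminus\{j,k\}$ the set $E_{\frac{n}{p_i}}$ lies entirely in $A$ or entirely in $B$ by the corollary to Lemma~\ref{simple}; let $\mathcal{A},\mathcal{B}$ be the corresponding index sets, so $\mathcal{A}\sqcup\mathcal{B}=[r]\setminus\{j,k\}$. I would then split the argument according to whether one of $\mathcal{A},\mathcal{B}$ is empty.

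If one of them, say $\mathcal{B}$, is empty (after possibly swapping $A$ and $B$), then every $E_{\frac{n}{p_m}}$ with $m\notin\{j,k\}$ lies in $A$. First I would show $B=E_{\frac{n}{p_jp_k}}$: for $x\in B$, if $p_m\nmid o(x)$ for some $m\notin\{j,k\}$ then $o(x)\mid\frac{n}{p_m}$ and $x$ is adjacent to $E_{\frac{n}{p_m}}\subseteq A$, which is impossible; hence $\frac{n}{p_jp_k}\mid o(x)$, and as $E_n,E_{\frac{n}{p_j}},E_{\frac{n}{p_k}}\subseteq X$ this forces $o(x)=\frac{n}{p_jp_k}$, so $E_{\frac{n}{p_jp_k}}\subseteq B$ by the corollary to Lemma~\ref{simple}. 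Next, since every element whose order is a proper divisor of $\frac{n}{p_jp_k}$ is adjacent to $E_{\frac{n}{p_jp_k}}=B$ and cannot lie in $B$, all such elements lie in $X$; that is, $S_{\frac{n}{p_jp_k}}\setminus E_{\frac{n}{p_jp_k}}\subseteq X$. Thus $X$ contains the pairwise disjoint sets $E_n$, $E_{\frac{n}{p_j}}$, $E_{\frac{n}{p_k}}$, $S_{\frac{n}{p_jp_k}}\setminus E_{\frac{n}{p_jp_k}}$, giving
$$|X|\ge\phi(n)+\phi\!\left(\frac{n}{p_j}\right)+\phi\!\left(\frac{n}{p_k}\right)+\left[\frac{n}{p_jp_k}-\phi\!\left(\frac{n}{p_jp_k}\right)\right].$$
By Lemma~\ref{inequ-1-1} (with all exponents $1$, so strictly) $\phi(n/p_j)+\phi(n/p_k)\ge\phi(n/p_{r-1})+\phi(n/p_r)$ with equality only for $\{j,k\}=\{r-1,r\}$, and by Lemma~\ref{inequ-3} the bracket is $\ge\frac{n}{p_{r-1}p_r}-\phi(\frac{n}{p_{r-1}p_r})$; hence the right side is $\ge\gamma(n)$. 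Since $|X|\le\gamma(n)$, equality holds throughout: $\{j,k\}=\{r-1,r\}$, which is (i); $|X|=\gamma(n)$; and $X=E_n\cup E_{\frac{n}{p_{r-1}}}\cup E_{\frac{n}{p_r}}\cup\bigl(S_{\frac{n}{p_{r-1}p_r}}\setminus E_{\frac{n}{p_{r-1}p_r}}\bigr)$. As $S_{\frac{n}{p_{r-1}p_r}}\setminus E_{\frac{n}{p_{r-1}p_r}}=\bigcup_{i=1}^{r-2}S_{\frac{n}{p_ip_{r-1}p_r}}$, this set equals $X_{r-1,r}$, which is (ii).

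If instead both $\mathcal{A},\mathcal{B}$ are nonempty (so $r\ge4$), I would show that $X$ cannot be a minimum cut-set, in the spirit of Case~2 of Propositions~\ref{all} and~\ref{one-1}. Every subgroup $S_{\frac{n}{p_ip_{i'}}}$ with $i\in\mathcal{A}$, $i'\in\mathcal{B}$ lies in $X$; let $W$ be their union. Since $E_n,E_{\frac{n}{p_j}},E_{\frac{n}{p_k}}\subseteq X$ are disjoint from $W$ and from one another, $|X|\ge\phi(n)+\phi(n/p_j)+\phi(n/p_k)+|W|$. An inclusion--exclusion count as in Lemma~\ref{none-1} gives the closed form $|W|=p_jp_k\bigl(m_{\mathcal{A}}-\phi(m_{\mathcal{A}})\bigr)\bigl(m_{\mathcal{B}}-\phi(m_{\mathcal{B}})\bigr)$, where $m_{\mathcal{A}}=\prod_{i\in\mathcal{A}}p_i$ and $m_{\mathcal{B}}=\prod_{i\in\mathcal{B}}p_i$. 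It then suffices to verify $\phi(n/p_j)+\phi(n/p_k)+|W|>\gamma(n)-\phi(n)$ in all cases, contradicting $|X|\le\gamma(n)$.

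The hard part is this last verification, which is a two-parameter estimate over $\{j,k\}$ and over the split $\mathcal{A}\sqcup\mathcal{B}$. The tight configuration is $\{j,k\}=\{r-1,r\}$ with one of $\mathcal{A},\mathcal{B}$ a singleton; there $m_{\mathcal{A}}m_{\mathcal{B}}=\frac{n}{p_{r-1}p_r}$ and, after simplification, the inequality reduces to
$$\frac{m_{\mathcal{A}}}{m_{\mathcal{A}}-\phi(m_{\mathcal{A}})}+\frac{m_{\mathcal{B}}}{m_{\mathcal{B}}-\phi(m_{\mathcal{B}})}<p_{r-1}p_r+1,$$
which holds because $\mathcal{A},\mathcal{B}\subseteq\{1,\dots,r-2\}$ gives $\phi(m)/m\le1-\frac{1}{p_{r-2}}$, so each summand is at most $p_{r-2}$, and $2p_{r-2}<p_{r-1}p_r$. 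For every other $\{j,k\}$ the strict gain $\phi(n/p_j)+\phi(n/p_k)>\phi(n/p_{r-1})+\phi(n/p_r)$ from Lemma~\ref{inequ-1-1} outweighs the change in $|W|$ relative to $\frac{n}{p_{r-1}p_r}-\phi(\frac{n}{p_{r-1}p_r})$ (using $|W|\ge p_jp_k$ together with Lemmas~\ref{inequ-1} and~\ref{inequ-3}), so these cases are dominated by the one above. Once Case~2 is excluded, Case~1 yields both (i) and (ii).
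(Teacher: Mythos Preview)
Your Case~1 (one of $\mathcal A,\mathcal B$ empty) is correct and is essentially the paper's Case~2 of part (i) together with its wrap-up of part (ii), but organised more efficiently: by carrying all $\{j,k\}$ at once and using the strictness in Lemmas~\ref{inequ-1-1} and~\ref{inequ-3}, you get (i) and (ii) simultaneously from the single equality $|X|=\gamma(n)$. This is a genuine streamlining over the paper, which first pins down $\{j,k\}=\{r-1,r\}$ and then repeats the case-split to prove (ii).

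Your Case~2 (both $\mathcal A,\mathcal B$ nonempty), however, is overworked, and its final subcase is only a sketch. The closed form $|W|=p_jp_k\bigl(m_{\mathcal A}-\phi(m_{\mathcal A})\bigr)\bigl(m_{\mathcal B}-\phi(m_{\mathcal B})\bigr)$ is correct, and your treatment of the tight configuration $\{j,k\}=\{r-1,r\}$ is fine. But the sentence handling ``every other $\{j,k\}$'' is not a proof: saying the gain from Lemma~\ref{inequ-1-1} ``outweighs the change in $|W|$'' via $|W|\ge p_jp_k$ and Lemmas~\ref{inequ-1},~\ref{inequ-3} does not by itself establish the required inequality, because $p_jp_k$ can be much smaller than $\frac{n}{p_{r-1}p_r}-\phi\bigl(\frac{n}{p_{r-1}p_r}\bigr)$ and you have not made explicit how the $\phi$-gain compensates.

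The paper dispatches all of Case~2 with a single observation that avoids your multi-parameter estimate entirely: $W$ contains some subgroup $S_{\frac{n}{p_sp_t}}$ with $s\in\mathcal A$, $t\in\mathcal B$, and since $\{s,t\}\subseteq[r]$ are distinct, $\bigl|S_{\frac{n}{p_sp_t}}\bigr|=\frac{n}{p_sp_t}\ge\frac{n}{p_{r-1}p_r}$. Combined with $\phi(n/p_j)>\phi(n/p_{r-1})$ and $\phi(n/p_k)\ge\phi(n/p_r)>\phi(n/p_r)-\phi\bigl(\frac{n}{p_{r-1}p_r}\bigr)$ when $\{j,k\}\ne\{r-1,r\}$ (and with the strict inequality $\frac{n}{p_sp_t}>\frac{n}{p_{r-1}p_r}$ when $\{j,k\}=\{r-1,r\}$, since then $s,t\le r-2$), this immediately gives $|X|>\gamma(n)$ in every instance of Case~2. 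Replacing your exact computation of $|W|$ by this one-subgroup lower bound would complete your argument with no further work.
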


\begin{proof}
(i) Let $E_{\frac{n}{p_j}}$ and $E_{\frac{n}{p_k}}$ be the two sets which are contained in $X$, where $1\leq j\neq k\leq r$.
If possible, suppose that $\{j,k\}\neq \{r-1,r\}$. Without loss, we may assume that $j < k$. Then $j\leq r-2$. By Lemma \ref{inequ-1-1},
\begin{equation}\label{eqn-5}
\left\vert E_{\frac{n}{p_j}}\right\vert = \phi\left(\frac{n}{p_j}\right)> \phi\left(\frac{n}{p_{r-1}}\right)
\end{equation}
and
\begin{equation}\label{eqn-6}
\left\vert E_{\frac{n}{p_k}}\right\vert = \phi\left(\frac{n}{p_k}\right)\geq \phi\left(\frac{n}{p_{r}}\right)> \phi\left(\frac{n}{p_{r}}\right) - \phi\left(\frac{n}{p_{r-1}p_{r}}\right).
\end{equation}
Now fix a separation $A\cup B$ of $\mathcal{P}(\overline{X})$. We consider two cases.\\

\noindent {\bf Case 1:} Each of $A$ and $B$ contains at least one of the sets $E_{\frac{n}{p_i}}$, $i\in [r]\setminus \{j,k\}$.\medskip

This possibility occurs only when $r\geq 4$. Suppose that $A$ contains $E_{\frac{n}{p_s}}$ and $B$ contains $E_{\frac{n}{p_t}}$ for some $s,t\in [r]\setminus \{j,k\}$ with $s\neq t$. Then the subgroup $S_{\frac{n}{p_s p_t}}$ is contained in $X$. We have
\begin{equation}\label{eqn-7}
\left\vert S_{\frac{n}{p_s p_t}}\right\vert = \frac{n}{p_sp_t} \geq  \frac{n}{p_{r-1}p_r}.
\end{equation}
Since the sets $E_n, E_{\frac{n}{p_j}}$, $E_{\frac{n}{p_k}}$ and $S_{\frac{n}{p_s p_t}}$ are pairwise disjoint and contained in $X$, we get using (\ref{eqn-5}), (\ref{eqn-6}) and (\ref{eqn-7}) that
\begin{align*}
|X| & \geq \left\vert E_n\right\vert+\left\vert E_{\frac{n}{p_j}}\right\vert +\left\vert E_{\frac{n}{p_k}}\right\vert +\left\vert S_{\frac{n}{p_s p_t}} \right\vert \\
 & > \phi(n)+\phi\left(\frac{n}{p_r}\right) + \phi\left(\frac{n}{p_{r-1}}\right)+ \frac{n}{p_{r-1}p_r} -\phi\left( \frac{n}{p_{r-1}p_r}\right) = \gamma(n),
\end{align*}
which is a contradiction to that $|X|\leq \gamma(n)$.\\

\noindent {\bf Case 2:} All the sets $E_{\frac{n}{p_i}}$, $i\in [r]\setminus \{j,k\}$, are contained either in $A$ or in $B$.\medskip

Without loss, we may assume that $B$ contains all the sets $E_{\frac{n}{p_i}}$, $i\in [r]\setminus \{j,k\}$. Since $A\cup B$ is a separation of $\mathcal{P}(\overline{X})$, the order of each element of $A$ must be divisible by $p_i$ for every $i\in [r]\setminus \{j,k\}$. Since $E_n, E_{\frac{n}{p_j}}$ and $E_{\frac{n}{p_k}}$ are contained in $X$, it follows that the order of every element of $A$ is equal to $\frac{n}{p_j p_k}$, thus giving $A=E_{\frac{n}{p_j p_k}}$. Then each of the subgroups $S_{\frac{n}{p_ip_jp_k}}$, $i\in [r]\setminus \{j,k\}$, of $C_n$ is contained in $X$. Let $R$ be the union of these $r-2$ subgroups.
By Lemmas \ref{none-1} and \ref{inequ-3}, we get
\begin{equation}\label{eqn-8}
|R| = \frac{n}{p_j p_k}-\phi\left(\frac{n}{p_j p_k}\right)> \frac{n}{p_{r-1} p_r}-\phi\left(\frac{n}{p_{r-1} p_r}\right),
\end{equation}
as $\{j,k\}\neq \{r-1,r\}$. Since the sets $E_n, E_{\frac{n}{p_j}}$, $E_{\frac{n}{p_k}}$ and $R$ are pairwise disjoint and contained in $X$, we get using (\ref{eqn-5}), (\ref{eqn-6}) and (\ref{eqn-8}) that
\begin{align*}
|X| & \geq \left\vert E_n\right\vert+\left\vert E_{\frac{n}{p_j}}\right\vert +\left\vert E_{\frac{n}{p_k}}\right\vert +\left\vert R \right\vert \\
 & > \phi(n)+\phi\left(\frac{n}{p_r}\right) + \phi\left(\frac{n}{p_{r-1}}\right)+ \frac{n}{p_{r-1}p_r} -\phi\left( \frac{n}{p_{r-1}p_r}\right) = \gamma(n),
\end{align*}
which is a contradiction to that $|X|\leq \gamma(n)$. This completes the proof of (i).\\

(ii) By (i), $E_{\frac{n}{p_{r-1}}}$ and $E_{\frac{n}{p_r}}$ are contained in $X$. Fix a separation $A\cup B$ of $\mathcal{P}(\overline{X})$. If $E_{\frac{n}{p_s}}\subseteq A$ and $E_{\frac{n}{p_t}}\subseteq B$ for some $s,t\in [r-2]$ with $s\neq t$ (possible only when $r\geq 4$), then
the subgroup $S_{\frac{n}{p_s p_t}}$ is contained in $X$. We have
\begin{equation}\label{eqn-9}
\left\vert S_{\frac{n}{p_s p_t}}\right\vert = \frac{n}{p_sp_t} >  \frac{n}{p_{r-1}p_r}> \frac{n}{p_{r-1}p_r} -\phi\left( \frac{n}{p_{r-1}p_r}\right).
\end{equation}
Since the sets $E_n, E_{\frac{n}{p_r}}, E_{\frac{n}{p_{r-1}}}$ and $S_{\frac{n}{p_s p_t}}$ are pairwise disjoint and contained in $X$, we get using (\ref{eqn-9}) that
\begin{align*}
|X| & \geq \left\vert E_n\right\vert +\left\vert E_{\frac{n}{p_r}}\right\vert +\left\vert E_{\frac{n}{p_{r-1}}}\right\vert +\left\vert S_{\frac{n}{p_s p_t}} \right\vert \\
 & > \phi(n)+\phi\left(\frac{n}{p_r}\right) + \phi\left(\frac{n}{p_{r-1}}\right)+ \frac{n}{p_{r-1}p_r} -\phi\left( \frac{n}{p_{r-1}p_r}\right) = \gamma(n),
\end{align*}
which is a contradiction to that $|X|\leq \gamma(n)$. Therefore, all the sets $E_{\frac{n}{p_i}}$, $1\leq i\leq r-2$, are contained either in $A$ or in $B$.

Without loss, we may assume that all these $r-2$ sets are contained in $B$. Applying the argument as in the proof of Case 2 of (i), we get that $A=E_{\frac{n}{p_{r-1}p_r}}$ and that each of the subgroups $S_{\frac{n}{p_ip_{r-1}p_r}}$, $i\in [r-2]$, is contained in $X$. Thus $X$ contains the cut-set $X_{r-1,r}$. Since $X$ is a minimum cut-set of $\mathcal{P}(C_n)$, it follows that $X=X_{r-1,r}$ and hence $\kappa(\mathcal{P}(C_n))= |X|=\left\vert X_{r-1,r}\right\vert=\gamma(n)$.
\end{proof}

Now, we can see that Theorem \ref{res-3} follows from Propositions \ref{two-0}, \ref{two-1} and \ref{two-2}. Note that the comparison between $\alpha(n)$ and $\gamma(n)$ is given in Lemma \ref{compa-4}(ii).

\vskip .5cm

\noindent{\bf Addresses}:\\

\noindent Sriparna Chattopadhyay, Kamal Lochan Patra, Binod Kumar Sahoo\\

\noindent 1) School of Mathematical Sciences\\
National Institute of Science Education and Research (NISER), Bhubaneswar\\
P.O.- Jatni, District- Khurda, Odisha - 752050, India\medskip

\noindent 2) Homi Bhabha National Institute (HBNI)\\
Training School Complex, Anushakti Nagar\\
Mumbai - 400094, India\medskip

\noindent E-mails: sriparna@niser.ac.in, klpatra@niser.ac.in, bksahoo@niser.ac.in\\

\begin{thebibliography}{99}
\bibitem{survey} J. Abawajy, A. Kelarev and M. Chowdhury, Power graphs: a survey, Electron. J. Graph Theory Appl. 1 (2013), 125--147.
\bibitem{BIS} D. Bubboloni, Mohammad A. Iranmanesh and S. M. Shaker, On some graphs associated with the finite alternating groups, Comm. Algebra 45 (2017), 5355--5373.
\bibitem{BIS-1} D. Bubboloni, Mohammad A. Iranmanesh, and S. M. Shaker, Quotient graphs for power graphs, Rend. Semin. Mat. Univ. Padova 138 (2017), 61--89.
\bibitem{cam} P. J. Cameron, The power graph of a finite group, II, J. Group Theory 13 (2010), 779--783.
\bibitem{cam-1} P. J. Cameron and S. Ghosh, The power graph of a finite group, Discrete Math. 311 (2011), 1220--1222.
\bibitem{ivy} I. Chakrabarty, S. Ghosh and M. K. Sen, Undirected power graphs of semigroups, Semigroup Forum 78 (2009), 410--426.
\bibitem{sri} S. Chattopadhyay and P. Panigrahi, Connectivity and planarity of power graphs of finite cyclic, dihedral and dicyclic groups, Algebra Discrete Math. 18 (2014), 42--49.
\bibitem{CP-2015} S. Chattopadhyay and P. Panigrahi, On Laplacian spectrum of power graphs of finite cyclic and dihedral groups, Linear Multilinear Algebra 63 (2015), 1345--1355.
\bibitem{cps} S. Chattopadhyay, K. L. Patra and B. K. Sahoo, Vertex connectivity of the power graph of a finite cyclic group, Discrete Appl. Math., to appear, https://doi.org/10.1016/j.dam.2018.06.001.
\bibitem{cps-1} S. Chattopadhyay, K. L. Patra and B. K. Sahoo, Minimal cut-sets in the power graph of certain finite non-cyclic groups, communicated, https://arxiv.org/abs/1802.07646.
\bibitem{cur} B. Curtin and G. R. Pourgholi, Edge-maximality of power graphs of finite cyclic groups, J. Algebraic Combin. 40 (2014), 313--330.
\bibitem{cur-1} B. Curtin and G. R. Pourgholi, An Euler totient sum inequality, J. Number Theory 163 (2016), 101--113.
\bibitem{dooser} A. Doostabadi, A. Erfanian and A. Jafarzadeh, Some results on the power graphs of finite groups, ScienceAsia 41 (2015), 73--78.
\bibitem{doos} A. Doostabadi and M. Farrokhi D. Ghouchan, On the connectivity of proper power graphs of finite groups, Comm. Algebra 43 (2015), 4305--4319.
\bibitem{FMW} M. Feng, X. Ma and K. Wang, The structure and metric dimension of the power graph of a finite group, European J. Combin. 43 (2015), 82--97.
\bibitem{FMW-1} M. Feng, X. Ma and K. Wang, The full automorphism group of the power (di)graph of a finite group, European J. Combin. 52 (2016), 197--206.
\bibitem{ker1} A. V. Kelarev and S. J. Quinn, A combinatorial property and power graphs of groups, in Contributions to general algebra, 12 (Vienna, 1999), 229--235, Heyn, Klagenfurt, 2000.
\bibitem{ker1-1} A. V. Kelarev, S. J. Quinn and R. Smol\'{\i}kov\'{a}, Power graphs and semigroups of matrices, Bull. Austral. Math. Soc. 63 (2001), 341--344.
\bibitem{ker2} A. V. Kelarev and S. J. Quinn, Directed graphs and combinatorial properties of semigroups, J. Algebra 251 (2002), 16--26.
\bibitem{mir} M. Mirzargar, A. R. Ashrafi and M. J. Nadjafi-Arani, On the power graph of a finite group, Filomat 26 (2012), 1201--1208.
\bibitem{mog} A. R. Moghaddamfar, S. Rahbariyan and W. J. Shi, Certain properties of the power graph associated with a finite group, J. Algebra Appl. 13 (2014), no. 7, 1450040, 18 pp.
\bibitem{panda} R. P. Panda and K. V. Krishna, On connectedness of power graphs of finite groups, J. Algebra Appl. 17 (2018), no. 10, 1850184, 20 pp.
\end{thebibliography}
\end{document}